\documentclass[preprint]{imsart}

\RequirePackage[OT1]{fontenc}

\arxiv{arXiv:1501.04751}

\usepackage[english]{babel}
 \usepackage{amsrefs}
\usepackage{amsmath,amsfonts,amsthm,amscd,amssymb,mathrsfs}
\usepackage[colorlinks=true,citecolor=blue,urlcolor=blue]{hyperref}
\usepackage{lipsum}
\usepackage{amsmath}
\usepackage{amsfonts}
\usepackage{amsthm}
\usepackage{amssymb}
\usepackage{cases}
\usepackage{bbm}
\usepackage{graphicx}
\usepackage{subfig}
\usepackage{float}
\usepackage{empheq}
\usepackage{mathrsfs}
\usepackage{enumerate}
\usepackage{appendix}
\usepackage{times}
\usepackage{microtype}
\usepackage{mathrsfs}
\usepackage{tikz}

\usepackage{tikz}
\usetikzlibrary{arrows,chains,matrix,positioning,scopes}

\usetikzlibrary{shapes.misc}
\usetikzlibrary{shapes.symbols}
\tikzset{
	xi/.style={circle,fill=blue!10,draw=black,inner sep=0pt,minimum size=1.2mm},
	xib/.style={circle,fill=blue!10,draw=black,inner sep=0pt,minimum size=1.6mm},
	not/.style={circle,fill=black,draw=black,inner sep=0pt,minimum size=0.5mm},
	>=stealth,
	}
\makeatletter
\def\DeclareSymbol#1#2#3{\expandafter\gdef\csname MH@symb@#1\endcsname{\tikz[baseline=#2,scale=0.15]{#3}}}
\def\<#1>{\csname MH@symb@#1\endcsname}

\makeatother

\DeclareSymbol{tree1}{0.5}{\draw (-1,1) node[not] {};}
\DeclareSymbol{tree12}{0.5}{\draw (-0.5,1) node[not] {} -- (0,0.25) node[not]{}; \draw (0.5,1) node[not] {} -- (0,0.25);}
\DeclareSymbol{tree122}{0.5}{\draw (-0.5,0.75) node[not] {} -- (0,0)node[not] {}; \draw (0.5,0.75) -- (0,0); \draw (0,1.5) node[not] {} -- (0.5,0.75)node[not] {}; \draw (1,1.5) node[not] {} -- (0.5,0.75);}
\DeclareSymbol{tree11}{0.5}{\draw (0,1.5) node[not] {} -- (0,0.5); }
\DeclareSymbol{tree21}{0.5}{\draw (0,1.5) node[not] {} -- (0,0.5); \draw (0.5,0.5) node[not] {};}
\DeclareSymbol{tree124}{0.5}{\draw (-0.5,0.75) node[not] {}-- (0,0)node[not] {}; \draw (0.5,0.75)node[not] {} -- (0,0); \draw (0.25,1.5) node[not] {} -- (0.5,0.75); \draw (1,1.5) node[not] {} -- (0.5,0.75); \draw (-1,1.5) node[not] {} -- (-0.5,0.75); \draw (-0.25,1.5) node[not] {} -- (-0.5,0.75);}
\DeclareSymbol{tree1222}{0.5}{\draw (-0.5,0.75) node[not] {} -- (0,0) node[not] {}; \draw (0.5,0.75) -- (0,0); \draw (0,1.5) node[not] {} -- (0.5,0.75)node[not] {}; \draw (1,1.5) -- (0.5,0.75); \draw (0.5,2.25) node[not] {} -- (1,1.5)node[not] {}; \draw (1.5,2.25) node[not] {} -- (1,1.5);}

 

\startlocaldefs

\newcommand{\eqdef}{\stackrel{\mathclap{\mbox{\tiny def}}}{=}}

\DeclareMathOperator{\prob}{{\mathbb P}}

\makeatletter
\newcommand{\eqcolon}{\mathrel{\mathord{=}\raise.2\p@\hbox{:}}}
\newcommand{\coloneq}{\mathrel{\raise.2\p@\hbox{:}\mathord{=}}}
\makeatother

\newcommand{\dd}{\mathrm{d}}

\newcommand{\1}{\mathbbm{1}}
\newcommand{\E}{\mathbbm{E}}
\newcommand{\loc}{\mathrm{loc}}


\newcommand{\Q}{\mathbb Q}
\newcommand{\R}{\mathbb R}

\newcommand{\Z}{\mathbb Z}
\newcommand{\XX}{\mathbb{X}}

\newcommand{\TT}{\mathbb{T}}


\newcommand{\D}{\mathcal{D}}
\newcommand{\CF}{\mathcal{F}}
\newcommand{\CG}{\mathcal{G}}
\newcommand{\CH}{\mathcal{H}}
\newcommand{\I}{\mathcal I}
\newcommand{\CJ}{\mathcal{J}}
\newcommand{\CK}{\mathcal{K}}
\newcommand{\CS}{\mathcal{S}}

\newcommand{\CV}{\mathcal{V}}
\newcommand{\XR}{\mathcal{X}}

\newcommand{\cb}{\mathscr{B}}

\newcommand{\CC}{\mathscr{C}}

\newcommand{\DD}{\mathscr{D}}

\newcommand{\cF}{\mathscr F}
\newcommand{\cg}{\mathscr G}
\newcommand{\ch}{\mathscr H}

\newcommand{\cs}{\mathscr S}

\newcommand{\cX}{\mathscr{X}}


\newcommand{\eps}{\varepsilon}
\newcommand{\der}{\delta}


\theoremstyle{plain}
\newtheorem{lemma}{Lemma}[section]
\newtheorem{theorem}[lemma]{Theorem}
\newtheorem{corollary}[lemma]{Corollary}
\newtheorem{proposition}[lemma]{Proposition}

\theoremstyle{definition}
\newtheorem{definition}[lemma]{Definition}
\newtheorem{remark}[lemma]{Remark}

\newtheorem*{notation*}{Notation}
\newtheorem*{plan*}{Plan of the paper}
\newtheorem*{ackno*}{Acknowledgements}

\numberwithin{equation}{section}

\endlocaldefs

\begin{document}

\begin{frontmatter}
\title{Multidimensional SDEs with singular drift \\and universal construction of the polymer measure with white noise potential}

\begin{aug}
\author{\fnms{Giuseppe} \snm{Cannizzaro}\thanksref{t1}\ead[label=e1]{giuse.cannizzaro@gmail.com}}
\and
\author{\fnms{Khalil} \snm{Chouk}\thanksref{}\ead[label=e2]{khalilchouk@gmail.com}}

\thankstext{t1}{This work was carried out while G.C. was a PhD student at Technische Universit\"at Berlin}

\affiliation{University of Warwick\thanksmark{m1} and Technische Universit\"at Berlin\thanksmark{m2}}


\end{aug}

\begin{abstract}
We study existence and uniqueness of solution for stochastic differential equations with distributional drift by giving a meaning to the Stroock-Varadhan martingale problem associated to such equations. The approach we exploit is the one of paracontrolled distributions introduced in~\cite{GIP15}. As a result we make sense of the three dimensional polymer measure with white noise potential.
\end{abstract}

\begin{keyword}[class=MSC]
\kwd[Primary ]{60K35}
\kwd{60K35}
\kwd[; secondary ]{60K35}
\end{keyword}

\begin{keyword}
\kwd{SDEs with Rough Drift}
\kwd{Singular SPDEs}
\kwd{Paracontrolled Calculus}
\kwd{Polymer Measure}
\end{keyword}

\end{frontmatter}


{\tableofcontents}

\section{Introduction}

The aim of the present paper is to give a meaning to Stochastic Differential Equations (SDEs) of the form
\begin{equation}\label{e:SDE}
\dd X_t=V(t,X_t)\dd t+\dd B_t,\qquad X_0=x
\end{equation}
where $B$ is a $d$-dimensional Brownian motion, $x$ a point in $\R^d$ and $V$ is a function of time taking values in the space of distributions $\cs'(\R^d,\R^d)$. 
Of course, as it is written,~\eqref{e:SDE} does not make any sense unless we impose certain restrictions concerning the regularity or integrability (or both) of the drift $V$. 

The case of $V$ being a smooth enough vector-field has been deeply investigated and is nowadays well-understood. Upon assuming $V\in L_{\loc}^p((0,+\infty)\times \R^d)$ for $p>d+2$, it is still possible to obtain local pathwise existence and uniqueness as shown in~\cite{KR05}. 
When $V$ is an effective distribution, the majority of results deals with the time-homogenous situation (i.e. $V$ is taken to be independent of time), see for example~\cite{BC01,FRW03,FRW04}, and existence and uniqueness can be determined either in the weak or strong sense, depending on the interplay between its regularity and integrability. 

When $V\in C([0,T],\cs'(\R^d,\R^d))$ with a non-trivial dependence on time, the picture becomes even more blurred, since it is already unclear how to define a convenient notion of solution. 
Nevertheless some advances have been recently made in~\cite{FFF}, where the authors investigate the case of a time dependent distributional drift taking values in a class of Sobolev spaces with negative derivation order on $\R^d$. 
\newline

Our attempt is to generalize the work of F. Delarue and  R. Diel. In~\cite{FR}, they construct solutions to SDEs with $V(t,\cdot)=\partial_xY(t,\cdot)$ and $Y$ a  $(1/3+\eps)$-H\"older function in space on some interval $I\subseteq \R$, by formulating a Stroock-Varadhan martingale problem for~\eqref{e:SDE}. What we aim at is to go beyond the one dimensional case and consider a distributional drift on $\R^d$ for $d\geq1$. More precisely we study the case of $V\in C([0,T], \CC^\beta(\R^d,\R^d))$ for $\beta<0$, where $\CC^\beta(\R^d,\R^d)$ is the Besov-H\"older space of distributions on $\R^d$ (see~\eqref{eq:Besov-space} for the exact definition). 

In the same spirit as~\cite{FR}, we prove well-posedness for the martingale problem corresponding to the generator $\mathscr G^V$ of the diffusion~\eqref{e:SDE}, which is given by
\begin{equation}
\cg^V=\partial_t+\frac{1}{2}\Delta+V\cdot\nabla.
\end{equation}
In general, one would want to say that a probability measure $\prob $  on $\Omega=C([0,T],\mathbb R^d)$, endowed with the usual Borel $\sigma$-algebra $\mathscr{B}(C([0,T],\R^d))$, solves the martingale problem related to $\mathscr G^V$ starting at $x$, if the canonical process $X$, $X_t(\omega)=\omega(t)$, satisfies 
\begin{enumerate}
\item $\prob(X_0=x)=1$,
\item  for any $T^\star\leq T$ and  $\varphi\in\D$, where $\D$ is a set of functions on $[0,T^\star]\times \R^d$, the process 
\begin{equation}\label{eq:sv-martingale1}
\Big\{\varphi(t,X_t)-\int_0^t(\mathscr G^V\varphi)(s,X_s)\dd s\Big\}_{0\leq t\leq T^\star}
\end{equation}
is a square integrable martingale with respect to $\prob$.
\end{enumerate}
The problem here lies in the fact that if we choose $\D$ simply as the space of smooth functions and $V\in C([0,T], \CC^\beta(\R^d,\R^d))$, with $\beta<0$, then $\mathscr G^V\varphi$ is not a function anymore but a distribution (with the same regularity as $V$) and, once again, it is not clear what meaning to attribute to $(\mathscr G^V\varphi)(s,X_s)$. 
The point here is that we need to determine a suitable domain $\D$ for which $\mathscr G^V\varphi$ is a continuous function of time, bounded in space. 
In other words, we need to solve the following Partial Differential Equation (PDE), that we will refer to as the {\it generator equation},
\begin{equation}\label{eq:generator11}
\mathscr G^V\varphi=f,\qquad\varphi(T,\cdot)=\varphi^T
\end{equation}
for $f\in C([0,T],L^\infty(\R^d))$ and a sufficiently large class of terminal conditions $\varphi^T$. 
Once this is done, we can replace the assertion~\eqref{eq:sv-martingale1} with the requirement that the process 
\begin{equation}\label{e:MartingaleNew}
\Big\{\varphi(t,X_t)-\int_0^tf(s,X_s)\dd s\Big\}_t
\end{equation}
is a square integrable martingale for every $f\in C([0,T],L^\infty(\R^d))$ and $\varphi$ the solution of~\eqref{eq:generator11}. 

However, PDEs of the type~\eqref{eq:generator11}, assuming $\beta\in(-\frac{2}{3},0)$, cannot be classically handled since the presumed solution is not expected to be smooth enough to allow to define the ill-posed term $V\cdot\nabla \varphi$. 
To bypass it, F. Delarue and R. Diel in~\cite{FR} adopt the technique exploited by M.Hairer in~\cite{hairer_solving_2013} and, more precisely, they make use of Lyons Rough Path theory to interpret the ill-defined product as a rough integral.

Despite the possibility of overcoming the well-posedness issues, rough path theory has the dramatic disadvantage of being crucially attached to the one parameter setting so that there is simply no hope to go beyond the one-dimensional case with those techniques.

This is precisely the point in which the paracontrolled distributions approach, developed in~\cite{GIP15} (or alternatively the Theory of Regularity Structures~\cite{hairer_theory_2013}), comes into play. In this context though, the possibility of solving equations that are not classically well-posed comes at a ``price". 
More specifically, in case $\beta\in(-\frac{2}{3},-\frac{1}{2}]$, we are not allowed to take {\it any} $V\in C([0,T],\CC^\beta(\R^d,\R^d))$ but only those that can be {\it enhanced} to a rough distribution, $\CV$ (see Definition~\ref{def:rough}). In other words, we need to be able to build in some way, starting from $V$, an additional object satisfying suitable regularity requirements but depending {\it only} on $V$ itself. 

We refrain from detailing the construction here and we limit ourselves to loosely state the result. 

\begin{theorem}\label{t:GenEq}
Let $\beta\in(-\frac{2}{3},0)$, $\gamma\in(0,\beta+2)$ and $V\in C([0,T],\CC^\beta(\R^d,\R^d))$. If  $\beta\in(-\frac{2}{3},-\frac{1}{2}]$, assume further that $V$ can be enhanced to a rough distribution $\CV$. Then, there exists a non-trivial Banach space, $\D\subseteq C([0,T], \CC^\gamma(\R^d))$, such that for any $\varphi^T\in\CC^\gamma(\R^d)$ and $f\in C([0,T],L^\infty(\R^d))$,~\eqref{eq:generator11} admits a unique solution in $\D$. Moreover, the map assigning to $\varphi^T$, $f$ and $\CV$ the solution to the generator equation is jointly locally Lipschitz continuous. 
\end{theorem}

If we now formulate the Stroock-Varadhan martingale problem for the SDE~\eqref{e:SDE}, by requiring point 1. stated before and~\eqref{e:MartingaleNew} to be a square integrable martingale for every $f\in C([0,T],L^\infty(\R^d))$, with $\varphi\in\D$ and $\D$ the Banach space determined in the previous theorem, then we can indeed prove its well-posedness. 

\begin{theorem}\label{t:MartProb}
Let $\beta\in(-\frac{2}{3},0)$ and $V\in C([0,T],\CC^\beta(\R^d,\R^d))$. If  $\beta\in(-\frac{2}{3},-\frac{1}{2}]$, assume further that $V$ can be enhanced to a rough distribution $\CV$. 
Then, there exists a unique probability measure $\prob$ which solves the martingale problem with generator $\mathscr G^V$ starting at $x$ (as described above), for every $x\in\R^d$. 
\end{theorem}

The natural question at this point is if and when it is possible to build, given $V\in C([0,T],\CC^\beta(\R^d,\R^d))$, its enhancement $\CV$. 
The examples are various (for $d=1$, the ones described in~\cite[Section 5]{FR} would do) but probably one of the most interesting cases is the one that allows to construct the $2$ and $3$ dimensional polymer measure with white noise potential. 

The Polymer measure with white noise potential is a singular measure on the space of continuous functions that is formally given by
\begin{equation}\label{eq:Polymer1}
\mathbb Q_T(\dd\omega)=Z_0^{-1}\exp\left(\int_0^T\xi(\omega_s)\dd s\right)\mathbb W_T(\dd\omega)
\end{equation}
where $\mathbb W$ is the Wiener measure on $C([0,T],\R^d)$, $d=2,3$, $\xi$ a spatial white noise on the d-dimensional torus $\mathbb T^d$ independent of $\mathbb W$, and $Z_0$ is an infinite renormalization constant. 

As it is written, the expression in~\eqref{eq:Polymer1} is of course senseless since we are exponentiating the integral in time of a white noise, which is a distribution, over a Brownian path and dividing then by an infinite constant, all operations that require to be given a meaning to. 

Even if seemingly unrelated, we will see that, {\it if} it were well-posed, under the polymer measure the canonical process, $X_t(\omega)=\omega_t$ has the same law as the solution to the SDE given by
\begin{equation}\label{eq:SDEPol}
\dd X_t=\nabla h(T-t,X_t)\dd t + \dd B_t
\end{equation}
where $B$ is a brownian motion with respect to $\mathbb W$ and $h$, the solution to the KPZ-type equation
\begin{equation}\label{e:KPZType1}
\partial_t h=\frac{1}{2}\Delta h+\frac{1}{2}|\nabla h|^2+\xi,\,\qquad h(0,\cdot)=0
\end{equation}
in which $\xi$ is the same space white noise as the one appearing in~\eqref{eq:Polymer1}. 
Summarizing, if we are able to describe the law of~\eqref{eq:SDEPol} then we can also give a quenched description of the infinitesimal dynamics of the polymer itself, in other words, make sense of it. 

It is not difficult to guess, from the KPZ-type equation above, that $\nabla h$ has regularity slightly less than $0$ in dimension $2$ and slightly less than $-\frac{1}{2}$ in dimension $3$ thus, in principle, falling into the scope of Theorem~\ref{t:MartProb}. 
But of course to be able to apply it, we will need to prove well-posedness of~\eqref{e:KPZType1}, which is non-trivial given the singularity of the noise, and, for this, we will exploit once more the paracontrolled distribution approach. 

Once local existence and uniqueness for the previous Stochastic Partial Differential Equation (SPDE) is established and one has shown that, in $d=3$, $V(t,\cdot)\eqdef \nabla h(T-t,\cdot)$ can be enhanced to a rough distribution, we obtain the following result. 

\begin{theorem}\label{t:ConstrPol}
Let $\xi_\eps$ be a mollified version of the noise and $\mathbb Q_T^\eps$ the polymer measure defined in~\eqref{eq:Polymer1} with $\xi_\eps$ replacing $\xi$. Then, there exists a measure $\mathbb Q_T$ and $T^\star=T^\star(\xi)>0$, independent of the choice of the mollifier, such that for all $T< T^\star$, $\mathbb Q^\eps_T\Longrightarrow\mathbb Q_T$.
\end{theorem}

The last part of our work will consist in determining some of the properties of the Polymer Measure built in the previous theorem. 
At first notice that, the construction above is {\it local} in the sense that we can prove that the measure formally given in~\eqref{eq:Polymer1} exists only up to a possibly finite explosion time $T^\star$, depending, in principle, on the features of the noise. 
We want to show that such an explosion does not occur. Our proof relies on the strict positivity of the solution to the Parabolic Anderson Equation (PAM), formally given by
\[
\partial_t u=\frac{1}{2}\Delta u +u \xi
\]
with initial condition identically equal to 1 and we provide a novel proof of this in Section~\ref{section:pam} valid for both $d=2$ and $3$. 

At last, looking at the way in which the Polymer measure~\eqref{eq:Polymer1} is written, it might seem that $\mathbb Q_T$ is absolutely continuous with respect to the Wiener one. This is definitely not the case. 
In principle, since $\mathbb Q_T$ is the measure describing the law of the solution to~\eqref{eq:SDEPol}, looking at the SDE one guesses (correctly) that the drift cannot be of Cameron-Martin type. 

The actual proof does not make use of the previous heuristics but instead focuses on the renormalization properties of~\eqref{e:KPZType1} so that in the end we have the following statement.

\begin{theorem}\label{thm:PropPol}
In the assumptions of Theorem~\ref{t:ConstrPol}, let $T^\star$ and $\mathbb Q_T$ be as stated above. 
Then, in both dimensions $d=2$ and $3$, $T^\star$ can be chosen to be $+\infty$ and the measure $\mathbb Q_T$ is singular with respect to the Wiener one.  
\end{theorem}

As a last remark, we point out that the construction of the Polymer measure we carried out before is rather universal in the sense that it does not rely on the specific features of the noise. Indeed, given that we are able to prove well-posedness of an equation of the type~\eqref{e:KPZType1} driven by a generic noise $\tilde \xi$ then the same arguments apply. 

The same holds true for the proof of the singularity. For the continuous directed random polymer, i.e. the one formally given by the expression~\eqref{eq:Polymer1}, but with a space-time white noise in spatial dimension 1, an analogous result was obtained in~\cite{AKQ14}. 
Our proof follows a completely different approach, which in turn can be straightforwardly adapted to recover their result.

\begin{plan*} 
In Section~\ref{sec:FunctionSpaces}, we introduce Besov spaces and the main elements of paracontrolled calculus that will be needed in the rest of the paper. Section~\ref{sec:generatorequation} is dedicated to the generator equation. 
We prove that it admits a unique solution and that the flow is a locally Lipschitz map. As anticipated in the introduction, this is then crucial for Section~\ref{sec:MartProblem}, in which we define the Martingale problem associated to the SDE~\eqref{e:SDE} and prove its well-posedness. 
The last three sections are devoted to the Polymer measure: its construction (Section~\ref{sec:Polymer}), the KPZ-type equation thanks to which it is possible (Section~\ref{section:KPZ}) and its properties (Section~\ref{sec:PropPol}). 
\end{plan*}

\begin{ackno*}
We want to thank Professors P.K.Friz and M.Gubinelli for the numerous discussions and fruitful advices. While this work was developed GC was funded by the RTG 1845 and Berlin Mathematical School. KC is supported by  European Research Council under
the European Union's Seventh Framework Programme (FP7/2007-2013) / ERC grant
agreement nr. 258237.
\end{ackno*}

\begin{notation*}
We collect here some notations we will use throughout the paper. In the present work we will always consider functions/distributions on $\R^d$, $d\geq 1$ arbitrary but fixed, with values in $\R^n$, so, in order to lighten the notations, if $B(\R^d,\R^n)$ is a space of functions/distributions from $\R^d$ to $\R^n$, then we will denote by
\[
B_{\R^n}\eqdef B(\R^d,\R^n)\,,\qquad\text{and}\qquad B\eqdef B(\R^d,\R)
\]
Let $\delta\geq 0$, $\eta\in\R$, $T>0$ and $\bar T\in[0,T)$. Let $(D,\,\|\cdot\|_D)$ be a Banach space and $\zeta,\,\bar \zeta:[T-\bar T,T]\to B$ be two functions. We will say that $\zeta\in C^\delta_{\eta,\bar T, T}D$ and $\bar\zeta\in C_{\eta,\bar T,T} D$ if $\|\zeta\|_{C^{\delta}_{\eta,\bar T,T}D}<\infty$ and $\|\bar\zeta\|_{C_{\eta,\bar T, T}D}<\infty$ respectively, where
\small\begin{align*}
\|\zeta\|_{C^{\delta}_{\eta,\bar T, T}D}&\eqdef \sup_{s<t\in(T-\bar T,T]}(T-t)^{\frac{\eta}{2}}\frac{\|f(t)-f(s)\|_D}{|t-s|^\delta}\,,\\
\|\bar\zeta\|_{C_{\eta,\bar T, T}D}&\eqdef \sup_{t\in(T-\bar T,T]}(T-t)^{\frac{\eta}{2}}\|f(t)\|_D\,.
\end{align*}\normalsize
In case the norm on $\zeta$ does not depend on $\eta$, i.e. $\eta=0$, or $\bar T=T$, we will simply remove the corresponding subscript. 
\newline

We will say that $a\lesssim b$ if there exists a constant $C>0$ such that $a\leq C b$. 
\end{notation*}

\section{Besov Spaces and Paracontrolled Calculus}\label{sec:FunctionSpaces}

In this first paragraph we want to introduce the definition of the function spaces we will be using throughout the rest of the work and recall the main ingredients of the paracontrolled calculus\footnote{For a thorough introduction on Besov Spaces see~\cite{BCD11}, or~\cite{GIP15} for the main definitions and properties we will use from now on.} . 

Let $\chi,\, \varrho\in \DD$ be non-negative radial functions such that
\begin{enumerate}
	\item The support of $\chi$ is contained in a ball and the support of $\varrho$ is contained in an annulus;
	\item $\chi(\xi)+\sum_{j\ge0}\varrho(2^{-j}\xi)=1$ for all $\xi\in\R^d$;
	\item $\text{supp}(\chi) \cap \text{supp}(\varrho(2^{-j}.)) = \emptyset$ for $i \geq 1$ and $\text{supp}(\varrho(2^{-i}.)) \cap \text{supp}(\varrho(2^{-j}.)) = \emptyset$ when $|i-j| > 1$.
\end{enumerate}
$(\chi,\varrho)$ satisfying the above properties are said to form a dyadic partition of unity. For the existence of a dyadic partition of unity see \cite[Proposition 2.10]{BCD11}.  

Let now $\cF$ denote the Fourier transform and $(\chi,\varrho)$ be a dyadic partition of unity. Then, the Littlewood-Paley blocks are defined as
\[
\Delta_{-1} u = \cF^{-1}(\chi \cF u),\qquad  \Delta_j u = \cF^{-1}(\varrho_j(\cdot)\cF u)\quad \mathrm{for}\ j \ge 0
\]
where $\varrho_j(\cdot)\eqdef \varrho(2^{-j}\cdot)$ and, for $\alpha\in\R$, $p,\,q\in[1,+\infty]$, the Besov space $B_{p,q}^\alpha(\R^d,\R^n)$ is
\small\begin{equation}\label{eq:Besov-space}
B_{p,q}^{\alpha}(\R^d,\R^n)=\Big\{u\in \cs'(\R^d,\R^n); \quad \|u\|^q_{B_{p,q}^{\alpha}}=\sum_{j\geq-1}2^{jq\alpha}\|\Delta_ju\|^q_{L^p(\R^d,\R^n	)}<+\infty\Big\}.
\end{equation}\normalsize
We will often deal with the special case $p=q=\infty$, so we set $\CC^{\alpha}(\R^d,\R^n)\eqdef B_{\infty,\infty}^{\alpha}(\R^d,\R^n)$ and denote by $\|u\|_{\alpha}=\|u\|_{B_{\infty,\infty}^{\alpha}}$ its norm. 
Such a notation is also justified by the fact that, for non-integer $\alpha>0$, $\CC^\alpha(\R^d,\R^n)$ coincides with the usual space of $\alpha$-H\"older continuous functions. 

In order to manipulate stochastic terms and exploit properties of the elements in Wiener chaos, we will bound their norm in Besov spaces with finite $p=q$ and then get back to the space $\CC^\alpha$. 
To do so, the following Besov embedding will prove to be fundamental.
\begin{proposition}\label{proposition:Bes-emb} 
Let $1\leq p_1\leq p_2\leq +\infty$ and $1\leq q_1\leq q_2\leq +\infty$. For all $s\in \R$ the space $B_{p_1,q_1}^{s}$ is continuously embedded in $B_{p_2,q_2}^{s-d(\frac{1}{p_1}-\frac{1}{p_2})}$, in particular we have $\|u\|_{\alpha-\frac{d}{p}}\lesssim\|u\|_{B_{p,p}^{\alpha}}$.
 \end{proposition}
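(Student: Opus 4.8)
The plan is to reduce the statement to two classical ingredients: Bernstein's inequality for Fourier-localised functions, and the elementary nesting $\ell^{q_1}\hookrightarrow\ell^{q_2}$ valid for $q_1\le q_2$. Everything else is bookkeeping with the Littlewood--Paley decomposition.

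First I would invoke Bernstein's lemma (see \cite{BCD-bk}, Lemma 2.1): if $v\in\mathscr S'(\R^d,\R^n)$ has Fourier transform supported in a ball $\{|\xi|\le\lambda\}$, then for every $1\le p_1\le p_2\le\infty$,
$$\|v\|_{L^{p_2}}\lesssim\lambda^{d(\frac1{p_1}-\frac1{p_2})}\|v\|_{L^{p_1}},$$
with the implicit constant depending only on $d,p_1,p_2$. This is the only genuinely analytic step; it follows by writing $v=\psi_\lambda\ast v$ with $\psi$ a Schwartz function equal to $1$ on $\{|\xi|\le1\}$ and $\psi_\lambda=\lambda^d\psi(\lambda\,\cdot)$, then applying Young's convolution inequality $\|\psi_\lambda\ast v\|_{L^{p_2}}\le\|\psi_\lambda\|_{L^r}\|v\|_{L^{p_1}}$ with $\tfrac1r=1-(\tfrac1{p_1}-\tfrac1{p_2})\in[0,1]$, and noting $\|\psi_\lambda\|_{L^r}=\lambda^{d(1-1/r)}\|\psi\|_{L^r}=\lambda^{d(1/p_1-1/p_2)}\|\psi\|_{L^r}$.

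Next I would apply this to each Littlewood--Paley block $v=\Delta_j u$. By the construction recalled above, $\mathscr F(\Delta_j u)$ is supported in a ball of radius $\sim 2^j$ for $j\ge0$ and in a fixed ball for $j=-1$, so in all cases
$$\|\Delta_j u\|_{L^{p_2}}\lesssim 2^{jd(\frac1{p_1}-\frac1{p_2})}\|\Delta_j u\|_{L^{p_1}},\qquad j\ge-1.$$
Multiplying by $2^{j(s-d(1/p_1-1/p_2))}$ gives $2^{j(s-d(1/p_1-1/p_2))}\|\Delta_j u\|_{L^{p_2}}\lesssim 2^{js}\|\Delta_j u\|_{L^{p_1}}$ for each $j$. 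Taking the $\ell^{q_2}$ norm over $j\ge-1$ and using $\|(a_j)\|_{\ell^{q_2}}\le\|(a_j)\|_{\ell^{q_1}}$ (valid since $q_1\le q_2$), we obtain
$$\|u\|_{\mathscr B^{s-d(1/p_1-1/p_2)}_{p_2,q_2}}\lesssim\|u\|_{\mathscr B^{s}_{p_1,q_1}},$$
which is precisely the asserted continuous embedding. The particular case $\|u\|_{\alpha-d/p}\lesssim\|u\|_{\mathscr B^\alpha_{p,p}}$ follows on taking $p_1=p$, $p_2=\infty$, $q_1=p$, $q_2=\infty$, $s=\alpha$ and recalling $\CC^{\alpha-d/p}=\mathscr B^{\alpha-d/p}_{\infty,\infty}$.

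I do not anticipate any real obstacle here: the sole nontrivial input is Bernstein's inequality, which is entirely standard and available in \cite{BCD-bk}, while the remainder is just the monotonicity of the $\ell^q$-norms in $q$ combined with the definition \eqref{eq:Besov-space}. The one point deserving a line of care is the low-frequency block $j=-1$, whose spectral support is a ball rather than an annulus; but since the form of Bernstein used above is already stated for balls, it needs no separate treatment.
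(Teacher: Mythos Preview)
Your proof is correct and is precisely the standard argument: Bernstein's inequality applied blockwise, followed by the $\ell^{q_1}\hookrightarrow\ell^{q_2}$ inclusion. The paper does not supply its own proof of this proposition; it explicitly defers to \cite{BCD-bk} (see the opening sentence of Section~\ref{sec:besov}), and the argument you give is exactly the one found there (Proposition~2.20 and Lemma~2.1 of \cite{BCD-bk}).
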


\subsection{Operations with Besov-H\"older distributions}

Let $f,\,g$ be two distributions in $\cs'(\R^d)$. Upon using the Littlewood-Paley decomposition of $f$ and $g$, we can formally write their product as  
$$
fg=f\prec g+f\circ g+f\succ g
$$
where the first and the last summand at the right hand side are called {\it paraproducts} while the second {\it resonant} term, and they are respectively defined by
$$
f\prec g=g\succ f=\sum_{j\geq-1}\sum_{i<j-1}\Delta_if\Delta_jg\quad\text{and}\quad f\circ g=\sum_{j\geq-1}\sum_{|i-j|\leq 1}\Delta_if\Delta_jg.
$$
With these notations at hand, we can state the following proposition.

\begin{proposition}[Bony's Estimates,~\cite{BCD11}]\label{prop:bony}   
Let $\alpha$, $\beta\in\R$. Let $f\in\CC^\alpha$ and $g\in\CC^\beta$, 
\begin{itemize}
\item if $\alpha\geq0$, then $f\prec g \in\CC^\beta$ and  $\|f\prec g\|_{\beta}\lesssim \|f\|_{L^\infty}\|g\|_\beta$
\item if $\alpha<0$, then $f\prec g \in\CC^{\alpha+\beta}$ and  $\|f\prec g\|_{\alpha+\beta}\lesssim \|f\|_{\alpha}\|g\|_\beta$
\item if $\alpha+\beta>0$, then $f\circ g \in\CC^{\alpha+\beta}$ and $\|f\circ g\|_{\alpha+\beta}\lesssim \|f\|_{\alpha}\|g\|_\beta$
\end{itemize}
\end{proposition}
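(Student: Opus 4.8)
The plan is to run the standard Littlewood--Paley proof of Bony's estimates, whose only ingredients are Bernstein's inequality, the definition of the $\CC^\alpha$ norm, and two ``summation lemmas'' for spectrally localised series (both in~\cite{BCD-bk}): if $(u_j)_{j\geq-1}\subset L^\infty(\R^d)$ with $M:=\sup_j 2^{j\gamma}\|u_j\|_{L^\infty}<\infty$, then $\sum_j u_j$ converges in $\mathscr S'$ to some $u\in\CC^\gamma$ with $\|u\|_\gamma\lesssim M$, provided \emph{either} each $\cF u_j$ is supported in a dyadic annulus $2^j\mathcal A$ (then $\gamma\in\R$ is arbitrary), \emph{or} each $\cF u_j$ is supported in a dyadic ball $2^j\mathcal B$ and $\gamma>0$. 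I would recall the short proof: applying $\Delta_q$ term by term, in the annular case $\Delta_q u_j=0$ unless $|q-j|\leq N_0$, so boundedly many terms contribute and any $\gamma$ works; in the ball case $\Delta_q u_j=0$ unless $j\geq q-N_1$, so one sums a geometric series over $j\to\infty$, which converges precisely when $\gamma>0$.

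For the paraproduct I would write $f\prec g=\sum_{j\geq-1}u_j$ with $u_j=\bigl(\sum_{i<j-1}\Delta_i f\bigr)\Delta_j g$. Since $\sum_{i<j-1}\Delta_i f$ is spectrally supported in a ball of radius $\lesssim 2^{j-1}$ while $\Delta_j g$ is supported in an annulus of size $\sim 2^j$, the threshold $i<j-1$ guarantees that $\cF u_j$ lives in an annulus $2^j\mathcal A$, so the first lemma applies. If $\alpha\geq0$, the low-pass operator $f\mapsto\sum_{i<j-1}\Delta_i f$ is convolution against an $L^1$-bounded kernel, hence $\|u_j\|_{L^\infty}\lesssim\|f\|_{L^\infty}\|\Delta_j g\|_{L^\infty}\lesssim 2^{-j\beta}\|f\|_{L^\infty}\|g\|_\beta$, and the lemma with $\gamma=\beta$ gives $\|f\prec g\|_\beta\lesssim\|f\|_{L^\infty}\|g\|_\beta$. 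If $\alpha<0$, I would instead estimate $\|\sum_{i<j-1}\Delta_i f\|_{L^\infty}\leq\sum_{i<j-1}2^{-i\alpha}\|f\|_\alpha\lesssim 2^{-j\alpha}\|f\|_\alpha$ (the series being dominated by its top term because $-\alpha>0$), whence $\|u_j\|_{L^\infty}\lesssim 2^{-j(\alpha+\beta)}\|f\|_\alpha\|g\|_\beta$ and the lemma with $\gamma=\alpha+\beta$ yields the bound.

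For the resonant term I would group $f\circ g=\sum_{j\geq-1}v_j$ with $v_j=\Delta_j f\sum_{|i-j|\leq1}\Delta_i g$. Here $\Delta_j f$ and the $\Delta_i g$ with $|i-j|\leq1$ are all supported in annuli of comparable size $\sim 2^j$, and the Minkowski sum of two such annuli is a \emph{ball} of radius $\lesssim 2^j$ (it contains a neighbourhood of the origin), not an annulus; this is the one genuinely structural point in the argument, since it is exactly what forces us onto the ball lemma and hence onto the hypothesis $\alpha+\beta>0$. Bernstein then gives $\|v_j\|_{L^\infty}\lesssim 2^{-j\alpha}\|f\|_\alpha\cdot 2^{-j\beta}\|g\|_\beta=2^{-j(\alpha+\beta)}\|f\|_\alpha\|g\|_\beta$, and the ball lemma with $\gamma=\alpha+\beta>0$ gives $\|f\circ g\|_{\alpha+\beta}\lesssim\|f\|_\alpha\|g\|_\beta$. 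Everything else (convergence of the series, the few low-frequency blocks $\Delta_{-1}$, the precise dyadic constants) is routine bookkeeping, so I do not expect any serious obstacle; the sign of $\alpha$ in the first bullet and the restriction $\alpha+\beta>0$ in the third are both simply consequences of the annulus-versus-ball dichotomy.
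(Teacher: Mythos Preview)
Your proof is correct and is precisely the standard Littlewood--Paley argument found in the references the paper cites. Note that the paper itself does not prove this proposition: Section~\ref{sec:besov} opens by stating that all results there are taken from~\cite{BCD-bk} and~\cite{gip} and are merely recalled, so there is no ``paper's own proof'' to compare against---your argument is essentially the one in those sources.
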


\noindent Summarizing, the previous proposition tells us that the product of \textit{general} $f\in\CC^\alpha$ and $g\in\CC^\beta$ is well-defined if and only if $\alpha+\beta>0$ and in this case $fg\in\CC^\delta$, where $\delta=\min\{\alpha,\beta,\alpha+\beta\}$ (see~\cite[Lemma 2.1]{GIP15}, for the proof in this specific context).

One of the key results of the paracontrolled analysis carried out in~\cite{GIP15}, is a commutation relation between the operators $\prec$ and $\circ$, that we here recall (see~\cite[Lemma 2.4]{GIP15}). 

\begin{proposition}[Commutator Lemma]\label{prop:comm}
Let $\alpha, \beta,\gamma\in\mathbb R$ be such that $\alpha\in(0,1)$,  $\alpha+\beta+\gamma>0$ and $\beta+\gamma<0$. Then, for $f,\,g$ and $h$ smooth, the operator
$$
\mathscr R(f,g,h)=(f\prec g)\circ h-f(g\circ h)
$$
allows for the bound
$$
\|\mathscr R(f,g,h)\|_{\alpha+\beta+\gamma}\lesssim\|f\|_{\alpha}\|g\|_{\beta}\|h\|_{\gamma}
$$
hence, it can be uniquely extended to a bounded trilinear operator on $\CC^\alpha\times\CC^\beta\times\CC^\gamma$. 
\end{proposition}
\noindent In the following Proposition, which summarizes~\cite[Lemma 2.5]{CC13} and~\cite[Lemma A.8]{GIP15}, we describe the action of the heat kernel on Besov-H\"older functions and its relation with the paraproduct. 
 
\begin{proposition}[Schauder's Estimates]\label{prop:Schauder}
Let $P_t=e^{\frac{1}{2}t\Delta}$ be the heat flow, $\theta\geq0$ and $\alpha\in\R$. Let $f\in\CC^\alpha$ and $0\leq s<t$ then we have
$$
\|P_tf\|_{\alpha+2\theta}\lesssim t^{-\theta}\|f\|_{\alpha}\quad\text{and}\quad \|(P_{t-s}-\mathrm{Id})f\|_{\alpha-2\theta}\lesssim|t-s|^{\theta}\|f\|_{\alpha}\,.
$$
If $\alpha\in[0,1]$, the latter bound becomes
\[
\|\big(P_{t-s} -\mathrm{Id}\big)f\|_{L^\infty}\lesssim |t-s|^{\frac{\alpha}{2}} \|f\|_\alpha
\]
Moreover if $\alpha<1$ and $\beta\in\R$, the following commutator estimate holds 
\begin{equation}\label{commSchauder}
\|P_t(f\prec g)-f\prec P_tg\|_{\alpha+\beta+2\theta}\lesssim t^{-\theta}\|f\|_\alpha\|g\|_\beta
\end{equation}
for all $g\in\CC^\beta$.
\end{proposition}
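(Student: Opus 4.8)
The plan is to reduce every one of the four bounds to a computation on a single Littlewood--Paley block, the only analytic input being the action of the heat semigroup $\PH_t=e^{\frac12 t\Delta}$ on a frequency annulus. First I would fix a smooth $\psi$ supported in a fixed annulus $\{c_0\le|\xi|\le c_1\}$, $0<c_0<c_1$, with $\psi\equiv1$ on a neighbourhood of $\mathrm{supp}\,\varrho$ and wide enough that, after rescaling by $2^{-j}$, it also equals $1$ on the Fourier support of each product $S_{j-1}f\,\Delta_jg$ for $j$ large --- this is possible because those supports, rescaled by $2^{-j}$, all sit in one $j$-independent annulus. Writing $\psi_j=\psi(2^{-j}\cdot)$ and $K_t^j$ for the convolution kernel of the multiplier $\PH_t\psi_j(D)$, the key lemma is
\[
\int_{\R^d}(1+2^j|z|)\,|K_t^j(z)|\,\dd z\ \lesssim\ e^{-c\,t\,2^{2j}},\qquad j\ge0,\ t\ge0,
\]
together with a uniform $L^1$ bound for the kernel of $\PH_t\chi(D)$ at the lowest frequency. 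Both follow by rescaling $\xi=2^j\eta$ and noting that on $\mathrm{supp}\,\psi$ (bounded away from $0$) the symbol $e^{-s|\eta|^2/2}\psi(\eta)$ and all its derivatives are $\lesssim_N(1+s)^N e^{-cs}$, so its inverse Fourier transform and all its first moments decay like $e^{-cs}$; substituting $s=t2^{2j}$ gives the claim. I would also record the elementary bound $|e^{-\tau|\xi|^2/2}-1|\lesssim\min(1,\tau|\xi|^2)\le(\tau2^{2j})^{\theta}$, valid for $\theta\in[0,1]$ and $|\xi|\sim2^j$, which by the same rescaling shows the kernel of $\psi_j(D)(\PH_\tau-1)$ has $L^1$-norm $\lesssim(\tau2^{2j})^{\theta}$.

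Granting this, the smoothing estimate (1) is immediate: $\PH_t$ commutes with $\Delta_j$, so for $j\ge0$ one has $\|\Delta_j\PH_tf\|_{L^\infty}=\|K_t^j*\Delta_jf\|_{L^\infty}\lesssim e^{-ct2^{2j}}\|\Delta_jf\|_{L^\infty}\lesssim e^{-ct2^{2j}}2^{-j\alpha}\|f\|_\alpha$, whence $2^{j(\alpha+2\theta)}\|\Delta_j\PH_tf\|_{L^\infty}\lesssim 2^{2j\theta}e^{-ct2^{2j}}\|f\|_\alpha\lesssim t^{-\theta}\|f\|_\alpha$ by $\sup_{x\ge0}x^{\theta}e^{-cx}<\infty$, and the $j=-1$ block is harmless. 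For (2) I would run the same computation with $e^{-ct2^{2j}}$ replaced by $(\tau2^{2j})^{\theta}$, $\tau=t-s$, giving $\|\Delta_j(\PH_\tau-1)f\|_{L^\infty}\lesssim(\tau2^{2j})^{\theta}2^{-j\alpha}\|f\|_\alpha$ and then multiplying by $2^{j(\alpha-2\theta)}$ (with the caveat that for $\theta>1$ one first subtracts a Taylor polynomial of the exponential). For the sharper bound (3), with $\alpha\in(0,1]$, I would not pass through a Besov norm but sum the blocks of $(\PH_\tau-1)f$ directly in $L^\infty$, splitting the sum at $2^j\sim\tau^{-1/2}$ and using $\|\Delta_j(\PH_\tau-1)f\|_{L^\infty}\lesssim\min(\tau2^{2j},1)\,2^{-j\alpha}\|f\|_\alpha$; the low-frequency part is a geometric sum with positive exponent $2-\alpha$ and the high-frequency part one with positive exponent $\alpha$, each dominated by its boundary term of size $\tau^{\alpha/2}\|f\|_\alpha$.

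The commutator estimate (4) is the real content. Expanding $f\prec g=\sum_j S_{j-1}f\,\Delta_jg$, $S_{j-1}=\sum_{i<j-1}\Delta_i$, and using that $\Delta_j$ commutes with $\PH_t$, the left side equals $\sum_j C_j$ with $C_j:=\PH_t\big(S_{j-1}f\,\Delta_jg\big)-S_{j-1}f\cdot\PH_t\Delta_jg$. For $j$ large, both $S_{j-1}f\,\Delta_jg$ and $S_{j-1}f\cdot\PH_t\Delta_jg$ have Fourier support in the annulus where $\psi_j\equiv1$, so $\PH_t$ may be replaced by convolution with $K_t^j$ in each term, yielding the pointwise identity
\[
C_j(x)=\int_{\R^d}K_t^j(x-y)\,\big[S_{j-1}f(y)-S_{j-1}f(x)\big]\,\Delta_jg(y)\,\dd y .
\]
Hence $\|C_j\|_{L^\infty}\le\|\nabla S_{j-1}f\|_{L^\infty}\,\|\Delta_jg\|_{L^\infty}\int|z|\,|K_t^j(z)|\,\dd z$. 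Bernstein's inequality together with the hypothesis $\alpha<1$ --- which is exactly what makes $\sum_{i<j-1}2^{i(1-\alpha)}\lesssim2^{j(1-\alpha)}$ converge --- gives $\|\nabla S_{j-1}f\|_{L^\infty}\lesssim2^{j(1-\alpha)}\|f\|_\alpha$, while $\|\Delta_jg\|_{L^\infty}\lesssim2^{-j\beta}\|g\|_\beta$ and $\int|z||K_t^j(z)|\,\dd z\lesssim2^{-j}e^{-ct2^{2j}}$, so that $\|C_j\|_{L^\infty}\lesssim2^{-j(\alpha+\beta)}e^{-ct2^{2j}}\|f\|_\alpha\|g\|_\beta$ and therefore $2^{j(\alpha+\beta+2\theta)}\|C_j\|_{L^\infty}\lesssim 2^{2j\theta}e^{-ct2^{2j}}\|f\|_\alpha\|g\|_\beta\lesssim t^{-\theta}\|f\|_\alpha\|g\|_\beta$. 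Since each $C_j$ has Fourier support in an annulus of scale $2^j$, one has $\Delta_k\sum_jC_j=\sum_{|j-k|\le N}\Delta_kC_j$ for a fixed $N$, which upgrades the block bound to $\|\sum_jC_j\|_{\alpha+\beta+2\theta}\lesssim\sup_j2^{j(\alpha+\beta+2\theta)}\|C_j\|_{L^\infty}$; the finitely many small-$j$ terms and the $\Delta_{-1}$ contributions throughout are absorbed by the crude bound $\|C_j\|_{L^\infty}\lesssim\|S_{j-1}f\|_{L^\infty}\|\Delta_jg\|_{L^\infty}\lesssim\|f\|_\alpha\|g\|_\beta$.

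The hard part will be entirely in the preliminaries: proving the annulus-kernel bound $\int(1+2^j|z|)|K_t^j(z)|\,\dd z\lesssim e^{-ct2^{2j}}$ uniformly in $t$ (equivalently, the decay in $s$ of the heat kernel restricted to a fixed annulus), and, for (4), checking that one bump $\psi$ can be chosen to cover the Fourier supports of all the products $S_{j-1}f\,\Delta_jg$ simultaneously. Once these are settled the remainder is routine Littlewood--Paley bookkeeping; the one conceptual point worth isolating is the rewriting of $C_j$ as a kernel acting on the difference $S_{j-1}f(y)-S_{j-1}f(x)$, since this is precisely the step that converts the commutator into a gain of one spatial derivative on $S_{j-1}f$ --- legitimate only because $\alpha<1$.
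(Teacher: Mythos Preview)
The paper does not actually prove this proposition: Section~\ref{sec:besov} opens with the remark that all results there ``can be found in~\cite{BCD-bk} and~\cite{gip}'' and only records the statements. So there is no in-paper proof to compare against.

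Your argument is correct and is exactly the standard one given in those references. The reduction to a single Littlewood--Paley block via the annulus heat-kernel bound $\int(1+2^j|z|)|K_t^j(z)|\,\dd z\lesssim e^{-ct2^{2j}}$ is the approach of \cite{BCD-bk}, Chapter~2, and your treatment of the commutator --- replacing $\PH_t$ by convolution with $K_t^j$ on the common Fourier annulus of $S_{j-1}f\,\Delta_jg$ and $S_{j-1}f\cdot\PH_t\Delta_jg$, then pulling out the difference $S_{j-1}f(y)-S_{j-1}f(x)$ --- is precisely the mechanism used in \cite{gip} for such heat-flow/paraproduct commutators. The role of the hypothesis $\alpha<1$ (to sum $\sum_{i<j}2^{i(1-\alpha)}$) is correctly identified.

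Two small points worth tightening. First, your ``crude bound'' $\|C_j\|_{L^\infty}\lesssim\|f\|_\alpha\|g\|_\beta$ for the low-$j$ terms does not by itself produce the factor $t^{-\theta}$; but since the paraproduct sum starts only at $j\ge1$ (the conditions $i\ge-1$, $i<j-1$ force this), every surviving $C_j$ lives on a genuine annulus and the exponential bound $e^{-ct2^{2j}}$ applies there too, so no separate argument is needed. Second, for the time-regularity estimate with $\theta>1$ you correctly note one must subtract a higher-order Taylor polynomial of $e^{-x}$; in the applications of this paper only $\theta\in[0,1]$ is ever used, so this caveat is harmless.
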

\noindent For notational convenience, let us define $\I(f)(t) \eqdef \int_0^t P_{t-s}f(s)\dd s$, where the operator $P_t$ was introduced in Proposition~\ref{prop:Schauder}. 
Since we will be working with functions exploding at a certain rate as $t$ goes to $0$ and we will need to understand what happens when we convolve them with the heat kernel, we collect in the following corollary some simple results. 

\begin{corollary}\label{cor}
Let $t\in[0,T]$, $\alpha,\,\beta\in\R$, $\gamma,\,\delta\in[0,1)$, $\gamma'\in(0,\gamma]$ and $\varepsilon\in(0,1]$. Let $f\in C_{\eta,T}\CC^\alpha$. Then,
\begin{enumerate} 
\item if $\frac{\alpha-\beta}{2}>-1$ and $\vartheta\eqdef\frac{\alpha-\beta}{2}-\gamma+\delta+1>0$, we have
\[
t^\delta \|\I(f)(t)\|_\beta\lesssim T^\vartheta \sup_{s\in[0,T]} s^\gamma\|f(s)\|_\alpha
\]
\item if $\frac{\alpha-\varepsilon}{2}>-1$, $\gamma'<\delta$, $\frac{\alpha-\varepsilon}{2}-\gamma+\delta+1>0$ and $0\leq s<t$, we have
\[
s^\delta \frac{\|\I(f)(t)-\I(f)(s)\|_{L^\infty}}{|t-s|^\frac{\varepsilon}{2}}\lesssim T^\vartheta\sup_{s\in[0,T]} s^\gamma\|f(s)\|_\alpha
\]
where $\vartheta=\delta-\gamma$ if $\delta>\gamma$ and $\vartheta=\delta-\gamma'$ otherwise.
\end{enumerate}
\end{corollary}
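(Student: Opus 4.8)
The plan is to deduce both estimates from the Schauder bounds of Proposition~\ref{prop:Schauder}, the embedding $\CC^{\eps}\hookrightarrow L^\infty$ ($\eps>0$), and the elementary scaling identity
\[
\int_0^t (t-r)^{-p}r^{-q}\,\dd r=\mathrm B(1-p,1-q)\,t^{1-p-q},\qquad p,q<1,
\]
where $\mathrm B$ is Euler's Beta function. For part (1) Minkowski's inequality gives $\|\I(f)(t)\|_\beta\le\int_0^t\|\PH_{t-r}f_r\|_\beta\,\dd r$; applying the smoothing estimate of Proposition~\ref{prop:Schauder} with exponent $\theta=\max(\beta-\alpha,0)/2$ (merely the embedding $\CC^\alpha\hookrightarrow\CC^\beta$ when $\beta\le\alpha$) bounds the integrand by $(t-r)^{-\theta}r^{-\gamma}\sup_{s\in[0,T]}s^\gamma\|f(s)\|_\alpha$. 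The hypothesis $\tfrac{\alpha-\beta}{2}>-1$ is exactly $\theta<1$, and $\gamma<1$ by assumption, so the scaling identity applies and yields a bound by $t^{\vartheta'}\sup_{s\in[0,T]}s^\gamma\|f(s)\|_\alpha$ with $\vartheta'>0$ (and $\vartheta'=\vartheta$ in the relevant regime $\beta\ge\alpha$); bounding $t\le T$ then gives the claim.

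For part (2) I would split the increment using $\PH_{t-r}=\PH_{t-s}\PH_{s-r}$ for $r\le s$,
\[
\I(f)(t)-\I(f)(s)=\underbrace{(\PH_{t-s}-\Id)\,\I(f)(s)}_{=:R_1}+\underbrace{\int_s^t\PH_{t-r}f_r\,\dd r}_{=:R_2}.
\]
For $R_1$ the $L^\infty$-version of the second Schauder estimate, used with regularity index $\eps\in(0,1]$, gives $\|R_1\|_{L^\infty}\lesssim|t-s|^{\eps/2}\|\I(f)(s)\|_\eps$, after which part (1) with $\beta=\eps$ — legitimate exactly because $\tfrac{\alpha-\eps}{2}>-1$ and $\tfrac{\alpha-\eps}{2}-\gamma+\delta+1>0$ — controls $s^\delta\|\I(f)(s)\|_\eps$ by a positive power of $T$ times $\sup_{s\in[0,T]}s^\gamma\|f(s)\|_\alpha$. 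For $R_2$ I would estimate $\|\PH_{t-r}f_r\|_{L^\infty}\lesssim(t-r)^{-a}\|f_r\|_\alpha$ with $a$ slightly above $\max(-\alpha/2,0)$; the condition $\tfrac{\alpha-\eps}{2}>-1$ ensures that $a$ can be taken with $a<1-\eps/2$, which leaves room to factor out $(t-s)^{\eps/2}$, and one is left with $\int_s^t(t-r)^{-a}r^{-\gamma}\,\dd r$. Distinguishing the cases $t-s\le s$ and $t-s>s$, this integral is bounded by $(t-s)^{\eps/2}$ times a power of $\max(s,t-s)$, and it is here that the hypotheses $\gamma'\in(0,\gamma]$ and $\gamma'<\delta$ are used: they allow the residual $s$-singularity to be reabsorbed into the weight $s^{-\delta}$ at the price of the factor $T^{\delta-\gamma'}$.

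Part (1) and the term $R_1$ are routine. The main obstacle is the term $R_2$: one must simultaneously accommodate the explosive weight $s^{-\delta}$, the H\"older factor $(t-s)^{\eps/2}$, and a genuinely positive power of $T$, and it is precisely this three-way balancing of exponents that forces the somewhat technical list of assumptions (the auxiliary parameters $\eps$ and $\gamma'$, the strict inequality $\tfrac{\alpha-\eps}{2}-\gamma+\delta+1>0$, and the case split in the definition of $\vartheta$). Beyond organising this bookkeeping, nothing deeper than Proposition~\ref{prop:Schauder} and the Beta-function identity is needed.
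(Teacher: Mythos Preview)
Your approach is essentially the paper's: part (1) via Minkowski, Schauder, and the Beta-function scaling identity; part (2) via the identical splitting $\I(f)(t)-\I(f)(s)=(\PH_{t-s}-\Id)\,\I(f)(s)+\int_s^t\PH_{t-r}f_r\,\dd r$, with $R_1$ reduced to part (1) and $R_2$ handled by a direct integral estimate that brings in the auxiliary parameter $\gamma'$. The only cosmetic difference is that the paper fixes the Schauder exponent in $R_2$ at exactly $1-\eps/2$ (landing in $\CC^{\alpha+2-\eps}\hookrightarrow L^\infty$, so the integrand is $(t-r)^{\eps/2-1}r^{-\gamma}$ and the factor $(t-s)^{\eps/2}$ drops out of the substitution $r=s+x(t-s)$) and case-splits on the parameter relation $\delta\gtrless\gamma$ rather than on $t-s\gtrless s$.
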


\begin{proof} The proof is a rather straightforward application of Proposition~\ref{prop:Schauder}. 
Indeed, for 1. we have
\small\begin{align*}
t^\delta\|\I(f)(t)\|_\beta &\lesssim t^\delta\int_0^t \|P_{t-s} f(s)\|_{\beta}\dd s\lesssim t^\delta\int_0^t (t-s)^{\frac{\alpha-\beta}{2}} s^{-\gamma}\dd s\sup_{s\in[0,t]} s^\gamma\|f(s)\|_\alpha \\
&\lesssim T^{\frac{\alpha-\beta}{2}-\gamma+\delta+1} \sup_{s\in[0,T]} s^\gamma\|f(s)\|_\alpha
\end{align*}\normalsize
where the last line is justified by the fact that, since $\frac{\alpha-\beta}{2}>-1$ and $\gamma<1$, the integral is finite and $\frac{\alpha-\beta}{2}-\gamma+\delta+1>0$.

For the second part, the quantity on the left hand side is bounded by
\small\begin{multline*}
 \frac{s^\delta}{|t-s|^\frac{\varepsilon}{2}}\bigg(\int_s^t\|P_{t-r}f(r)\|_{\alpha+2-\varepsilon} \dd r + \big\|(P_{t-s}-\textrm{Id})\int_0^s P_{s-r}f(r)\dd r\|_{L^\infty}\bigg)\\
\lesssim \bigg(\frac{s^\delta}{|t-s|^\frac{\varepsilon}{2}}\int_s^t (t-r)^{\frac{\varepsilon}{2}-1} r^{-\gamma}\dd r +T^{\frac{\alpha-\varepsilon}{2}-\gamma+\delta+1}\bigg)\sup_{s\in[0,T]} s^\gamma \|f(s)\|_\alpha 
\end{multline*}\normalsize
where the last inequality follows by applying Proposition~\ref{prop:Schauder} first and the previous result then. In order to conclude, let us take a better look at the integral appearing on the right hand side of the previous. If $\delta>\gamma$ we have
\small\[
\int_s^t (t-r)^{\frac{\varepsilon}{2}-1} r^{-\gamma}\dd r \lesssim s^{-\gamma}\int_s^t (t-r)^{\frac{\varepsilon}{2}-1}\dd r \lesssim s^{-\gamma} (t-s)^{\frac{\varepsilon}{2}}
\]\normalsize
On the other hand if $\delta\leq \gamma$, upon setting $r=s+x(t-s)$ the integral on the left hand side of the previous becomes
\small\begin{multline*}
 (t-s)^\frac{\varepsilon}{2}\int_0^1 (1-x)^{\frac{\varepsilon}{2}-1}(s+x(t-s))^{-\gamma+\gamma'} (s+x(t-s))^{-\gamma'}\text{d}x\lesssim (t-s)^\frac{\varepsilon}{2} s^{-\gamma'}
\end{multline*}\normalsize
and the latter integral is finite. From this, the conclusion immediately follows. 
\end{proof}

\begin{remark}
The reader should keep in mind that the convolution with $P_t$ allows to gain $\theta\geq0$ regularity in space at the price of an explosion as time goes to $0$ of order $\theta/2$. One has then to adjust the choice of the parameters so that they fit the assumptions of the previous Corollary and this will be done implicitly throughout the paper not to heavy the presentation.
\end{remark}


\section{Solving the Generator equation}\label{sec:generatorequation}

The aim of this section is to show existence and uniqueness of solution for the generator equation connected to the SDE~\eqref{e:SDE}, i.e. the PDE
\begin{equation}\label{eq:generator}
\partial_tu+\frac{1}{2}\Delta u+V\cdot\nabla u = f\,,\qquad u(T,\cdot)=u^T(\cdot)
\end{equation}
where $T>0$ is arbitrary but fixed, $u^T$ is the terminal condition and $f\in C_T\CC^\beta$, for $\beta\in(-\frac{2}{3},0]$.
Let $(t,x)\in[0,T)\times\R$ and, for a function $\psi$, let $\CJ^T(\psi)$ be defined by $\CJ^T(\psi)(t)=\int_t^T\int_{\R^d}P_{r-t}\psi( r) \dd r$, where $P_t\eqdef e^{\frac{1}{2}t\Delta}$ is the usual heat flow.
Using the previous notation, the mild formulation of our generator equation reads
\begin{equation}\label{eq:generatormild}
u(t)= P_{T-t}u^T+\CJ^T\big(f+\nabla u\cdot V\big)(t).
\end{equation}
Now, since $V\in C_T\CC^\beta_{\R^d}$, Schauder's estimates (Proposition~\ref{prop:Schauder}) suggest that the solution $u$ to the previous equation cannot have spatial regularity better than $\beta+2$. 
According to Proposition~\ref{prop:bony}, the product between $\nabla u$ and $V$ is well-posed if and only if the sum of the regularities of the factors is strictly positive, which, in the present case, reads $\beta+1+\beta=2\beta+1>0$, i.e. $\beta>-\frac{1}{2}$. 
Therefore, for $\beta\in(-\frac{1}{2},0)$, we can directly apply Bony's and Schauder's estimates and construct the solution to the equation directly. Even if spaces, notation and tools might look different, this case can be easily shown to correspond to the one treated in~\cite{FFF} (see Remark~\ref{rem:FIR}).
On the other hand, to overcome the $-\frac{1}{2}$ barrier, another method has to be exploited and paracontrolled distributions must be introduced. 


\subsection{The Young case: $\beta\in(-\frac{1}{2},0)$}

In order to construct the solution of the generator equation we will use a fixed point argument, i.e. we will introduce a suitable map and prove it is a contraction on a suitable space, hence admitting a unique fixed point according to Banach Fixed Point theorem.
To do so, let us fix a terminal time $T>0$, $\alpha\in(1-\beta,\beta+2)$, a terminal condition $u^T\in\CC^{\beta+2}$ and $f\in C_T\CC^\beta$. Given a function $u$ in $C_{\bar T, T}\CC^\alpha$, for $\bar T\in[0,T)$, we define the map $\Gamma_{\bar T}(u)$ as
\begin{equation}\label{map:Gamma}
\Gamma_{\bar T}(u)(t)\eqdef P_{T-t}u^T+\CJ^T\big(f+\nabla u\cdot V\big)(t)
\end{equation}
where $\CJ^T$ is the operator defined above and we omitted the dependence on space. Notice that
\begin{align*}
\|\Gamma_{\bar T}(u)(t)\|_\alpha &\leq \|P_{T-t}u^T\|_\alpha+\|\CJ^T\big(f+\nabla u\cdot V\big)(t)\|_\alpha\\
&\lesssim \|u^T\|_{\beta+2} + \bar T^{\frac{\beta-\alpha}{2}+1}\|f\|_{C_T\CC^\beta} 
+\bar T^{\frac{\beta-\alpha}{2}+1}\|\nabla u\cdot V\|_{C_T\CC^\beta}\\
&\lesssim \|u^T\|_{\beta+2} + \bar T^{\frac{\beta-\alpha}{2}+1}\big(\|f\|_{C_T\CC^\beta}+\|V\|_{C_T\CC^\beta_{\R^d}}\|u\|_{C_{\bar T,T}\CC^\alpha} \big)
\end{align*}
where the second inequality is a simple application of Corollary~\ref{cor} and the last follows by Bony's estimates Proposition~\ref{prop:bony}. Therefore, setting $\gamma=\frac{\beta-\alpha}{2}+1$ we have 
\[
\|\Gamma_{\bar T}(u)\|_{C_{\bar T,T}\CC^\alpha}\lesssim \|u^T\|_{\CC^{\beta+2}}+\bar T^\gamma\Big(\|f\|_{C_T\CC^\beta}+\|V\|_{C_T\CC^\beta_{\R^d}}\|u\|_{C_{\bar T, T}\CC^\alpha}\Big)\,.
\]
The next proposition summarizes what we have obtained so far and shows how to build a local in time solution to~\eqref{eq:generatormild} for $V\in C([0,T],\CC^\beta(\R^d,\R^d))$, $\beta\in\big(-\frac{1}{2},0\big)$. 

\begin{proposition}\label{prop:fixedpointyoung}
Let $T>0$, $\beta\in\big(-\frac{1}{2},0\big)$ and $\alpha\in(1-\beta,\beta+2)$. For $(u^T,f, V)\in \CC^\alpha\times C_T\CC^\beta\times C_T\CC^\beta_{\R^d}$, let $\Gamma_{\bar T}$ be the map on $C_{\bar T,T}\CC^\alpha$ defined by~\eqref{map:Gamma}.
Then there exists $\gamma>0$ such that the following bounds hold true
\begin{equation}\label{ineq:young1}
\|\Gamma_{\bar T}(u)\|_{C_{\bar T,T}\CC^\alpha}\lesssim \|u^T\|_{\CC^{\beta+2}}+\bar T^\gamma\Big(\|f\|_{C_T\CC^\beta}+\|V\|_{C_T\CC^\beta_{\R^d}}\|u\|_{C_{\bar T,T}\CC^\alpha}\Big) 
\end{equation}
and
\begin{equation}\label{ineq:young2}
\|\Gamma_{\bar T}(u)-\Gamma_{\bar T}(v)\|_{C_{\bar T,T}\CC^\alpha}\lesssim  \bar T^\gamma\|V\|_{C_{T}\CC^\beta_{\R^d}}
 \|u-v\|_{C_{\bar T,T}\CC^\alpha}
\end{equation}
Hence, there exists $T^\star\in[0,T)$ depending only on $\|V\|_{C_T\CC^\beta(\R^d)}$, and a unique function $u\in C([T-T_\star,T],\CC^\alpha)$ that solves the generator equation~\eqref{eq:generatormild}.  
\end{proposition}
\begin{proof}
The bound~\eqref{ineq:young1} is proved above and an analogous argument shows that~\eqref{ineq:young2} holds true as well. Therefore there exists $T^\star\in(0,T)$ sufficiently close to $T$ and depending only on $\|V\|_{C_T\CC^\beta_{\R^d}}$ such that the map $\Gamma_{T^\star}$ is a strict contraction of $C([T-T_\star,T],\CC^\alpha_{\R^d})$ in itself and, by Banach fixed point theorem, it admits a unique fixed point.
\end{proof}

We have now all the elements in place to state and prove the following theorem. 

\begin{theorem}\label{th:Generator-Young}
Let $\beta\in (-\frac{1}{2},0)$, $\alpha\in(1-\beta,\beta+2)$ and $T>0$. For any $(u^T,f, V)\in \CC^\alpha\times C_T\CC^\beta\times C_T\CC^\beta_{\R^d}$, there exists a unique solution $u\in C_T\CC^\alpha$ to the generator equation~\eqref{eq:generator}, where the product $\nabla u\cdot V$ is defined according to Proposition~\ref{prop:bony}. Moreover, the solution $u$ satisfies 
\[
\|u\|_{C_T^\eps\CC^\rho}\lesssim\|u^T\|_{\alpha}+\|f\|_{C_T\CC^\beta}+\|u\|_{C_T\CC^\alpha}\|V\|_{C_T\CC^\beta_{\R^d}}
\]
for every $\rho$ and $\eps$ such that $\rho+2\eps\leq\alpha$. 
At last, the flow of the generator equation, i.e. the map assigning to every triplet $(u^T,f,V)\in \CC^\alpha\times C_T\CC^\beta\times C_T\CC^\beta_{\R^d}$ the solution $u$ to~\eqref{eq:generator}, is a locally Lipschitz continuous map.
\end{theorem}

\begin{proof}
Thanks to Proposition~\ref{prop:fixedpointyoung}, we already know that there exists $T^\star\in[0,T)$ and a unique function $u\in C([T-T_\star, T],\CC^\alpha)$ that solves the generator equation~\eqref{eq:generatormild}.  
Now, since the equation is linear and consequently the $T^\star$ determined above depends only on $V$ and not on $u^T$, we can extend our solution to the whole interval $[0,T]$, iterating the construction we just carried out, so that the resulting $u$ is defined on the whole interval $[0,T]$. 

The time regularity of the solution can be easily obtained by an interpolation argument. Finally, taking $V,\,\tilde{V}\in C_T\CC^\beta_{\R^d}$, $f,\,\tilde{f}\in C_T\CC^\beta$, 
$u^T,\,\tilde{u}^T\in C_T\CC^{\beta+2}$ and denoting by $u^V$ (resp. $u^{\tilde{V}}$) the solution of the equation $\mathscr G^Vu=f$ (resp. $\mathscr G^{\tilde{V}}u=\tilde{f}$) with terminal condition $u^T$ (resp. $\tilde{u}^T$), it is easy to show that, if
$$
\max\{\|u^T\|,\|\tilde{u}^T\|,\|f\|,\|\tilde{f}\|,\|V\|,\|\tilde{V}\|\}\leq R
$$
then 
$$
\|u^V-u^{\tilde{V}}\|_{C_T\CC^\alpha}\lesssim_R \|u^T-\tilde{u}^T\|+\|f-\tilde{f}\|+\|V-\tilde{V}\|
$$
which proves that the flow is indeed a locally Lipschitz map (for more details, see for example the proof of an analogous result in~\cite{Gub04}).
\end{proof}

\begin{remark}\label{rem:FIR}
As we pointed out before, the analysis performed in this section corresponds to the case treated in~\cite{FFF} with the only difference that we preferred to work with H\"older spaces of negative regularity instead of Sobolev spaces. There is no doubt that we could have used the latter spaces as well since Bony's and Schauder's estimates (Proposition~\ref{prop:bony} and~\ref{prop:Schauder}) hold also for these spaces (see~\cite[Chapter 2]{BCD11}). 
\end{remark}


\subsection{The rough case: $\beta\in\big(-\frac{2}{3},-\frac{1}{2}\big]$}\label{subsec:geneqrough}
\label{sec:solv-Generator}

The analysis of the rough case is more subtle and requires a better understanding of the structure of the solution to the generator equation. 
Let us assume for the moment that $V$ is a smooth function. Thanks to Bony's decomposition of the product we can write~\eqref{eq:generatormild} as 
\begin{equation}\label{eq:Paracontrolled-distribution}
u(t)=\CJ^T(f+\nabla u\prec V)+u^{\flat}(t)
\end{equation}
where
$$
u^{\flat}(t)\eqdef P_{T-t}u^T+\CJ^T(\nabla u\succ V+\nabla u\circ V)
$$
What we see at this point is that when $V$ is a distribution in $C([0,T],\CC^{\beta}(\R^d,\R^d))$ the only ill-defined term of the equation~\eqref{eq:Paracontrolled-distribution} is the resonant term contained in $u^\flat$. Nevertheless, Proposition~\eqref{prop:bony} suggests that, {\it if} it were well-posed, $u^\flat(t)\in\CC^{2\theta-1}$ for $\theta<\beta+2$. 

As we announced before, we need some insight regarding the expected structure of the solution. Indeed, even if it is not possible to make sense of the ill-posed product for all distributions belonging to spaces whose regularities do not sum up to a strictly positive quantity, maybe it is possible to identify a suitable subspace for which it is. 
To recognize such a subspace we begin with the following lemma, which allows to commute the heat kernel $\CJ^T$ and the paraproduct $\prec$. 

\begin{lemma}\label{lemma:comm-sc}
Let $T>0$, $\theta\in[1,\beta+2)$, $\rho>\frac{\theta-1}{2}$ and $h\in C_T\CC_{\R^d}^{\beta}$. For $\bar T\in[0,T)$, let $g\in C_{\bar T,T}\CC^\theta$ be such that 
$\nabla g\in C^\rho_{\bar T,T}L^\infty_{\R^d}$. Then the following inequality holds
\small\begin{equation*}
\|\CJ^T(\nabla g\prec h)-\nabla g\prec\CJ^T(h)\|_{C_{\bar T,T}\CC^{2\theta-1}}\lesssim \bar T^\kappa\big(\|g\|_{C_{\bar T,T}\CC^\theta}+\|\nabla g\|_{ C^\rho_{\bar T,T}L^\infty_{\R^d}}\big)\|h\|_{C_T\CC^\beta_{\R^d}}
\end{equation*}\normalsize
with $\kappa\eqdef\min\big\{1-\frac{\theta-\beta}{2},\rho-\frac{\theta-1}{2}\big\}>0$. 
\end{lemma}
\begin{proof}
By direct computation, for $t\in[T-\bar T, T]$, we can express the right hand side of the inequality as the sum of two terms $I_1$ and $I_2$, respectively given by
\small\begin{align*}
I_1(t)&=\int_t^T\big( P_{r-t}(\nabla g( r)\prec h( r))-\nabla g( r)\prec P_{r-t}h( r)\big)\dd r\,,\\
I_2(t)&=\int_t^T(\nabla g( r)-\nabla g(t))\prec  P_{r-t}h(r)\,\dd r .
\end{align*}\normalsize
Using the commutation result in~\eqref{commSchauder} we directly get
\small$$
\|I_1(t)\|_{2\theta-1}\lesssim\int_t^T(r-t)^{-\frac{\theta-\beta}{2}}\|g( r)\|_{\theta}\|h( r)\|_{\beta}\dd r\lesssim \bar T^{1-(\theta-\beta)/2}\|g\|_{C_{\bar T,T}\CC^\theta}\|h\|_{C_T \CC^\beta_{\R^d}}
$$\normalsize
For $I_2$ we apply Schauder's estimates and obtain
\small\begin{align*}
\|I_2(t)\|_{2\theta-1}&\lesssim\int_t^T\|\nabla g( r)-\nabla g(t)\|_{L^\infty_{\R^d}}(r-t)^{-(\theta+1)/2}\dd r\|h\|_{C_T\CC^{\theta-2}(\R^d)}\\
&\lesssim\int_t^T(r-t)^{\rho-(\theta+1)/2}\dd r\|\nabla g\|_{C^\rho_{\bar T,T}L^\infty_{\R^d}}\|h\|_{C_T\CC^{\beta,\R^d}}\\
&\lesssim \bar T^{1-(-\rho+\frac{\theta+1}{2})}\|\nabla g\|_{C^\rho_{\bar T,T}L^\infty_{\R^d}}\|h\|_{C_T\CC^\beta_{\R^d}}
\end{align*}\normalsize
and this ends the proof.
\end{proof}

The previous Lemma suggests that, at least at a formal level, the solution $u$ of our equation admits the following expansion
\begin{equation}\label{eq:paraansatz}
u=\CJ^T(f)+\nabla u\prec\CJ^T(V)+u^\sharp\qquad 
\end{equation}
where
$$
u^\sharp=u^\flat+\CJ^T(\nabla u\prec V)-\nabla u\prec\CJ^T(V)
$$
should be more regular than $u$ itself. On the one hand, equation~\eqref{eq:paraansatz} conveys the algebraic structure we expect the solution of~\eqref{eq:generatormild} to have and on the other, it tells us that $u$, in terms of regularity exhibits the same behaviour as $\CJ^T(V)$. This is exactly the core idea of the paracontrolled approach developed in~\cite{GIP15} and it will allow us to conveniently define the ill-posed term.

We are now ready to introduce the space of paracontrolled distributions associated to the equation~\eqref{eq:generator}.

\begin{definition}\label{def:paracontrolled}
Let $T>0$, $\frac{4}{3}<\alpha<\theta<\beta+2$ and $\rho>\frac{\theta-1}{2}$. For $f\in C_T\CC^\beta$ and $\bar T\in[0,T)$, we define the space of paracontrolled distributions $ \DD^{\alpha,\theta,\rho}_{\bar T,T,V}$ as the set of couples of distributions $(u,u')\in C_{\bar T,T}\CC^\theta\times C_{\bar T,T}\CC^{\alpha-1}_{\R^d}$ such that 
$$
u^{\sharp}(t)\eqdef u(t)-u'(t)\prec\CJ^T(V)(t)-\CJ^T(f)(t)\in \CC^{2\alpha-1}
$$
for all $T-\bar T\leq t\leq T$. We equip $\DD^{\alpha,\theta,\rho}_{\bar T, T,V}$ with the norm
\small \[
\|(u,u')\|_{ \mathscr D^{\alpha,\theta,\rho}_{\bar T,T,V}}\eqdef\|u\|_{C_{\bar T,T}\CC^\theta} +\|\nabla u\|_{C^\rho_{\bar T,T}L^{\infty}_{\R^d}}+\|u'\|_{C_{\bar T,T}\CC^{\alpha-1}_{\R^d}}+\|u^\sharp\|_{C_{\alpha-1,\bar T, T}\CC^{2\alpha-1}}
\]\normalsize
and we introduce the metric $d_{ \DD^{\alpha,\theta,\rho}_{\bar T,T,V}}$, defined for all $(u,u'),(v,v')\in\DD^{\alpha,\theta,\rho}_{\bar T,T,V}$ by
\[
d_{ \DD^{\alpha,\theta,\rho}_{\bar T,T,V}}\big((u,u'),(v,v')\big)=\|(u,u')-(v,v')\|_{ \DD^{\alpha,\theta,\rho}_{\bar T,T,V}}
\]
Endowed with the metric $d_{\DD^{\alpha,\theta,\rho}_{\bar T,T,V}}$, the space $\big( \DD^{\alpha,\theta,\rho}_{\bar T,T,V},d_{ \DD^{\alpha,\theta,\rho}_{\bar T,T,V}}\big)$ is a complete metric space.
\end{definition}

The advantage of the paracontrolled formulation is that the problem of well-posedness for the product can be transferred from the function $u$, that we have to determine and is therefore unknown, to $V$, or better $\CJ^T(V)$, which on the other hand is given. 
To see how this works, take $(u,u')\in \DD^{\alpha,\theta,\rho}_{\bar T, T,V}$. Differentiating $u$, for $j=1,...,d$, we get
\small\begin{equation}\label{eq:Usharp}
\partial_ju=\CJ^T(\partial_j f)+\sum_{i=1}^{d}u'^{,\,i}\prec \CJ^T(\partial_j V^i)+U^{\sharp,\,j},\,\, U^{\sharp,\,j}=\partial_j u^\sharp+\sum_{i=1}^{d}\partial_ju'^{,\,i}\prec \CJ^T(V^{i})
\end{equation}\normalsize
so that the resonant term, for $V$ smooth, can be written as
\[
\partial_ju\circ V^j=\CJ^T(\partial_jf)\circ V^j+\sum_{i=1}^{d}\big(u'^{,\,i}\prec \CJ^T(\partial_jV^{i})\big)\circ V^j+U^{\sharp,\,j}\circ V^j
\]
By Bony's paraproduct estimate we immediately deduce that $U^{\sharp,\,j}$ is $(2\alpha-2)$-regular in space and, since $\alpha>\frac{4}{3}$, we conclude that the last summand is well-defined even when $V(t)\in \CC^{\beta}_{\R^d}$. In order to make sense of the second summand we need to exploit the commutator in Proposition~\ref{prop:comm} which gives
\small\[
\sum_{i=1}^{d}\big(u'^{,\,i}\prec \CJ^T(\partial_jV^{i})\big)\circ V^j=\sum_{i=1}^{d}u'^{,\,i}\big(\CJ^T(\partial_j V^{i})\circ V^{i}\big)+\sum_{i=1}^d\mathscr R(u'^{,\,i},\CJ^T(\partial_j V^i),V^{j})
\]\normalsize
where the last summand of the previous can be extended in a continuous way to $V\in C_T\CC^{\beta}_{\R^d}$ since $3\alpha-4>0$. 
The only terms which are still ill-posed are $\CJ^T(\partial_j V^{i})\circ V^{i}$ for $i,\,j=1,...,d$. Notice though that they do not depend on $u$ anymore but only on $V$, so if we can build them {\it in some way}, we are done and we can make sense of the product.  This is the reason why we introduce the notion of rough distribution. 

\begin{definition}[Rough Distribution]\label{def:rough}
Let $\beta\in\left(-\frac{2}{3},-\frac{1}{2}\right]$, $\gamma<\beta+2$ and $T>0$. Set $\ch^\gamma=C_T\CC^{\gamma-2}_{\R^d}\times C_T\CC^{2\gamma-3}_{\R^{d^2}}$. We define the space of {\it rough distributions} as  
\[
\cX^\gamma\eqdef\text{cl}_{\ch^\gamma}\big\{ \CK(\eta)\eqdef\left(\eta,(\CJ^T(\partial_j\eta^i)\circ\eta^j)_{i,j=1,\dots,d}\right),\quad \eta\in C_T\CC^{\infty}_{\R^d}\big\}
\]
where $\text{cl}_{\ch^\gamma}\{\cdot\}$ denotes the closure of the set in brackets with respect to the topology of $\ch^\gamma$ and, for a function $\psi:\R^d\to\R$,  $\CJ^T(\psi)$ is the solution of the equation 
\[
\big(\partial_t+\frac{1}{2}\Delta\big)\CJ^T( \psi)=\psi,\quad \CJ^T( \psi)(T,\cdot)=0.
\]
We denote by $\CV=(\CV^1,\CV^2)$ a generic element of $\cX^\gamma$ and whenever $\CV^1=V$ we say that $\CV$ is a lift (or enhancement) of $V$.
\end{definition} 
\begin{remark}
The reader familiar with rough path theory can appreciate the similarity of the space introduced above with the space of rough paths associated to a given path.
 
Let us point out that, as in the above-mentioned situation, there is in general no canonical choice for the extra-term $\mathscr J_T(\partial_j\eta^i)\circ\eta^{j}$ when $\eta$ has space regularity $\gamma-2$. 
However there are several simple cases, as the ones in~\cite[Section 5]{FR} for $d=1$, in which this construction can be successfully carried out. To witness, let us consider the time-independent one dimensional situation with $V=\partial_x Y$ and $Y\in\CC^{\beta+1}$. Then, for $x\in\R$
\[
\CJ^T( \partial_x^2 Y)(t,x)=\int_\R \int_t^T\partial_t P_{r-t}(x-y) \dd r Y(y)\dd y=P_{T-t} Y(x)
\] 
where the first equality follows by the fact that $P_t$ is the fundamental solution to the heat equation. Then, $P_{T-t} Y\in \CC^{\beta+3}$ and $P_{T-t} Y\circ \partial_x Y$ is well-posed if and only if $\beta>-\frac{3}{2}$ which is well-beyond the $-\frac{2}{3}$ barrier. 

\end{remark}

In force of the previous definition and thanks to the computations above, $\nabla u\circ V$ can be decomposed as
\begin{align*}
\nabla u\circ V=&\sum_{j=1}^{d}\CJ^T(\partial_j f)\circ V^j+\sum_{i,j=1}^{d}u'^{,\,i}(\CJ^T(\partial_j V^i)\circ V^j)\\
&+\sum_{i,j=1}^d\mathscr R(u'^{,\,i},\CJ^T(\partial_j V^{i}),V^j)+\sum_{j=1}^dU^{\sharp,\,j}\circ V^j
\end{align*}
which suggests that the left hand side of the previous should be a continuous functional of $(u,u')\in\DD^{\alpha,\theta,\rho}_{\bar T, T,V}$ and $\CV \in \cX^\gamma$. This is exactly what the next proposition proves.

\begin{proposition}\label{prop:reson}
Let $T>0$ and $\frac{4}{3}<\alpha<\theta<\gamma<\beta+2$. Let $\CV=( \CV_1, \CV_2)\in\cX^\gamma$ be an enhancement of $V$, $f$ be either a function in $C_TL^\infty$ or coincide with one of the components of $\CV_1$, i.e. $f\in \{ \CV_1^i,\,i=1,. . .,d\}$, and, for $\bar T\in[0,T)$, $(u,u')\in  \DD^{\alpha,\theta,\rho}_{\bar T, T,V}$. Define $\nabla u\circ V$ by
\begin{align}
\nabla u\circ V \eqdef\sum_{j=1}^{d}H^j(f, \CV)&+\sum_{i,j=1}^{d}u'^{,\,i} \CV_2^{i,j} \notag\\
&+\sum_{i,j=1}^d\mathscr R(u'^{,\,i},\CJ^T(\partial_j  \CV_1^{i}), \CV_1^j)+\sum_{j=1}^dU^{\sharp,j}\circ  \CV_1^j, \label{eq:algebraic}
\end{align}
where, in case $f=\CV_1^j$, $H^j(f, \CV)\eqdef \CV_2^{j,j}$, while if $f\in C_TL^\infty$, $H^j(f, \CV)\eqdef \CJ^T(\partial_j f)\circ \CV_1^j$, and $U^\sharp$ is given by the expression in~\eqref{eq:Usharp}. Then $\nabla u\circ V$ is well-defined and the following estimate holds
\begin{align*}
\|\big(\nabla u\circ V\big)\|_{C_{\alpha-1,\bar T,T}\CC^{2\gamma-3}}\lesssim\,\,&\1_F\|f\|_{C_TL^\infty}\| \CV_1\|_{C_T\CC^{\gamma-2}_{\R^d}}\\
&+\Big(1+\| \CV\|_{ \cX^\gamma}\Big)^2\big(1+\|(u,u')\|_{\DD^{\alpha,\theta,\rho}_{\bar T,T,V}}\big)
\end{align*}
where $F\eqdef \{f\in C_TL^\infty\}$. At last, under the previous assumptions, the product $\nabla u\cdot V$, defined according to Bony's decomposition and equation~\eqref{eq:algebraic}, is well-defined.
\end{proposition}
\begin{proof}
In order to prove the bound in the statement, one has to consider each of the summands in~\eqref{eq:algebraic} separately. For the first three, it is an immediate consequence of the assumption $\CV\in\cX^\gamma$, Bony's estimates (Proposition~\ref{prop:bony}) and the commutator lemma (Proposition~\ref{prop:comm}) respectively. 
For the last, notice that, by the definition of $U^\sharp$ given in~\eqref{eq:Usharp}, the fact that $(u,u')\in\DD^{\alpha,\theta,\rho}_{\bar T, T,V}$ and, again, Bony's paraproduct estimates, we have
\[
\|U^{\sharp,\,j}\|_{2\alpha-2}\lesssim(T-t)^{-\frac{\alpha-1}{2}}\|(u,u')\|_{\DD^{\alpha,\theta,\rho}_{\bar T, T,V}}+\bar T^{\frac{\gamma-\alpha}{2}}\|(u,u')\|_{\DD^{\alpha,\theta,\rho}_{\bar T, T,V}}\|V\|_{C_T\CC^\beta_{\R^d}}
\]
which immediately gives, for $\alpha>1$
\[
\sup_{t\in[T-\bar T,T]}(T-t)^{\frac{\alpha-1}{2}}\|U^{\sharp,\,j}(t)\|_{2\alpha-2}\lesssim \|(u,u')\|_{\DD^{\alpha,\theta,\rho}_{\bar T,T,V}}\big(1+\bar T^{\frac{\gamma-1}{2}}\|V\|_{C_T\CC^\beta_{\R^d}}\Big)
\]
Hence, by applying Bony's estimate for $\circ$ we get the expected bound on the fourth summand as well.
The last part of the statement is once more a consequence of Proposition~\ref{prop:bony}. 
\end{proof}

At this point we have all we need in order to set up our fixed point argument. Indeed, let $\CV=(\CV_1,\CV_2)\in \cX^\theta$ be an enhancement of $V$, i.e. $V=\CV_1$, and set $M_{\bar T}$ to be the map from $\DD^{\alpha,\theta,\rho}_{\bar T,T,V}$ to $C_T\CC^\alpha$ given by 
\begin{equation}
\label{eq:fixed-point-sol}
M_{\bar T}(u,u')=\CJ^T(f)+\CJ^T(\nabla u\cdot V)+\Psi^T_t
\end{equation}
for $(u,u')\in \DD^{\alpha,\theta,\rho}_{\bar T, T,V}$, $\alpha<\theta$ and $\Psi_t^T= P_{T-t}u^T$, where the term $\nabla u \cdot V$ is defined according to Proposition~\ref{prop:reson}. Set
\begin{equation}
\label{eq:fixed-point}
\begin{split}
\mathscr M_{\bar T}:&\mathscr D^{\alpha,\theta,\rho}_{\bar T,T,V}\to C([T-\bar T,T],\mathscr C^\alpha(\mathbb R^d))\times C([T-\bar T,T],\mathscr C^{\alpha-1}(\mathbb R^d,\mathbb R^d))
\\&(u,u')\mapsto(M(u,u'),\nabla u) 
\end{split}
\end{equation}
We can now prove that this map is a contraction in the space $\mathscr D_{\bar T,T,V}^{\alpha,\theta,\rho}$ and therefore it admits a unique fixed point. 

\begin{proposition}\label{prop:fixedpointrough}
Let $0<T<1$, $\frac{4}{3}<\alpha<\theta<\gamma<\beta+2$, $\rho\in(\frac{\theta-1}{2},\frac{\gamma-1}{2})$. Let $u^T\in\CC^\gamma$, $V\in C_T\CC^\beta_{\R^d}$,  $\CV=(\CV_1,\CV_2)\in\mathscr X^\gamma$ be an enhancement of $V$ and $f$ be either a function in $C_TL^\infty$ or coincide with one of the component of $\CV_1$, i.e. $f\in \{ \CV_1^i,\,i=1,. . .,d\}$. Then, for $\bar T\in[0,T)$, there exists $\kappa>0$, depending only on $\alpha,\,\theta,\,\rho$ and $\gamma$, such that the map $\mathscr M_{\bar T}$ defined by~\eqref{eq:fixed-point} satisfies the following estimates
\begin{align}
\|\mathscr M_{\bar T}(u,u')\|_{\DD_{\bar T,T,V}^{\alpha,\theta,\rho}}
\lesssim &\1_{F}\|f\|_{C_TL^\infty_{\R^d}}\|V\|_{C_T\CC^\beta_{\R^d}}+\|u^T\|_{\gamma}\notag\\
&+\big(1+\|\CV\|_{\,\!_{ \cX^\gamma}}\big)^2\big(1+\bar T^\kappa\|(u,u')\|_{\mathscr D^{\alpha,\theta,\rho}_{\bar T,T,V}}\big)\label{eq:estim-map-1}
\end{align}
where $F\eqdef \{f\in C_TL^\infty\}$ and
\small\begin{equation}\label{eq:estim-map-2}
\|\mathscr M_{\bar T}(u,u')-\mathscr M_{\bar T}(v,v')\|_{\DD^{\alpha,\theta,\rho}_{\bar T,T,V}}\lesssim\big(1+\|\CV\|_{\cX^\gamma}\big)^2)\big(1+\bar T^\kappa\|(u,u')-(v,v')\|_{\DD^{\alpha,\theta,\rho}_{\bar T,T,V}}\big)
\end{equation}\normalsize
and is therefore a strict contraction in $\DD_{\bar T, T,V}^{\alpha,\theta\rho}$ for $T-\bar T$ small enough. 
\end{proposition}

\begin{proof}
Let $(u,u')\in\DD_{\bar T,T,V}^{\alpha,\theta,\rho}$. In order to prove that $\mathscr M_{\bar T}(u,u')=(M_{\bar T}(u,u'),\nabla u)\in\mathscr D_{\bar T,T,V}^{\alpha,\theta,\rho}$ it suffices to estimate the terms 
$$
M_{\bar T}(u,u')=\Psi^T_t+\CJ^T(f+\nabla u\circ V), \qquad\qquad M_{\bar T}(u,u')'\eqdef\nabla u
$$
and 
$$
M_{\bar T}(u,u')^\sharp\eqdef M_{\bar T}(u,u')-\CJ^T(f)-\nabla u\prec \CJ^T(V)
$$
in suitable norms. More precisely we have to control the following quantity 
\small\begin{equation*}
\begin{split}
\|\mathscr M_{\bar T}(u,u')\|_{\DD^{\alpha,\theta,\rho}_{\bar T,T,V}}&\eqdef\|M_{\bar T}(u,u')\|_{C_{\bar T,T}\CC^\theta}+\|\nabla M_{\bar T}(u,u')\|_{C^\rho_{\bar T,T}L^{\infty}_{\R^d}}\\
&+\|M_{\bar T}(u,u')'\|_{C_{\bar T,T}\CC^{\alpha-1}_{\R^d}}+\|M_{\bar T}(u,u')^\sharp\|_{C_{\alpha-1,\bar T, T}\CC^{2\alpha-1}}
\end{split}
\end{equation*}\normalsize
Let us begin with first. 
According to the definition of $M_{\bar T}(u,u')$ we have to estimate the $C_{\bar T,T}\CC^\theta$-norm of
\begin{equation}\label{terms}
\Psi^T,\,\CJ^T(f),\,\CJ^T(\nabla u\prec V),\,\CJ^T(\nabla u\succ V),\,\CJ^T(\nabla u\circ V).
\end{equation}
Since the heat-flow $P_t$ is a bounded linear operator from $\CC^\theta$ to itself 
we get immediately that 
$$
\sup_{t\leq T}\|\Psi_t^T\|_{\theta}\lesssim\|u^T\|_{\theta}\lesssim\|u^T\|_{\gamma}
$$
By Corollary~\ref{cor} we have 
\[
\|\CJ^T(f)(t)\|_{\theta}\lesssim
\begin{cases}
\bar T^{\frac{\gamma-\theta}{2}}\|f\|_{C_TL^\infty} &\text{if $f\in C_TL^\infty$}\\
\bar T^{\frac{\gamma-\theta}{2}}\|f\|_{C_T\CC^{\gamma-2}} &\text{if $f\in\{V^k;k=1,. . .,d\}$}
\end{cases}
\] 
Let us focus on $\CJ^T(\nabla u\prec V)$ and $\CJ^T(\nabla u\prec V)$. Applying once more Corollary~\ref{cor} and Bony's estimates we obtain
\small\begin{align*}\label{eq:bound-2-para-1}
\|\CJ^T(\nabla u\prec V)(t)\|_\theta&\lesssim \bar T^{\frac{\gamma-\theta}{2}}\|\nabla u\|_{C_{\bar T,T}\CC^{\theta-1}_{\R^d}}\|V\|_{C_T\CC^{\gamma-2}_{\R^d}} \lesssim \bar T^{\frac{\gamma-\theta}{2}}\|(u,u')\|_{\DD_{\bar T,T,V}^{\alpha,\theta,\rho}}\|\CV\|_{\cX^\gamma}\\
\|\CJ^T(\nabla u\succ V)(t)\|_\theta&\lesssim \bar T^{\frac{\gamma-1}{2}}\|\nabla u\|_{C_{\bar T,T}\CC^{\theta-1}_{\R^d}}\|V\|_{C_T\CC^{\gamma-2}_{\R^d}} \lesssim \bar T^{\frac{\gamma-1}{2}}\|(u,u')\|_{\DD_{\bar T,T,V}^{\alpha,\theta,\rho}}\|\CV\|_{\cX^\gamma}
\end{align*}\normalsize
We will now treat the resonant term $\CJ^T(\nabla u\circ V)$. By the first part of Corollary~\ref{cor} and Proposition~\ref{prop:reson} we directly see that its $\CC^\theta$-norm is bounded by
\small\begin{multline*}
\bar T^{\frac{2\gamma-\alpha-\theta}{2}}\sup_{t\in[T-\bar T,T]}(T-t)^{\frac{\alpha-1}{2}}\|\nabla u\circ V(t)\|_{2\gamma-3}\\
\lesssim \bar T^{\frac{2\gamma-\alpha-\theta}{2}}\Big(\1_F\|f\|_{C_TL^\infty}\| \CV_1\|_{C_T\CC^{\gamma-2}_{\R^d}}+\Big(1+\| \CV\|_{ \cX^\gamma}\Big)^2\big(1+\|(u,u')\|_{\DD^{\alpha,\theta,\rho}_{\bar T,T,V}}\big)\Big)
\end{multline*}\normalsize
where $F\eqdef \{f\in C_TL^\infty\}$ and this completes the study of the first term. 

Consider now $\|\nabla M_{\bar T}(u,u')\|_{C^\rho_TL^{\infty}_{\R^d}}$. In this case we have to bound the derivative of the terms in~\eqref{terms} in the $C^\rho_{\bar T,T}L^{\infty}_{\R^d}$-norm.
Thanks to Proposition~\ref{prop:Schauder} we see that
\small$$
\|\nabla\Psi^T_t-\nabla \Psi_s^T\|_{\infty}=\|(P_{t-s}-1)P_s\nabla u^T\|_{\infty}\lesssim |t-s|^{\rho}\|\nabla u^T\|_{2\rho}\lesssim|t-s|^{\rho}\|u^T\|_{\gamma}
$$\normalsize
The second part of Corollary~\ref{cor} guarantees that, for $0\leq s<t\leq T$,
\small\[
\frac{\|\CJ^T(\nabla f)(t)-\CJ^T(\nabla f)(s)\|_{\infty}}{|t-s|^\rho}\lesssim
\begin{cases}
\bar T^{\frac{\gamma-1}{2}-\rho}\|f\|_{C_TL^\infty} &\text{if $f\in C_TL^\infty$}\\
\bar T^{\frac{\gamma-1}{2}-\rho}\|f\|_{C_T\CC^{\gamma-2}} &\text{if $f\in\{V^k;k=1,. . .,d\}$}
\end{cases}
\] \normalsize
Analogously, by Bony's estimates we get
\small\[
\frac{\|\CJ^T\big(\nabla(\nabla u\prec V) \big)(t)-\CJ^T\big(\nabla(\nabla u\prec V) \big)(s)\|_{\infty}}{|t-s|^\rho}\lesssim \bar T^{\frac{\gamma-1}{2}-\rho}\|\nabla u\|_{C_{\bar T,T}\CC^{\theta-1}_{\R^d}}\|V\|_{C_T\CC^{\gamma-2}_{\R^d}}
\]\normalsize
and
\small\[
\frac{\|\CJ^T\big(\nabla(\nabla u\succ V) \big)(t)-\CJ^T\big(\nabla(\nabla u\succ V) \big)(s)\|_\infty}{|t-s|^\rho}\lesssim \bar T^{\frac{\theta+\gamma-2}{2}-\rho}\|\nabla u\|_{C_{\bar T,T}\CC^{\theta-1}_{\R^d}}\|V\|_{C_T\CC^{\gamma-2}_{\R^d}}
\]\normalsize
which imply the correct bound. At last, by Corollary~\ref{cor} we have
\small\begin{multline*}
\frac{\|\CJ^T\big(\nabla(\nabla u\circ V) \big)(t)-\CJ^T\big(\nabla(\nabla u\circ V )\big)(s)\|_\infty}{|t-s|^\rho}\\
\lesssim \bar T^{\frac{2\gamma-2\rho-\alpha}{2}}\sup_{t\in[T-\bar T,T]}(T-t)^{\frac{\alpha-1}{2}}\|\nabla u\circ V(t)\|_{2\gamma-3}
\end{multline*}\normalsize
which in turn can be bounded via Proposition~\ref{prop:reson}. Concerning, the so called Gubinelli derivative, by definition, $M_{\bar T}(u,u')'=\nabla u$ and, by assumption, $u\in C_{\bar T,T}\CC^\theta$. Hence
$$
\|M_{\bar T}(u,u')'\|_{C_{\bar T,T}\CC^{\alpha-1}_{\R^d}}\lesssim\|u\|_{C_{\bar T,T}\CC^\alpha}\lesssim\|u\|_{C_{\bar T,T}\CC^\theta}\lesssim\|(u,u')\|_{\mathscr D^{\alpha,\theta,\rho}_{\bar T,T,V}}
$$
However this is not yet the needed bound due to the missing factor $\bar T$ to some positive power. Let us observe that 
\begin{equation*}
\begin{split}
\|\nabla u(t)\|_{\alpha-1}\lesssim\|\nabla u(t)-\nabla u(T)\|_{\alpha-1}+\| u^T\|_{\alpha}
\end{split}
\end{equation*}
Now it suffices to notice that we can estimate the first summand in two different ways
\begin{equation*}
\|\Delta_i(\nabla u(t)-\nabla u(T))\|_{\infty}\lesssim
\left\{
\begin{split}
&2^{-i(\theta-1)}\|u\|_{C_{\bar T,T}\CC^\theta}\\
&(T-t)^{\rho}\|\nabla u\|_{C^\rho_{\bar T,T} L^\infty_{\R^d}}
\end{split}
\right.
\end{equation*}
where $\Delta_i$ is the $i$-th Littlewood-Paley block. Then interpolating this two bounds we get 
\small$$
\|\nabla u(t)-\nabla u(T)\|_{\alpha-1}\lesssim \bar T^{\rho(1-\eps)}\|u\|^\eps_{C_{\bar T,T}\CC^\theta}\|\nabla u\|^{1-\eps}_{C^\rho_{\bar T,T} L^\infty_{\R^d}}\lesssim \bar T^{\rho(1-\eps)}\|(u,u')\|_{\DD_{\bar T, T,V}^{\alpha,\theta,\rho}}
$$\normalsize
with $\eps\eqdef\frac{\alpha-1}{\theta-1}\in(0,1)$. Therefore 
$$
\|M_{\bar T}(u,u')'\|_{C_{\bar T,T}\CC^{\alpha-1}_{\R^d}}=\|\nabla u\|_{C_{\bar T,T}\CC^{\alpha-1}_{\R^d}}\lesssim \bar T^{\rho(1-\eps)}\|(u,u')\|_{\DD_{\bar T, T,V}^{\alpha,\theta,\rho}}+\|u^T\|_{\theta}
$$
and we can now move to the term involving the remainder $M_{\bar T}(u,u')^\sharp$. 
By definition, $M_{\bar T}(u,u')^\sharp$ is given by
\small\begin{equation*}
\begin{split}
M_{\bar T}(u,u')^{\sharp}&=M_{\bar T}(u,u')-\CJ^T(f)-\nabla u\prec \CJ^T(V)
\\&=\Psi^T+\big(\CJ^T(\nabla u\prec V)-\nabla u\prec\CJ^T(V)\big)+\CJ^T(\nabla u\succ V)+\CJ^T(\nabla u\circ V)
\end{split}
\end{equation*}\normalsize
Now, by Schauder's estimates we directly have 
$$
(T-t)^{\frac{\alpha-1}{2}}\|\Psi^T(t)\|_{2\alpha-1}\lesssim (T-t)^{\frac{\gamma-\alpha}{2}}\|u^T\|_\gamma
$$
which gives the needed bound for the term $\Psi^T$. Lemma~\ref{lemma:comm-sc} and the fact that $\alpha<\theta$ imply 
\small\begin{multline*}
(T-t)^{\frac{\alpha-1}{2}}\|\CJ^T(\nabla u\prec V)-\nabla u\prec\CJ^T(V)\|_{C_T\CC^{2\alpha-1}_{\R^d}}\\
\lesssim \bar T^\kappa (T-t)^{\frac{\alpha-1}{2}}\big(\|u\|_{C_{\bar T,T}\CC^\theta}+\|\nabla u\|_{C^\rho_{\bar T,T} L^\infty_{\R^d}}\big)\|V\|_{C_T\CC^\gamma_{\R^d}}
\end{multline*}\normalsize
For $\CJ^T\big(\nabla(\nabla u\succ V \big))$ we exploit once more Corollary~\ref{cor} and Bony's estimates, so that
\begin{equation*}
(T-t)^{\frac{\alpha-1}{2}}\|\CJ^T(\nabla u\succ V)(t)\|_{2\alpha-1}\lesssim \bar T^{\frac{\theta+\gamma-\alpha-1}{2}}\|u\|_{C_{\bar T,T}\CC^\theta}\|V\|_{C_T\CC^\gamma_{\R^d}}
\end{equation*}
At this point it remains only to bound the norm of the term $\CJ^T(\nabla u\circ V)$. Again by Corollary~\ref{cor} and Proposition~\ref{prop:reson} we have
\small\begin{align*}
(T-&t)^{\frac{\alpha-1}{2}}\|\CJ^T(\nabla u\circ V)(t)\|_{2\alpha-1}\lesssim \bar T^{\gamma-\alpha}\sup_{t\in[0,T]}(T-t)^{\frac{\alpha-1}{2}}\|\nabla u\circ V(t)\|_{2\gamma-3}\\
&\lesssim \bar T^{\gamma-\alpha}\bigg(\1_F\|f\|_{C_TL^\infty}\| \CV_1\|_{C_T\CC^{\gamma-2}_{\R^d}}+\Big(1+\| \CV\|_{ \cX^\gamma}\Big)^2\big(1+\|(u,u')\|_{\DD^{\alpha,\theta,\rho}_{\bar T, T,V}}\big)\Big)
\end{align*}\normalsize
where $F\eqdef \{f\in C_TL^\infty\}$.  Now, putting all the previous estimates together we conclude the validity of the bound~\eqref{eq:estim-map-1}. Notice that the map $(u,u')\mapsto M_{\bar T}(u,u')-\Psi^T-\CJ^T(f)$ is linear and therefore~\eqref{eq:estim-map-2} can be obtained by the previous computations, simply replacing $u$ with $u-v$ and $u'$ with $u'-v'$.

At last, thanks to~\eqref{eq:estim-map-2}, we see that there exists $T^\star=T^\star(\|\CV\|_{\cX^\gamma})>0$ small enough such that the map $\mathscr M_{T^\star}$ is a strict contraction from $ \DD^{\alpha,\theta,\rho}_{T^\star, T,V}$ into itself.
\end{proof}

As in the Young case, the previous proposition represents the crucial technical tool through which we can state and prove the following theorem. 

\begin{theorem}\label{th:Generator-r}
Let $\beta\in\left(-\frac{2}{3},-\frac{1}{2}\right]$,  $\frac{4}{3}<\theta<\gamma<\beta+2$ and $T>0$. Let $\mathscr S_c$ be the operator assigning to every triplet $(u^T,f,\eta)\in\CC^\gamma\times C_T\CC^2\times C_T\CC^\infty_{\R^d}$ the solution $u\in C_T\CC^\theta$ to  equation~\eqref{eq:generator}. 

Then, there exists a locally Lipschitz continuous map $\mathscr S_r: \CC^\gamma \times \big(C_TL^\infty\cup\{V^k,k=1,. . .,d\}\big)\times\cX^\gamma\to C_T \CC^\theta$ that extends $\mathscr S_c$ in the following sense 
$$
\mathscr S_c(u^T,f,\eta)(t)=\mathscr S_r(u^T,f, \CK(\eta))(t),
$$
for all $t\leq T$ and $(u^T,f,\eta)\in\CC^\gamma\times C_T\CC^2\times C_T\CC^\infty_{\R^d}$. Moreover, for any $\rho<\frac{\theta-1}{2}$, $\mathscr S_r$ takes values in  $C_T^{\frac{\theta}{2}}L^\infty$ and $\nabla \mathscr S_r\in C^{\rho}_TL^\infty_{\R^d}$.
\end{theorem}

\begin{proof}
As in the proof of Theorem~\ref{th:Generator-Young} and thanks to Proposition~\ref{prop:fixedpointrough}, we can apply Banach fixed point theorem and get the existence of a unique solution $(u,\nabla u)\in\DD^{\alpha,\theta,\rho}_{T^\star, T,V}$ to~\eqref{eq:generator}. 
Moreover, for $T>0$ fixed, $T^\star$ is independent on the terminal condition $u^T$, hence we can iterate our fixed point procedure on $[T-2T^\star,T-T^\star],[T-3T^\star,T-2T^\star],\dots$ and extend our solution to the whole interval $[0,T]$. 

Since the solution $u$ is obtained through a fixed point procedure on the space of paracontrolled distributions, it is well-known that it gives rise to a continuous flow $(u^T,f,\CV)\mapsto\mathscr S_r(u^T,f,\CV)$ (see~\cite{GIP15} for more details). 

Let $V$ be a smooth function and $\CV=(V,\CJ^T(\partial_j V^i)\circ V^j)_j$ its enhancement.  
The algebraic expansion given by the equation~\eqref{eq:algebraic} implies that the term $\nabla u\cdot V$, defined in Proposition~\ref{prop:reson}, coincides with the usual product and, therefore, the solution $u$ constructed via the fixed point argument outlined above corresponds to the classical one by uniqueness. Therefore the relation
$$
\mathscr S_c(u^T,f,V)=\mathscr S_r(u^T,f,(V,\CJ^T(\partial_j V^i)\circ V^j))
$$
is justified, where we recall that $\mathscr S_c$ is the flow of the equation 
\[
\mathscr G^Vu=h \qquad\qquad u(T,\cdot)=u^T
\]
and this completes the proof of Theorem~\ref{th:Generator-r}.
\end{proof}


\section{The Martingale Problem}\label{sec:MartProblem}

In the previous section, we solved the generator equation and Theorems~\ref{th:Generator-Young} and~\ref{th:Generator-r} represent the formal version of what was loosely stated in Theorem~\ref{t:GenEq}.
As it was mentioned in the introduction, this was the first step we had to undertake in order to be able to formulate and prove well-posedness for the SDE, formally given by
\begin{equation}\label{e:SDE1}
\dd X_t=V(t,X_t)\dd t+\dd B_t,\qquad X_0=x
\end{equation}
where $B$ is a $d$-dimensional Brownian motion, $x$ a point in $\R^d$ and $V$ is a function of time taking values in $\CC^\beta_{\R^d}$, for $\beta\in(-\frac{2}{3},0)$. 
Before proceeding, let us introduce a simple convention that collects under one name the rough and the Young regime.

\begin{definition}\label{def:ground}
Let $\beta\in(-\frac{2}{3},0)$. We say that $V\in C_T\CC^\beta_{\R^d}$ is a {\it ground drift} if either $\beta\in(-\frac{1}{2},0)$ or $\beta\in(-\frac{2}{3},-\frac{1}{2}]$ and that $V$ can be lifted to an element $\CV\in\cX^\gamma$, for some $\gamma<\beta+2$.
\end{definition}

\noindent We are now ready to formulate a suitable Stroock-Varadhan martingale problem for~\eqref{e:SDE1}, namely

\begin{definition}\label{def:sv}
Let $T>0$ and $V\in C_T\CC^\beta_{\R^d}$ be a ground drift according to Definition~\ref{def:ground}. Let $\Omega=C([0,T],\R^d)$ and $\CF=\cb(C([0,T],\R^d))$, the usual Borel $\sigma$-algebra on it. 
We say that a probability measure $\prob$ on $(\Omega,\CF)$, endowed with the canonical filtration $(\CF_t)_{0\leq t\leq T}$, solves the martingale problem with generator $\mathscr G^V$ starting at $x\in\R^d$, if the canonical process $X_t(\omega)=\omega(t)$ satisfies the two following properties
\begin{enumerate}
\item $\prob(X_0=x)=1$
\item For every $\tau\leq T$, $f\in C_TL^\infty$ and every $u^\tau\in\CC^{\beta+2}$ the process
 $$
 \left\{u(t,X_t)-\int_0^tf(s,X_s)\dd s\right\}_{t\in[0,\tau]}
 $$ 
 is a square integrable martingale under $\mathbb P$, where $u$ is the solution of the generator equation~\eqref{eq:generator} constructed in Theorems~\ref{th:Generator-Young} and~\ref{th:Generator-r}.
\end{enumerate}
\end{definition}

\noindent The next theorem guarantees that the Stroock-Varadhan Martingale Problem formulated in the previous definition is indeed well-posed (see also Theorem~\ref{t:MartProb}). 

\begin{theorem}\label{th:svm}
Let $T>0$ and $V\in C_T\CC^\beta_{\R^d}$ be a ground drift according to Definition~\ref{def:ground}. Then there exists a unique probability measure $\prob$ on $\left(\Omega,\CF,(\CF_t)_{0\leq t\leq T}\right)$ which solves the martingale problem with generator $\mathscr G^V$ starting at $x$, for every $x\in\R^d$. Moreover, the canonical process $X_t(\omega)=\omega(t)$ under $\prob$ is strong Markov.
\end{theorem}

\begin{proof}
We will focus on the case $\beta\in(-\frac{2}{3},-\frac{1}{2}]$, the case $\beta>-\frac{1}{2}$ being analogous. 
From now on we will take $(\rho,\theta,\gamma)\in \R^3$ as in Theorem~\ref{th:Generator-r}, $V\in C_T\CC^\beta_{\R^d}$ such that there exists $V^n$ a smooth regularization of $V$ for which, as $n\to\infty$, $\CK(V^n)$ converges to $\CV$ in $\ch^\gamma$, where the operator $\CK$ is defined according to Definition~\ref{def:rough}. 
\newline

\noindent \textbf{Existence:} 
Let $X^n$ be the unique strong solution of the SDE
\begin{equation}\label{SDEfortheproof}
\dd X_t^n=V^n(t,X^n_t)\dd t+\dd B_t,\qquad X_0= x.
\end{equation}
For $i=1,. . .,d$, let $u^n=(u^{n,1},. . .,u^{n,d})$ be such that for every $i$, $u^{n,i}$ is the unique solution of the equation 
$$
\mathscr G^{V^n}u^{n,i}=V^{n,i},\qquad u^{T}(x)=0.
$$
Take $0<s<t<T$ and apply It\^o's formula to the process $\{u^n(t,X^n_t)\}_t$, so that
\small\begin{align*}
u^n(t,X^n_t)-u^{n}(s,X^n_s)&=\int_s^tV^{n}(r,X^n_r)\dd r+\int_s^t\nabla u^n(r,X^n_r)\dd B_r\\
&=X^n_t-X^n_s-(B_t-B_s)+\int_s^t\nabla u^n(r,X^n_r)\dd B_r
\end{align*}\normalsize
where the last equality is a direct consequence of the fact that $X^n$ solves~\eqref{SDEfortheproof} by construction. In order to prove tightness for the sequence $(X^n)_n$ we want to apply Kolmogorov's criterion, therefore we need to bound the $p$-th moment of the increments of $X^n$, uniformly in $n$. For $p\geq 1$, by standard properties of the Brownian motion $B$ and Burkholder-Davis-Gundy inequality, we obtain
\small\begin{align}
\E\left[|X^n_t-X^n_s|^p\right]\lesssim&\E\left[|u^n(t,X^n_t)-u^n(s,X^n_s)|^{p}\right]\notag\\
&+|t-s|^{p/2}+\mathbb E\left[\left(\int_s^t|\nabla u^n(r,X^n_r)|^2\dd r\right)^{p/2}\right]\label{ineq:inter}
\end{align}\normalsize
Notice that the last term of the previous can be bounded by
\[
\int_s^t|\nabla u^n(r,X^n_r)|^2\dd r\lesssim\int_s^t\|\nabla u^n(r,.)\|^2_{\infty}\dd r\lesssim(t-s)\|u^n\|^2_{C_T\CC^\theta}
\]
where we recall that $\theta>1$ and hence $\CC^{\theta-1}$ is continuously embedded in $L^\infty(\R^d)$. Adding and subtracting $u^n(s,X_t)$, the first summand in~\eqref{ineq:inter} becomes
\[
|u^n(t,X_t)-u^n(s,X_s)|\lesssim\|u^n(t)-u^n(s)\|_{\infty}+\|\nabla u^n(s)\|_{\infty}|X_t^n-X_s^n|
\]
Now, for the first term we can exploit the regularity in time of our solution, while for the second $\|\nabla u^n(s)\|_{\infty}=\|\nabla u^n(T)-\nabla u^n(s)\|_{\infty}\lesssim T^\rho\|\nabla u^n\|_{C^\rho L^\infty_{\R^d}}$, since we chose $u^n$ as the solution to the generator equation with zero terminal condition. 

Since $u^n$ converges to the solution $u$ constructed in the Theorem~\ref{th:Generator-r} in the topology of $\DD^{\alpha,\theta,\rho}_{T,V}$, each of the norms of $u^n$ is bounded by the analogous of $u$ and~\eqref{ineq:inter} becomes
\begin{align*}
\E[|X^n_t-X^n_s|^p]\lesssim& |t-s|^{p\frac{\theta}{2}}\|u\|_{C^{\frac{\theta}{2}}_TL^\infty}\\
&+T^{p\rho}\|\nabla u\|^p_{C^\rho_TL^\infty_{\R^d}} \E[|X_t^n-X_s^n|^{p}]+|t-s|^{p/2}(1+\|u\|^p_{C_T\CC^\theta})
\end{align*}
At this point, the bound~\eqref{eq:estim-map-1} in Proposition~\ref{prop:fixedpointrough} guarantees that it is possible to choose $T^\star>0$ such that $T^{\star}(1+\|\CV\|_{\,\!_{\mathscr H^\gamma}})^2\ll1$. Pulling the second summand of the right hand side to the left hand side, we obtain
$$
\E[|X^n_t-X^n_s|^p]\lesssim |t-s|^{p\frac{\theta}{2}}\|u\|_{C^{\frac{\theta}{2}}_TL^\infty}+|t-s|^{p/2}(1+\|u\|^p_{C_T\CC^\theta})
$$
for all $0<s<t<T^\star$, uniformly in $n$ (the right hand side does not depend on $n$ anymore). Denote by $X^{n,1}(t)=X^n(T^\star+t)$. Since $T^\star$ does not depend on the initial condition $x$ and the solution $u$ is defined on the whole interval $[0,T]$, we can repeat the previous argument so that 
$$
\E[|X^{n}_{t+T^\star}-X^{n}_{s+T^\star}|^p]=\E[|X^{n,1}_t-X^{n,1}_s|^p]\lesssim|t-s|^{p/2}
$$
for all $s,t\leq T^\star$, uniformly in $n$. Now, when $s\leq T^\star\leq t\leq2T^\star$ we have that 
\small$$
\E[|X_t^n-X_s^n|^p]\lesssim_p\E[|X^n_t-X^n_{T^\star}|^p]+\E[|X^n_{T^\star}-X^n_s|^p]\lesssim|T^\star-t|^{p/2}+|T^\star-s|^{p/2}\lesssim|t-s|^{p/2}
$$\normalsize
Iterating the procedure over $[2T^\star,3T^\star], [3T^\star,4T^\star],\dots$, we finally get
$$
\sup_{n} \mathbb E[|X^n_t-X^n_s|^p]\lesssim|t-s|^{p/2}
$$
for all $s,t\leq T$. At this point, we can apply Kolmogorov's criterion which implies tightness  of the sequence $(X^n)_n$ in $C([0,T], \R^d)$. 

It remains to show that every limiting process solves our martingale problem. To this purpose, let $(X^n)_n$ be a converging subsequence, $\tau\leq T$, $(f,u^\tau)\in C_TL^\infty\times \CC^{\gamma}$ and $u^n$ be the solution to the generator equation $\mathscr G^{V^n}u^{n}=f$ with terminal condition $u^\tau$. Applying It\^o's formula to $u^n(t,X^n_t)$ we obtain
\[
u^n(t,X^n_t)-u^n(0,x)-\int_0^t f(s,X^n_s)\dd s= \int_0^t \nabla u^n(s,X^n_s)\dd B_s
\]
Let $Z_t^n$ denote the left hand side of the previous. Then
\[
\E\left|Z_t^n\right|^2\lesssim T\|\nabla u^n\|_{C_TL^\infty_{\R^d}}\lesssim T\|\nabla u\|_{C_TL^\infty_{\R^d}}
\]
which implies that $(Z^n_t,t\leq T)$ is a bounded sequence of square integrable martingales. Now, since for every $n$, $Z^n$ is a martingale, we have that
\begin{equation}\label{eq:ma}
\mathbb E[(Z^n_t-Z^n_s)F(X_r^n,r\leq s)]=0
\end{equation}
holds for any continuous functional $F:C([0,s],\mathbb R^d)\to\mathbb R$. At this point, to complete the proof, we only need to pass to the limit in the previous equality.  
Let us observe that, thanks to the fact that $X^n$ converges in distribution to $X$ and $(u^n,\nabla u^n)$ converges uniformly to $(u,\nabla u)$, also $Z^n$ converges in distribution to $Z_t=u(t,X_t)-u(0,x)-\int_0^t f(s,X_s)\dd s$. 
Analogously, $(Z^n_t-Z^n_s)F(X_r^n,r\leq s)$ converges in distribution to $(Z_t-Z_s)F(X_r,r\leq s)$ and, since $(Z^n_t,t\leq T)$ is a sequence with uniformly bounded second moment, which in particular implies that $((Z^n_t-Z^n_s)F(X_r^n,r\leq s))_{n}$ is a uniformly integrable family, we can interchange limit and expectation in the identity~\eqref{eq:ma} by dominated convergence theorem (and Skorohod representation theorem), so that at last we get
$$
\mathbb E[(Z_t-Z_s)F(X_r,r\leq s)]=0
$$
which proves the claim. 
\newline

\noindent \textbf{Uniqueness and strong Markov property:} Let $\prob_1$ and $\prob_2$ be two solutions of the martingale problem starting at $x$. Let $f\in C([0,T],L^\infty(\R^d))$ and $u$ be the solution of the generator equation $\mathscr G^V u=f$ with zero terminal condition.   Since under both $\prob_1$ and $\prob_2$ the canonical process $X$ is such that $\left\{u(t,X_t)-\int_0^tf(s,X_s)\dd s\right\}_{t\in[0,T]}$ is a martingale, we have
$$
u(0,x)=\E_{\prob_i}\left[u(T,X_T)-\int_0^Tf(s,X_s)\dd s\right]=-\E_{\prob_i}\left[\int_0^Tf(s,X_s)\dd s\right]
$$
for $i=1,\,2$. Therefore,
$$
\E_{\prob_1}\left[\int_0^Tf(s,X_s)\dd s\right]=\E_{\prob_2}\left[\int_0^Tf(s,X_s)\dd s\right]
$$ 
Since the previous holds for every $f\in C([0,T],L^\infty(\R^d))$, we conclude that the process $X$ has the same marginals under $\prob_1$ and $\prob_2$. 
By a straightforward adaptation of \cite[Theorem 4.2]{EK} (the main difference lying on the fact that our generator is time-dependent, but that does not affect the proof in any sense), we deduce that it has the same finite dimensional distributions and it is Markov with respect to both probability measures, which in turn guarantees uniqueness. 
For the strong Markov property we need instead \cite[Theorems 4.6 and 4.2]{EK}.  

\end{proof}


\section{Construction of the Polymer Measure}\label{sec:Polymer}

In this section we will construct the so called polymer measure in dimension $d=2,3$ and show how to exploit the techniques developed so far to prove Theorem~\ref{t:ConstrPol}. More concretely, our purpose is to make sense of
\begin{equation}\label{eq:polymer}
\mathbb Q_T(\dd\omega)=Z_0^{-1}\exp\left(\int_0^T\xi(\omega_s)\dd s\right)\mathbb W_T(\dd\omega)
\end{equation}
where $\mathbb W$ is the Wiener measure on $C([0,T],\R^d)$, $d=2,3$, $\xi$ a spatial white noise on the $d$-dimensional torus $\mathbb T^d$ independent of $\mathbb W$, and $Z_0$ is an infinite renormalization constant. 
Let us recall that the periodic space Gaussian white noise is a centered Gaussian random field which formally satisfies
\begin{equation}\label{e:WN}
\E[\xi(x)\xi(y)]=\der(x-y)
\end{equation}
for any two points $x,\,y\in\TT^d$, where, again, $\TT^d$ is the $d$-dimensional torus and $d=2$ or $3$. 
As the covariance function in~\eqref{e:WN} suggests, the white noise is too singular for~\eqref{eq:polymer} to make sense. 
In order to have an expression that we can manipulate, we consider a mollified version of the noise, defined by
\begin{equation}\label{e:MollNoise}
\xi^\eps=\sum_{k\in\mathbb Z^d}m(\eps k)\hat \xi(k)e_k
\end{equation}
where $\{\hat \xi(k)\}_{k\in\Z^d}$ is a family of standard normal random variables with covariance $\E[\hat \xi(k_1)\hat \xi(k_2)]=\1_{\{k_1=-k_2\}}$, $e_k$ is the Fourier basis $L^2(\mathbb T^d)$ and $m$ a smooth radial function with compact support such that $m(0)=1$.

Now, given $\xi_\eps$, let $\mathbb Q^\eps$ be the measure defined by
\[
\mathbb Q^\eps_T(\dd\omega)=Z^{-1}_\eps\exp\left(\int_0^T\xi^\eps(\omega_s)\dd s\right)\mathbb W(\dd \omega),\,\,\tilde Z_\eps=\mathbb E_{\mathbb W}\left[\exp\left(\int_0^T\xi^\eps(\omega_s)\dd s\right)\right]
\]
and $h^\eps:\R^+\times\mathbb T^d\to\R$ be the local in time solution to the equation 
\begin{equation}\label{eq:KPZtype}
\partial_th^\eps=\frac{1}{2}\Delta h^\eps+\frac{1}{2}|\nabla h^\eps|^2+\xi^\eps-c_\eps \qquad h(0,x)=0
\end{equation}
where $c_\eps$ is a constant that will be characterized in Theorem~\ref{th:stoc}. For $\xi^\eps$ smooth, $h^\eps$ is known to exist and be regular, therefore the process 
\[
M_t^\eps(\omega_s)=\int_0^t\nabla h^\eps(T-s,\omega_s)\dd \omega_s\quad \langle M^\eps\rangle_t(\omega)=\int_0^t|\nabla h^\eps(T-s,\omega_s)|^2\dd s
\]
where $\langle M\rangle_\cdot$ is the quadratic variation of $M$, is clearly a square integrable martingale. Girsanov's theorem then implies that, under the measure defined by
\[
\tilde{\mathbb Q}^\eps_T(\dd\omega)=\exp\left(M_T^\eps-\frac{1}{2}\langle M^\eps\rangle_T\right)\mathbb W(\dd\omega),
\]
the canonical process has the same law as the solution $X^\eps$ to the SDE
\[
\dd X^\eps_t=V^\eps(t,X^\eps_t)\dd t+\dd B_t,\quad X_0=x
\]
when one chooses $V^\eps(t,x)$ to be $\nabla h^\eps (T-t,x)$. But now, applying It\^o's formula to $h^\eps (T-t,X^\eps_t)$ and recalling that $h^\eps$ solves~\eqref{eq:KPZtype} we conlude that $\tilde{\mathbb Q}_T^\eps(\dd \omega)=\mathbb Q^\eps_T(\dd\omega)$.

At this point we can take advantage of Theorem~\ref{th:svm}, whose applicability is ensured by the next proposition, which guarantees the existence of a unique limiting measure for the sequence $(\tilde{\mathbb Q}^\eps_T)_\eps$ and consequently for the sequence $(\mathbb Q^\eps_T)_\eps$.

\begin{proposition}\label{prop:conv-kpz}
Let $h^\eps$ be the local in time solution to~\eqref{eq:KPZtype} for $d=2,3$ and $V^\eps(t,x)=\nabla h^\eps(T-t,x)$. Then, there exists $T^\star>0$ such that for all $T\leq T^\star$, $V^\eps(t,x)$ is a ground drift according to definition~\ref{def:ground}, i.e. we have 
\begin{enumerate}
\item for $d=2$ the process $V^\eps$ converges almost surely in $C([0,T^\star],\CC^\beta(\mathbb T^2))$ for all $\beta<0$ to some element $V$.
\item for $d=3$  and all $\beta<-1/2$ the process $\CK(V^\eps)$ converges almost surely in $\ch^{\beta+2}(\mathbb T^3)$ to some element $\CV\in \cX^{\beta+2}$.
\end{enumerate}
Moreover in both cases the limit is independent of the choice of the mollifier $m$.
\end{proposition}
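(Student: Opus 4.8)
The plan is to analyse the renormalised equation~\eqref{eq:KPZtype} with the paracontrolled machinery of Sections~\ref{sec:besov}--\ref{sec:generatorequation}, produce $h^\eps$ on a time interval whose length is controlled only by a finite family of explicit stochastic terms built from $\xi^\eps$, and then transfer the $\xi$-almost sure convergence of those terms to the convergence of $V^\eps(t,\cdot)=\nabla h^\eps(T-t,\cdot)$ and of its enhancement $\mathscr K(V^\eps)$. Since $c_\eps$ is spatially constant it drops out of $\nabla h^\eps$ --- its only purpose is to turn ill-defined squares of the linear solution into Wick squares --- so the pertinent renormalisation data are the same for every admissible mollifier up to a divergent additive constant, which already accounts for the independence of the limits on $m$. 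The convergence of the stochastic terms is the content of Theorem~\ref{th:stoc}; granting it, the two statements follow from the continuity of the relevant solution and enhancement maps, together with the harmless reverse-time substitution $t\mapsto T-t$. Once $V^\eps$ (respectively $\mathscr K(V^\eps)$) converges, $V$ is a ground drift in the sense of Definition~\ref{def:ground} and Theorem~\ref{th:svm} applies.

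\noindent\textbf{Dimension two.} Here $\xi\in\CC^{-1-\kappa}$ for every $\kappa>0$ and a first-order (Da Prato--Debussche) expansion closes. Writing $Y^\eps:=\I(\xi^\eps)\in C([0,T],\CC^{1-\kappa})$ and $h^\eps=Y^\eps+v^\eps$, one finds, after subtracting the linear equation and choosing $c_\eps$ accordingly,
\[
\partial_t v^\eps=\tfrac12\Delta v^\eps+\tfrac12|\nabla Y^\eps|^{\diamond2}+\nabla Y^\eps\cdot\nabla v^\eps+\tfrac12|\nabla v^\eps|^2,\qquad v^\eps(0,\cdot)=0,
\]
with $|\nabla Y^\eps|^{\diamond2}=|\nabla Y^\eps|^2-\E|\nabla Y^\eps|^2$. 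A second-moment computation in the first and second Wiener chaos for the pair $(\nabla Y^\eps,|\nabla Y^\eps|^{\diamond2})$, together with the equivalence of moments, the Besov embedding of Proposition~\ref{proposition:Bes-emb}, and a Kolmogorov/Borel--Cantelli argument along $\eps=2^{-n}$, shows that it converges $\xi$-almost surely in $C([0,T],\CC^{-\kappa})\times C([0,T],\CC^{-2\kappa})$. Since $(-\kappa)+(1-2\kappa)>0$, the term $\nabla Y^\eps\cdot\nabla v^\eps$ is a genuine product as soon as $v^\eps\in C([0,T],\CC^{2-2\kappa})$, and a contraction argument of exactly the type of Proposition~\ref{prop:fixedpointyoung} produces a local solution $v^\eps$ on an interval $[0,T^\star]$ whose length depends only on the (convergent, hence uniformly bounded) norms of the data, with locally Lipschitz dependence on $(\nabla Y^\eps,|\nabla Y^\eps|^{\diamond2})$. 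Hence $v^\eps\to v$ and $V^\eps=\nabla Y^\eps+\nabla v^\eps\to V:=\nabla Y+\nabla v$ almost surely in $C([0,T^\star],\CC^\beta)$ for all $\beta<0$. (That one may take $T^\star=\infty$ follows from the Cole--Hopf transform reducing~\eqref{eq:KPZtype} to the linear parabolic Anderson equation~\eqref{eq:pam}, as in Remark~\ref{rem:dimension}.)

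\noindent\textbf{Dimension three.} Now $\xi\in\CC^{-3/2-\kappa}$, $Y^\eps:=\I(\xi^\eps)\in C([0,T],\CC^{1/2-\kappa})$, and even after setting $Z^\eps:=\tfrac12\I(|\nabla Y^\eps|^{\diamond2})\in C([0,T],\CC^{1-2\kappa})$ the remainder $v^\eps:=h^\eps-Y^\eps-Z^\eps$ still meets products whose spatial regularities sum slightly below zero --- such as $\nabla Y^\eps\cdot\nabla v^\eps$ and $\nabla Y^\eps\cdot\nabla Z^\eps$ --- so its equation must be solved in a space of paracontrolled distributions, with $v^\eps$ controlled by $\I(\nabla Y^\eps)$, exactly along the lines of Definition~\ref{def:paracontrolled} and Proposition~\ref{prop:fixedpointrough}. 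This is carried out in Section~\ref{section:KPZ}: it yields, on an interval $[0,T^\star]$ whose length is bounded below in terms of a finite list of renormalised stochastic terms built from $\xi^\eps$, a solution $h^\eps$ together with its paracontrolled derivative, depending continuously on that list; Theorem~\ref{th:stoc} asserts that the list converges $\xi$-almost surely in the relevant Besov norms (again via chaos moment bounds, equivalence of moments, Proposition~\ref{proposition:Bes-emb} and Kolmogorov), the constant $c_\eps$ being precisely what is needed to render the resonant square $|\nabla Y^\eps|^{\diamond2}$, and the divergent subterm it induces downstream, convergent. Continuity of the solution map and this convergence give $h^\eps\to h$, hence $V^\eps=\nabla h^\eps(T-\cdot,\cdot)\to V$ almost surely in $C([0,T^\star],\CC^{\gamma-2})$, which is the first component of $\mathscr K(V^\eps)$. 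For the second component $\big(\mathscr J^T(\partial_j V^{\eps,i})\circ V^{\eps,j}\big)_{i,j}$ one inserts the paracontrolled expansion of $V^\eps$, so that each $\partial_j V^{\eps,i}$ splits into explicit stochastic terms, a paraproduct against $\mathscr J^T$ of stochastic data, and a remainder of spatial regularity larger than $1$; applying Bony's estimates (Proposition~\ref{prop:bony}) and the commutator of Proposition~\ref{prop:comm} to $\circ\,V^{\eps,j}$ then reorganises the product into (i)~resonant products among the stochastic terms, convergent in their renormalised form by Theorem~\ref{th:stoc}; (ii)~pairings of the paracontrolled coefficient of $\nabla V^\eps$ with such renormalised resonant objects; (iii)~commutator remainders $\mathscr R(\cdot,\cdot,\cdot)$, finite precisely because the three regularities sum positively when $\beta>-\tfrac23$; and (iv)~resonant products involving the regular remainder, handled directly by Proposition~\ref{prop:bony}. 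Each summand converges in $C([0,T^\star],\CC^{2\gamma-3})$, hence $\mathscr K(V^\eps)\to\mathscr V$ in $\mathscr H^\gamma$; since every mollified $V^\eps$ is smooth, $\mathscr K(V^\eps)$ lies in the set whose $\mathscr H^\gamma$-closure is $\mathscr X^\gamma$ by Definition~\ref{def:rough}, so $\mathscr V\in\mathscr X^\gamma$ with $\mathscr V^1=V$, that is, $V$ is a ground drift. Independence of the mollifier follows by repeating the same moment estimates for differences of the stochastic terms built from two admissible mollifiers $m,m'$ (keeping a common underlying white noise and using $m(0)=m'(0)=1$): the renormalised objects have a common limit, and since all the downstream maps are continuous, so do $V$ and $\mathscr V$.

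\noindent\textbf{Where the difficulty lies.} The deterministic and probabilistic ingredients above are, taken separately, of the kind already developed in the preceding sections; the real work is the analysis of Section~\ref{section:KPZ}, and within it the control --- at the borderline regularity $\CC^{2\gamma-3}$ with $\gamma$ close to $\beta+2$ and $\beta>-\tfrac23$ --- of the renormalised resonant object hidden in $\mathscr J^T(\partial_j V^{\eps,i})\circ V^{\eps,j}$: the Wiener-chaos power counting is tight, the explosive weight $(T-t)^{\frac{\alpha-1}{2}}$ attached to the terminal time in the paracontrolled norm of Definition~\ref{def:paracontrolled} must be carried through every term, and one has to secure a lower bound on the local existence time $T^\star$ that is uniform in $\eps$, so that all the almost sure limits live on a common interval $[0,T^\star]$. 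Everything else amounts to bookkeeping around Corollary~\ref{cor}, Proposition~\ref{prop:bony} and Proposition~\ref{prop:comm}.
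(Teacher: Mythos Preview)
Your sketch is correct and follows the paper's overall strategy: solve the KPZ-type equation~\eqref{eq:KPZtype} by a Da~Prato--Debussche expansion (order one in $d=2$, order three in $d=3$) together with paracontrolled analysis, invoke Theorem~\ref{th:stoc} for the almost sure convergence of the finitely many stochastic objects, and then read off the convergence of $V^\eps$ (resp.\ $\mathscr K(V^\eps)$) from the continuity of the solution map. The $d=2$ part matches the paper exactly (cf.\ Remark~\ref{rem:two}).

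For the second component of $\mathscr K(V^\eps)$ in $d=3$, however, the paper's argument is noticeably simpler than yours. You propose to feed the full paracontrolled expansion of $V^\eps$ into $\mathscr J^T(\partial_j V^{\eps,i})\circ V^{\eps,j}$ and then organise the outcome via Propositions~\ref{prop:bony} and~\ref{prop:comm} into four types of terms. The paper instead uses only the coarser splitting $h^\eps=X^\eps+h^{\eps,1}$ with $h^{\eps,1}=X^{\eps,\<tree12>}+2X^{\eps,\<tree122>}+v^\eps\in C([0,T^\star],\CC^{1^-})$; since $\nabla h^{\eps,1}\in\CC^{0^-}$ and $\mathscr J^T(\partial_i\partial_j h^{\eps,1})\in\CC^{1^-}$, three of the four bilinear pieces in
\[
\mathscr J^T(\partial_i\partial_j h^\eps)\circ\partial_j h^\eps
=\sum_{a,b\in\{X^\eps,\,h^{\eps,1}\}}\mathscr J^T(\partial_i\partial_j a)\circ\partial_j b
\]
are already well-defined by Bony's estimates, and the single ill-defined piece $\mathscr J^T(\partial_i\partial_j X^\eps)\circ\partial_j X^\eps=\partial_i\mathscr J^T(\partial_j X^\eps)\circ\partial_j X^\eps$ is, after the harmless time reversal, exactly the object $\nabla Q^\eps\circ\nabla X^\eps$ that is part of the enhanced data $\Xi$ in Definition~\ref{def:rough distribution} and is handled directly by Theorem~\ref{th:stoc}. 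No commutator of the type $\mathscr R(\cdot,\cdot,\cdot)$ is needed. Your route works, but it carries around the paracontrolled structure of $v^\eps$ where the cruder regularity $h^{\eps,1}\in\CC^{1^-}$ already suffices; the paper's shortcut buys you a two-line argument in place of the four-case decomposition, and explains transparently why only one extra stochastic object ($\nabla Q\circ\nabla X$) is required beyond what the KPZ analysis already produced.
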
 

\begin{remark}
Notice that  we are applying Theorem~\ref{th:svm} to distributions defined on the torus and not on the full space. This is completely harmless since the space $\CC^\gamma(\mathbb T^d)$ can be seen as the space of periodic distributions lying in $\CC^\gamma$ (see also~\cite[Appendix A]{GIP15} for a discussion on this aspect).
\end{remark}

\noindent Let us stress the fact that the proof of Proposition~\ref{prop:conv-kpz} boils down to a well-posedness result for the equation 
\begin{equation}\label{eq:KPZtype1}
\partial_th=\frac{1}{2}\Delta h+\frac{1}{2}|\nabla h|^2+\xi,\qquad\quad h(0,x)=0.
\end{equation}
In the one dimensional case with $\xi$ a space-time white noise, the previous is nothing but the celebrated Kardar-Parisi-Zhang equation~\cite{KPZ}, which was successfully studied by M.Hairer in~\cite{hairer_solving_2013} and subsequently by M. Gubinelli and N.Perkowski in~\cite{GP}. The regularity issues one encounters when dealing with the three dimensional version are morally the same these authors had to face and the techniques we will exploit are somewhat similar to theirs (especially to~\cite{GP}). For the sake of completeness, we will prove Proposition~\ref{prop:conv-kpz} pointing out the difficulties one has to overcome and illustrating the main steps one needs to undertake in order to solve~\eqref{eq:KPZtype1}, still keeping it as concise as possible and referring the interested reader to the quoted papers.


\section{A KPZ-type equation driven by a purely spatial white noise}\label{section:KPZ}

The aim of this section is to prove well-posedness of the KPZ-type equation, introduced in~\eqref{eq:KPZtype1} to make sense of the polymer measure with white-noise potential. 
We will focus on the three-dimensional case, since in dimension $2$ the result follows by analogous, but simpler, arguments.  

Let us consider the case of non-zero initial condition, $h_0$, and  write~\eqref{eq:KPZtype1} in its mild formulation
\begin{equation}\label{KPZmild}
h(t)=P_t h_0+ \I(|\nabla h|^2 )(t)+\I(\xi)_t
\end{equation}
where $P_t\eqdef e^{\frac{1}{2}t\Delta}$ is the heat flow, for a function $f$ on $(0,T]\times\TT^3$, $\I(f)(t)\eqdef \int_0^t P_{t-s}f(s)\dd s$ and $\xi$ is the usual space white noise on $\TT^3$, i.e. a centered Gaussian random field whose covariance function is formally given as in~\eqref{e:WN}. 

The problem with the previous equation lies in the fact that, since as a random distribution, $\xi\in\CC^{\theta}(\mathbb T^d)$ for $\theta<-\frac{d}{2}$ (which in $d=2$ means $\theta<-1$ while in $d=3$, $\theta<-\frac{3}{2}$) stantard Schauder's estimates suggest that the spatial regularity of $h$ cannot be better than $\theta+2$ and therefore the non-linearity in~\eqref{KPZmild}, for both $d=2$ and $3$, is not well-defined. 
Now, let us point out that the term determining the regularity of $h$ is  $\I(\xi)$, so maybe, upon subtracting it to the potential solution, what remains is more regular. 
In other words, one defines $h_1\eqdef h- \I(\xi)$, derives the equation it should solve and, as before, guesses its regularity. 
For example, setting $X=\I(\xi)$, $h_1$ should satisfy
\[
h_1(t)=P_t h_0+\I(|\nabla X|^2)(t)+\I(\nabla X\cdot\nabla h_1 + |\nabla h_1|^2)(t)
\]
and its regularity should be as the one of $\I(|\nabla X|^2)$. {\it If} it were well-posed, this last term would be $2\theta+4$-H\"older in space which is strictly greater than $\theta+2$ so that indeed $h_1$ is more regular than $h$. 
While in dimension $2$ this is enough (given that $X$ and $\I(|\nabla X|^2)$ can be constructed and belong to the correct Besov-H\"older space, all the other terms satisfy Bony's condition), it is still not sufficient in $d=3$ and so, we proceed further in the expansion. 

The problem is that after subtracting a finite number of terms, there will be no more gain in regularity and something else is needed in order to define the ill-posed product and consequently solve the equation.  
This is exactly the point in which the paracontrolled approach, as we will see in what follows, enters the game. 
\newline

Now that we have given a heuristic idea of what is going on, let us be more formal. 
We begin by defining the objects that will appear in our expansion. Let $\eta$ be a smooth function and set 
\begin{equation}\label{def:stochterms}
\begin{split}
X_t(\eta) \eqdef \I(\eta)(t),\qquad  X_t^{\<tree12>}(\eta) \eqdef  \I(|\nabla X|^2)&(t), \qquad X^{\<tree122>}_t(\eta) \eqdef  \I(\nabla X^{\<tree12>}\cdot \nabla X)(t),\\
X_t^{\<tree1222>}(\eta)\eqdef  \I(\nabla X^{\<tree122>}\cdot \nabla X)(t),\quad&\quad  X_t^{\<tree124>}(\eta) \eqdef  \I(|\nabla X^{\<tree12>}|^2)(t).
\end{split}
\end{equation} 
As announced before, in case $\eta$ is the space white noise, the previous stochastic processes are not {\it analitically} well-defined and we will have to exploit stochastic calculus tools in order to make sense of them and prove that they satisfy certain regularity requirements. 

Now, let $h$ be the solution of~\eqref{KPZmild} driven by $\eta$ and $v$ be given by
\[
v\eqdef h-X(\eta)-X^{\<tree12>}(\eta) - 2X^{\<tree122>}(\eta)\,.
\]
Plugging this expression back into~\eqref{KPZmild}, we see that $v$ solves
\begin{equation}\label{KPZmilduQ}
v(t) = P_t h_0 + 4 X_t^{\<tree1222>}(\eta) + 2\, \I\Big(\nabla v \cdot \nabla X(\eta)\Big) (t)+R^v(t)
\end{equation}
where $R^v$ is defined as
\begin{equation}\label{remainder1}
R^v(t)\eqdef X_t^{\<tree124>}(\eta)+\I\Big(2 \nabla X^{\<tree12>}(\eta) \cdot \nabla \big(2 X^{\<tree122>}(\eta) + v\big) + \big | \nabla \big(2 X^{\<tree122>}(\eta) + v\big)\big|^2\Big)(t)
\end{equation}
At this point, we will split the analysis of the equation in two distinct modules. 
On one side, with purely analytical arguments, we will identify a suitable subspace of the space of distributions, depending on the processes defined above, for which it is possible to make sense of the ill-posed operations in~\eqref{KPZmilduQ} and formulate a fixed point map that is continuous in these data. 
On the other, we will exploit probabilistic techniques to construct such processes starting with a white noise $\xi$ and prove they have the expected regularity, through a regularization procedure.

\begin{notation*}
From now on, all the functions and distributions we will consider will live on the $d$-dimensional torus. Since no confusion can occur, we will indicate the function spaces with the same notations introduced in Section~\ref{sec:FunctionSpaces}, but the domain will not be $\R^d$ but $\TT^d$. 
\end{notation*}

\subsection{Analytic part}

We begin by specifying the space in which our stochastic processes live. 

\begin{definition}[Rough Distribution]\label{def:rough distribution}
Let $\varrho,r<\frac{1}{2}$ be such that $\varrho+2r<\frac{1}{2}$. For $(a,b,\eta)\in\R^2\times C_T\CC^2$ and $t\leq T$ set $\XX(\eta,a,b)$ to be
\begin{equation}\label{e:6-uple}
\XX_t(\eta,a,b)=\Big(X_t, X^{\<tree12>}_t-at, X^{\<tree122>}_t, X^{\<tree1222>}_t, X^{\<tree124>}_t-bt, Q\circ \nabla X(t)\Big)(\eta)\,,
\end{equation}
where $X^{\<tree12>}$, $X^{\<tree122>}$, $X^{\<tree1222>}$, $X^{\<tree124>}$ are given by~\eqref{def:stochterms}, and 
$$
Q(\eta)\eqdef \I(\nabla X)\qquad\text{and}\qquad\nabla Q\circ\nabla X(\eta)=(\partial_i (Q)^j\circ\partial_iX)_{i,j=1,2,3}\,.
$$
We define the space $\XR^{\varrho,r}$ of {\it rough distributions} as 
$$
\XR^{\varrho,r}=\text{cl}_{\CH^{\varrho,r}}\left\{\XX(\eta,a,b), \quad (a,b,\eta)\in\mathbb R^2\times C([0,T],\mathscr C^2(\mathbb T^3))\right\}
$$
where $\text{cl}_{\CH^{\varrho,r}}\{\cdot\}$ denotes the closure of the set in brackets with respect to the topology of $\CH^{\varrho,r}$ and the space $ \CH^{\varrho,r}=  C^{r}_T\CC^\varrho\times  C^r_T\CC^{2\varrho}\times C^r_T\CC^{3\varrho}\times  C^r_T\CC^{\varrho+1} \times C^r_T\CC^{4\varrho}\times  C^r_T\CC^{2\varrho-1}_{\R^3}$ equipped with its usual norm.  
We will denote by $\XX$ a generic $6$-uple given as in~\ref{e:6-uple} belonging to this space. 
Moreover if $\eta\in C_T\CC^\varrho$ coincides with the first component of $\XX\in\XR^{\varrho,r}$ we will say that $\XX$ is a enhancement (or lift) of $\eta$. 
\end{definition}

\begin{remark}
The reason why in~\eqref{e:6-uple} we had to add an extra term to the ones introduced in~\eqref{def:stochterms} will soon be clarified. 
Intuitively, this is the term we will need to define the ill-posed product between the gradient of the expected solution $v$ of~\eqref{KPZmilduQ} and the gradient of $X$. 
This is very similar to what we have done for the generator equation in Section~\ref{sec:solv-Generator}. 
\end{remark}

\begin{remark}
It is important to notice that the two constants $a,b$ appearing in Definition~\ref{def:rough distribution} play the role of renormalization constants. 
As we said at the beginning of Section~\ref{section:KPZ}, if $\xi$ denotes the three dimensional space white noise, then there is simply no hope to define some of the terms of $\XX(\xi)$ as the limit of smooth approximations. However, we will see that, upon subtracting suitable diverging constants, it is still possible to obtain a nontrivial limit. 
To exemplify, for $X^{\<tree12>}$, if $X^\eps$ a mollification of $X$, then $\I(|\nabla X^\eps|^2)(t)$ does not converge, but there exists a diverging sequence of $c_\eps$ such that $\mathcal I(|\nabla X^\eps|^2)(t)-c_\eps t$ indeed does (see Theorem~\ref{th:stoc} for a complete proof).  
\end{remark}

\begin{remark}
One of the main differences with the KPZ equation studied in~\cite{hairer_solving_2013} and~\cite{GP} is the stochastic term $X^{\<tree1222>}$. 
Indeed, while in the latter case this term requires a non trivial renormalization, in our it does not (see again Theorem~\ref{th:stoc}).
\end{remark}

Given $\XX\in\XR^{\varrho,r}$, $\varrho<\frac{1}{2}$, the goal of this section is to setup a fixed point argument for equation~\eqref{KPZmilduQ}. 
Now, from the definition of $\XX$ we see that the expected spatial regularity of the solution $v$ should be $\varrho+1$ and not better so that all the terms are well-defined, thanks to Proposition~\ref{prop:bony}, with the exception of $\nabla v\cdot \nabla X$, and, in particular, the resonant part of it. This difficulty can be handled in the same way as in Section~\ref{subsec:geneqrough}, namely we will exploit once more the idea of paracontrolled distributions introduced in~\cite{GIP15}.

\begin{definition}[Paracontrolled Distributions]\label{paracontrolled}
Let $\frac{2}{5}<\alpha<\frac{1}{2}$. For $Q\in C_T\CC^{\alpha+1}$, we define the space of paracontrolled distributions $\D^{\alpha}_{Q,T}$ as the set of couple of functions $(v,v')\in C_T\CC^{\alpha+1}\times C_T\CC^{\alpha}_{\R^3}$ such that
\[
v^\sharp(t)\eqdef v(t)-(v'\prec Q)(t)\in \CC^{4\alpha}
\]
for all $0\leq t\leq T$. We equip $\D^{\alpha}_{Q,T}$ with the norm 
\[
\|(v,v')\|_{\D^{\alpha}_{Q,T}} = \|v\|_1 + \|v'\|_2+\|v^\sharp\|_3
\]
where, for $\beta\in(0,3\alpha-1)$, $\gamma\in(2\alpha,\alpha+\frac{1}{2})$, $\delta\in(2\alpha-\frac{1}{2},\alpha)$, the norms $\|\cdot\|_i$, $i=1,2,3$ are defined by
\small\begin{align*}
&\|v\|_1\eqdef\|v\|_{1,x}+\|v\|_{1,T}\eqdef\sup_{t\in[0,T]}  t^{\frac{\alpha}{2}} \|v(t)\|_{3\alpha} + \sup_{0\leq s<t\leq T} s^{\frac{1+\delta-\alpha}{2}}\frac{\|\nabla v(t)-\nabla v(s)\|_{L^\infty}}{|t-s|^{\frac{\delta}{2}}}\\
&\|v'\|_2\eqdef\sup_{t\in[0,T]}t^{\frac{\gamma}{2}} \|v'(t)\|_{3\alpha-1}\qquad\qquad\qquad\qquad\quad\|v^\sharp\|_3\eqdef\sup_{t\in[0,T]}t^{\frac{\beta+1}{2}} \|v^\sharp(t)\|_{\alpha+\beta +1}
\end{align*}\normalsize
For $(u,u')\in\D^{\alpha}_{Q,T}$, we say that $u$ is {\it paracontrolled by $Q$} and we endow $\D^{\alpha}_{Q,T}$ with the metric
\[
d_{\D^{\alpha}_{Q,T}}\left((v_1,v_1'),(v_2,v_2')\right)\eqdef \|v_1-v_2\|_1 + \|v_1'-v_2'\|_2+\|v_1^\sharp-v_2^\sharp\|_3
\] 
for $(v_1,v_1'),(v_2,v_2')\in\D^{\alpha}_{Q,T}$. 
\end{definition}

At this point let $(v,v')\in\D^{\alpha}_{Q,T}$. Then, upon decomposing the product in the paraproduct and resonant part, and exploiting the paracontrolled structure of $v$, we can write $\nabla X\circ \nabla v$ as
\begin{align*}
\nabla X\circ \nabla v&=\nabla X \circ \nabla(v'\prec Q)+ \nabla X \circ \nabla v^\sharp\\
&=\nabla X \circ (\nabla v'\prec Q)+\nabla X \circ(v'\prec \nabla Q)+\nabla X \circ \nabla v^\sharp
\end{align*}
where, thanks to Bony's estimates and since $\alpha>\frac{2}{5}$, all the terms are well-defined apart from the second summand. But now thanks to the commutator lemma, Proposition~\ref{prop:comm}, $\nabla X\circ \nabla v$ equals
\begin{equation}\label{Prod}
 \underbrace{\nabla X \circ (\nabla v'\prec Q)}_{5\alpha-2}+\underbrace{v' (\nabla X \circ \nabla Q)}_{2\alpha-1} +\underbrace{ \mathscr R(v',\nabla X,\nabla Q)}_{2\alpha-1}+\underbrace{\nabla X \circ \nabla v^\sharp}_{5\alpha-2}
\end{equation}
Provided we can make sense of $\nabla Q\circ\nabla X$ through other means, the resonant term is now well posed and has spatial regularity given by $2\alpha-1$. In the next proposition, we will derive suitable estimates for the convolution of the latter with the heat kernel, which is exactly what we need in the proof of the fixed point (see Proposition~\ref{p:KPZbounds}).

\begin{proposition}\label{resonant}
Let $\frac{2}{5}<\alpha<\varrho<\frac{1}{2}$, $\XX\in\XR^{\varrho,r}$, $v\in\D^{\alpha}_{Q,T}$ and assume $\nabla Q\circ \nabla X$ is well-defined and belongs to $C_T\CC^{2\alpha-1}$. Then $\nabla X\circ \nabla v$ is well-posed and is given by the expansion in~\eqref{Prod}. Moreover, when convolved with the heat kernel, it satisfies the following estimate
\small\begin{equation}\label{res}
\Big\|\mathcal I\Big(\nabla X\circ \nabla v\Big)\Big\|_1+\Big\|\mathcal I\Big(\nabla X\circ \nabla v\Big)\Big\|_3\lesssim  T^\vartheta\|\XX\|_{\XR^{\varrho,r}} \Big(1+\|\XX\|_{\XR^{\varrho,r}}\Big) \Big(\|v'\|_2+\|v^\sharp\|_3\Big)  
\end{equation}\normalsize
where $\vartheta=\frac{1+\delta-\alpha-\gamma}{2}>0$ and $\|\cdot\|_i$, for $i=1,2,3$, are defined as in Definition~\ref{paracontrolled}.
\end{proposition}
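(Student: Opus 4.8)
The plan is to mirror the treatment of the generator equation in Section~\ref{subsec:geneqrough}: first establish the algebraic expansion~\eqref{Prod} of the resonant product $\nabla X\circ\nabla v$, then estimate each of its four pieces in a suitably weighted Besov norm using only $\|v'\|_2$ and $\|v^\sharp\|_3$, and finally apply $\I$ together with the Schauder estimates of Corollary~\ref{cor} to recover the seminorms $\|\cdot\|_1$ and $\|\cdot\|_3$ appearing in~\eqref{res}.

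\emph{Deriving the expansion~\eqref{Prod}.} Insert $v=v'\prec Q+v^\sharp$ and use the Leibniz rule for the paraproduct, $\nabla(v'\prec Q)=\nabla v'\prec Q+v'\prec\nabla Q$, to get $\nabla X\circ\nabla v=\nabla X\circ(\nabla v'\prec Q)+\nabla X\circ(v'\prec\nabla Q)+\nabla X\circ\nabla v^\sharp$. The first and last summands are immediately meaningful by Bony's resonant estimate (Proposition~\ref{prop:bony}): with $\nabla X\in\CC^{\varrho-1}$, $\nabla v'\prec Q\in\CC^{4\alpha-1}$ and (choosing $\beta$ close to $3\alpha-1$) $\nabla v^\sharp\in\CC^{\alpha+\beta}$, one checks $(\varrho-1)+(4\alpha-1)>0$ and $(\varrho-1)+(\alpha+\beta)>0$, which is precisely where the standing conditions $\tfrac25<\alpha<\varrho<\tfrac12$ are genuinely used. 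The middle, ill-posed, summand is rewritten via the commutator, Proposition~\ref{prop:comm}, applied with $v'\in\CC^{3\alpha-1}$ (note $3\alpha-1<1$), $\nabla Q\in\CC^\alpha$ and $\nabla X\in\CC^{\varrho-1}$ — for which $\alpha+(\varrho-1)<0$ and $(3\alpha-1)+\alpha+(\varrho-1)>0$ — producing the term $v'\,(\nabla X\circ\nabla Q)$, where the resonant product $\nabla X\circ\nabla Q$ is \emph{given} as a component of $\X\in\XR^\varrho$, plus a remainder $\mathscr R(v',\nabla X,\nabla Q)$ controlled by Proposition~\ref{prop:comm}. Collecting the four terms yields exactly~\eqref{Prod}; reading off the regularities, the worst one is $2\alpha-1$, so $\nabla X\circ\nabla v\in C([0,T],\CC^{2\alpha-1})$.

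\emph{Weighted estimates and Schauder.} For each summand in~\eqref{Prod} I bound its $\CC^{2\alpha-1}$-norm (or its better intrinsic regularity) at time $t$ using Propositions~\ref{prop:bony} and~\ref{prop:comm} together with the quantities controlled by $\|\X\|_{\XR^\varrho}$, namely $\|X(t)\|_\varrho$, $\|Q^X(t)\|_{\alpha+1}$ and $\|(\nabla X\circ\nabla Q)(t)\|_{2\alpha-1}$. Since only $\|v'\|_2$ and $\|v^\sharp\|_3$ may appear on the right of~\eqref{res}, the three $v'$-built terms carry the explosive weight $t^{-\gamma/2}$ coming from $\|v'\|_2=\sup_t t^{\gamma/2}\|v'(t)\|_{3\alpha-1}$ (and, for $v'\prec\nabla Q$, the fact that $v'\in L^\infty$ because $3\alpha-1>0$), while $\nabla X\circ\nabla v^\sharp$ must be treated through $\nabla v^\sharp\in\CC^{\alpha+\beta}$ with the weight $t^{-(\beta+1)/2}$ inherited from $\|v^\sharp\|_3$. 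One thus obtains $\|(\nabla X\circ\nabla v)(t)\|_{2\alpha-1}\lesssim\big(t^{-\gamma/2}\|v'\|_2+t^{-(\beta+1)/2}\|v^\sharp\|_3\big)\|\X\|_{\XR^\varrho}(1+\|\X\|_{\XR^\varrho})$, and similarly with $2\alpha-2$ in place of $2\alpha-1$ after one more spatial derivative. Feeding these bounds into Corollary~\ref{cor}, via $\nabla\I(\cdot)=\I(\nabla\,\cdot)$: part~1, with target regularities $3\alpha$ (weight $t^{\alpha/2}$) and $\alpha+\beta+1$ (weight $t^{(\beta+1)/2}$), gives the $\|\cdot\|_{1,x}$ and $\|\cdot\|_3$ contributions; part~2, with $L^\infty$-target for the gradient, Hölder exponent $\delta/2$ and weight $s^{(1+\delta-\alpha)/2}$, gives the $\|\cdot\|_{1,T}$ contribution. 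In each application the hypotheses of Corollary~\ref{cor} are verified against $\tfrac25<\alpha<\varrho<\tfrac12$, $\beta\in(0,3\alpha-1)$, $\gamma\in(2\alpha,\alpha+\tfrac12)$, $\delta\in(2\alpha-\tfrac12,\alpha)$, and each contribution comes with a strictly positive power of $T$; the minimum of these powers is $\tfrac{1+\delta-\alpha-\gamma}{2}$, attained by the time-Hölder estimate of the $v'$-paracontrolled pieces, where the weight $s^{(1+\delta-\alpha)/2}$ and the exponent $\delta/2$ exactly absorb the weight $t^{-\gamma/2}$ of $\|v'\|_2$; positivity of this exponent is nothing but $\delta>2\alpha-\tfrac12$ together with $\gamma<\alpha+\tfrac12$. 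This gives~\eqref{res}.

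\emph{Main obstacle.} The substance is the bookkeeping in the last two steps: one must track, term by term, both the spatial regularities — several resonance conditions hold only \emph{marginally} and really require $\alpha$, $\varrho$ near $\tfrac12$ — and the explosive time-weights, while being disciplined enough never to invoke the $\|v\|_1$-part of the $\D_T^Q$-norm, since only $\|v'\|_2$ and $\|v^\sharp\|_3$ are permitted in~\eqref{res}. It is exactly this restriction that forces the $v^\sharp$-contribution to go through the weaker, $t^{-(\beta+1)/2}$-weighted $\CC^{\alpha+\beta+1}$ bound and makes the final gain in $T$ come out only as $\vartheta=\tfrac{1+\delta-\alpha-\gamma}{2}$.
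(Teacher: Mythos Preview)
Your proposal is correct and follows essentially the same route as the paper: justify the expansion~\eqref{Prod} via the paracontrolled ansatz together with the commutator Proposition~\ref{prop:comm}, bound each of the four pieces in its natural Besov norm with the weights coming from $\|v'\|_2$ and $\|v^\sharp\|_3$, and then feed these into Corollary~\ref{cor} (part~1 for $\|\cdot\|_{1,x}$ and $\|\cdot\|_3$, part~2 for $\|\cdot\|_{1,T}$) to extract the powers of $T$. The paper likewise identifies $\vartheta=\tfrac{1+\delta-\alpha-\gamma}{2}$ as the smallest of the resulting exponents, arising precisely from the $\|\cdot\|_{1,T}$-estimate of the $v'$-terms, after checking that it is dominated by the gains $\tfrac{\alpha-\beta}{2}$ and $\tfrac{4\alpha-\beta-1}{2}$ obtained for $\|\cdot\|_{1,x}$ and $\|\cdot\|_3$.
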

\begin{proof} The argument above justifies the expansion we made and guarantees the well-posedness of the resonant term. In order to obtain the required bounds it is sufficient to apply Corollary \ref{cor}, Bony's estimates (Proposition \ref{prop:bony}) and the commutator Lemma (Proposition \ref{prop:comm}). Indeed, its $\|\cdot\|_{1,x}$-norm is bounded by
\small\begin{align*}
&T^{\frac{4\alpha-\gamma}{2}} \sup_s s^\frac{\gamma}{2}\|\nabla X_s \circ (\nabla v_s'\prec Q_s)\|_{5\alpha-2} +T^{\frac{\alpha+1-\gamma}{2}}\sup_s s^\frac{\gamma}{2} \|v'(s)(\nabla X \circ \nabla Q)_s\|_{2\alpha-1}\\
&+T^{\frac{\alpha+1-\gamma}{2}}\sup_s s^\frac{\gamma}{2} \|C(v',\nabla X,\nabla Q)(s)\|_{2\alpha-1} +T^\frac{\alpha-\beta}{2} \sup_s s^\frac{\beta+1}{2}\|\nabla X_s\circ \nabla v^\sharp(s)\|_{5\alpha-2}\\
&\lesssim T^\frac{\alpha-\beta}{2}\|\XX\|_{\mathcal{X}^\varrho} \Big(1+\|\XX\|_{\mathcal{X}^\varrho}\Big)\sup_s \Big(s^\frac{\gamma}{2}\|v'(s)\|_{3\alpha-1} + s^\frac{\beta+1}{2}\|v^\sharp(s)\|_{\alpha+\beta+1}\Big)
\end{align*}\normalsize
and its $\|\cdot\|_{3}$-norm by
\small\begin{align*}
&T^{\frac{4\alpha-\gamma}{2}} \sup_s s^\frac{\gamma}{2}\|\nabla X_s \circ (\nabla v_s'\prec Q_s)\|_{5\alpha-2} +T^{\frac{\alpha+1-\gamma}{2}}\sup_s s^\frac{\gamma}{2} \|v'(s)(\nabla X \circ \nabla Q)_s\|_{2\alpha-1}\\
&+T^{\frac{\alpha+1-\gamma}{2}}\sup_s s^\frac{\gamma}{2} \|C(v',\nabla X,\nabla Q)(s)\|_{2\alpha-1} +T^\frac{4\alpha-\beta-1}{2} \sup_s s^\frac{\beta+1}{2}\|\nabla X_s\circ \nabla v^\sharp(s)\|_{5\alpha-2}\\
&\lesssim T^\frac{4\alpha-\beta-1}{2}\|\XX\|_{\mathcal{X}^{\varrho,0}} \Big(1+\|\XX\|_{\mathcal{X}^{\varrho,0}}\Big)\sup_s \Big(s^\frac{\gamma}{2}\|u'(s)\|_{3\alpha-1} + s^\frac{\beta+1}{2}\|u^\sharp(s)\|_{\alpha+\beta+1}\Big)
\end{align*}\normalsize
where, in both cases, the first inequality follows by Corollary \ref{cor} part 1, and the second by Propositions \ref{prop:bony} and \ref{prop:comm}. 
As before, the $\|\cdot\|_{1,T}$-norm is less or equal to
\small\begin{align*}
&T^{\frac{1+\delta-\alpha-\gamma}{2}}\bigg( \sup_s s^\frac{\gamma}{2}\|\nabla X_s \circ (\nabla v_s'\prec Q_s)\|_{5\alpha-2} +\sup_s s^\frac{\gamma}{2} \|v'(s)(\nabla X \circ \nabla Q)_s\|_{2\alpha-1}\\
&+\sup_s s^\frac{\gamma}{2} \|C(v',\nabla X,\nabla Q)(s)\|_{2\alpha-1} + \sup_s s^\frac{\beta+1}{2}\|\nabla X_s\circ \nabla v^\sharp(s)\|_{5\alpha-2}\bigg)\\
&\lesssim T^\frac{1+\delta-\alpha-\gamma}{2}\|\XX\|_{\mathcal{X}^{\varrho,0}}\Big(1+\|\XX\|_{\mathcal{X}^{\varrho,0}}\Big)\sup_s \Big(s^\frac{\gamma}{2}\|v'(s)\|_{3\alpha-1} + s^\frac{\beta+1}{2}\|v^\sharp(s)\|_{\alpha+\beta+1}\Big)
\end{align*}\normalsize
but we apply the second part of Corollary \ref{cor} instead of the first. Since $\frac{1+\delta-\alpha-\gamma}{2}<\min\{\frac{4\alpha-\beta-1}{2},\frac{\alpha-\beta}{2}\}$ the conclusion follows.
\end{proof}

At this point we need to indentify $v'$, $Q$ and $v^\sharp$ so that we can establish a fixed point map in the space of paracontrolled distributions. To do so, let $(v,v')\in\D^{\alpha}_{Q,T}$ and notice that $v$ solves~\eqref{KPZmilduQ} if and only if $v^\sharp$ solves
\begin{align*}
v^\sharp(t) = P_t (u_0-v'\prec Q(0)) &+ \mathcal I\Big(\nabla\big(4X^{\<tree122>}+2 v) \prec \nabla X\Big) (t)-v'\prec Q(t)\\
&+2\, \mathcal I\Big(\nabla v \circ \nabla X\Big)(t)\\
&+ R^{v}(t)
\end{align*}
where $R^v$ was introduced in~\eqref{remainder1}. 
Now, we expect $v^\sharp$ to have spatial regularity greater than the one of $v$ but all the terms in the first line, not involving the initial condition have regularity $\alpha+1$ and not better. 
The point here is to take advantage of the difference and prove it is more regular than each of its summands. 
As the next Proposition shows, this is indeed the case upon choosing $v'$ and $Q$ wisely.

\begin{proposition}\label{ParaSchauder}
Let $\alpha,\,\beta,\,\gamma$ and $\delta$ be as in Definition \ref{paracontrolled}. Let $\XX\in\XR^{\varrho,r}$, for $\frac{2}{5}<\alpha<\varrho<\frac{1}{2}$, and $f$ be such that
\[
\|f\|_\star= \|f\|_{\star,T}+\|f\|_2\eqdef\|f\|_{C_{1+\delta-\alpha,T}^{3\alpha-1}L^\infty}+\|f'\|_{C_{\gamma,T}\CC^{3\alpha-1}_{\R^3}}<\infty
\]
then
\begin{equation}\label{vsharp}
\|\I(f\prec\nabla X)-f\prec\I(\nabla X)\|_3\lesssim T^\frac{4\alpha-1-\delta}{2}\|f\|_\star \|X\|_\varrho
\end{equation}
\end{proposition}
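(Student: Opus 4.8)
The plan is to imitate, essentially verbatim, the proof of Lemma~\ref{lemma:comm-sc}; the only genuine difference is that the target norm $\|\cdot\|_3$ carries the explosive time weight $t^{\frac{\beta+1}{2}}$, so one must keep careful track of the powers of $t$. Writing $\I(g)(t)=\int_0^t\PH_{t-s}g(s)\,\dd s$ and using linearity of the paraproduct, I would split
\begin{align*}
\I(f\prec\nabla X)(t)-f(t)\prec\I(\nabla X)(t)&=\int_0^t\Big(\PH_{t-s}\big(f(s)\prec\nabla X(s)\big)-f(s)\prec\PH_{t-s}\nabla X(s)\Big)\,\dd s\\
&\quad+\int_0^t\big(f(s)-f(t)\big)\prec\PH_{t-s}\nabla X(s)\,\dd s=:J_1(t)+J_2(t),
\end{align*}
and reduce the claim to bounding $\sup_{t\le T}t^{\frac{\beta+1}{2}}\|J_i(t)\|_{\alpha+\beta+1}$ for $i=1,2$. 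Everywhere I would use $\|\nabla X(s)\|_{\alpha-1}\lesssim\|X(s)\|_\alpha\lesssim\|X\|_\varrho$ (via the embedding $\CC^\varrho\hookrightarrow\CC^\alpha$, since $\alpha<\varrho$), $\|f(s)\|_{3\alpha-1}\lesssim s^{-\gamma/2}\|f\|_2$, and, for $s<t$, $\|f(s)-f(t)\|_{L^\infty}\lesssim s^{-\frac{1+\delta-\alpha}{2}}|t-s|^{\frac{3\alpha-1}{2}}\|f\|_{\star,T}$.

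For $J_1$, I would apply the heat-flow commutator estimate~\eqref{commSchauder} of Proposition~\ref{prop:Schauder} to the pair $\big(f(s),\nabla X(s)\big)\in\CC^{3\alpha-1}\times\CC^{\alpha-1}$ — admissible since $3\alpha-1<1$ — with smoothing exponent $\theta=\tfrac12(\beta-3\alpha+3)$ chosen so that the integrand lands precisely in $\CC^{\alpha+\beta+1}$. This yields $\|J_1(t)\|_{\alpha+\beta+1}\lesssim\|f\|_2\|X\|_\varrho\int_0^t(t-s)^{-\theta}s^{-\gamma/2}\,\dd s$. The constraint $\beta<3\alpha-1$ from Definition~\ref{paracontrolled} forces $\theta<1$, and $\gamma<1$ makes the integral converge at $0$; evaluating it as $\simeq t^{1-\theta-\gamma/2}$ and multiplying by $t^{\frac{\beta+1}{2}}$ leaves a power $t^{\frac{3\alpha-\gamma}{2}}$, which dominates $t^{\frac{4\alpha-1-\delta}{2}}$ because $\delta+1>\alpha+\gamma$ (a consequence of $\delta>2\alpha-\tfrac12$ and $\gamma<\alpha+\tfrac12$).

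For $J_2$, I would estimate the paraproduct by the $L^\infty$-form of Bony's estimate (Proposition~\ref{prop:bony}), $\|(f(s)-f(t))\prec\PH_{t-s}\nabla X(s)\|_{\alpha+\beta+1}\lesssim\|f(s)-f(t)\|_{L^\infty}\|\PH_{t-s}\nabla X(s)\|_{\alpha+\beta+1}$, then use Schauder to gain regularity, $\|\PH_{t-s}\nabla X(s)\|_{\alpha+\beta+1}\lesssim(t-s)^{-\frac{\beta+2}{2}}\|X\|_\varrho$. Inserting the time-increment bound for $f$, the $(t-s)$-exponent $\tfrac{3\alpha-1}{2}-\tfrac{\beta+2}{2}$ is $>-1$ precisely because $\beta<3\alpha-1$, and the $s$-exponent $-\tfrac{1+\delta-\alpha}{2}$ is $>-1$ since $\delta<\alpha+1$; computing the resulting Beta-integral and multiplying by $t^{\frac{\beta+1}{2}}$ gives exactly $t^{\frac{\beta+1}{2}}\|J_2(t)\|_{\alpha+\beta+1}\lesssim t^{\frac{4\alpha-1-\delta}{2}}\|f\|_{\star,T}\|X\|_\varrho$.

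Summing the two bounds and recalling $T\le1$ (so the larger power produced by $J_1$ is absorbed into $T^{\frac{4\alpha-1-\delta}{2}}$) yields~\eqref{vsharp}. The argument is essentially routine; the one point that genuinely requires care is verifying that, under the parameter constraints of Definition~\ref{paracontrolled} together with $\alpha<\varrho$, all the time-integrals converge — in particular that the $J_1$-integral does not diverge at the upper endpoint, for which $\beta<3\alpha-1$ (equivalently $\theta<1$) is exactly the condition needed — and that the accumulated powers of $T$ are never smaller than $\tfrac{4\alpha-1-\delta}{2}$.
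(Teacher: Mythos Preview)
Your proof is correct and follows essentially the same approach as the paper's. The only cosmetic difference is the precise way you split the commutator: you use the decomposition of Lemma~\ref{lemma:comm-sc} (commutator at the running time $s$ plus increment term with $\PH_{t-s}$ acting on $\nabla X$ alone), whereas the paper freezes $f$ at the fixed endpoint $t$ in the commutator piece and applies $\PH_{t-s}$ to the full paraproduct in the increment piece; both splits give the same integrands up to where $\PH_{t-s}$ sits, and the resulting powers of $T$ --- namely $T^{\frac{3\alpha-\gamma}{2}}$ from the commutator term and $T^{\frac{4\alpha-1-\delta}{2}}$ from the increment term --- coincide with the paper's.
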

\begin{proof}
Let us rewrite the left hand side of (\ref{vsharp}) as
\small\begin{align}
t^\frac{\beta+1}{2}\|\mathcal I(f\prec&\nabla X)(t)-f(t)\prec\mathcal I(\nabla X)(t)\|_{\alpha+\beta+1}\notag\\
& \leq t^{\frac{\beta+1}{2}}\int_0^t\big\|P_{t-s}(f(s)-f(t))\prec\nabla X_s\big\|_{\alpha+\beta+1}\text{d}s\label{irr1}\\
&+t^{\frac{\beta+1}{2}}\int_0^t \big\|P_{t-s}(f(t)\prec \nabla X_s)\text{d}s - f(t)\prec P_{t-s}\nabla X_s\big\|_{\alpha+\beta+1}\text{d}s\label{irr2}
\end{align}\normalsize
Let us consider the two summands separately. Thanks to Proposition~\ref{prop:Schauder},~\eqref{irr1} is bounded by
\small\begin{multline*}
t^{\frac{\beta+1}{2}} \int_0^t (t-s)^{-1-\frac{\beta}{2}} \|f(s)-f(t)\|_{L^\infty}\text{d}s\|X\|_\alpha\\
\leq t^{\frac{\beta+1}{2}} \int_0^t (t-s)^{-1-\frac{\beta}{2}+\frac{3\alpha-1}{2}}s^{-\frac{1+\delta-\alpha}{2}} \text{d}s \|f\|_{\star,T}\|X\|_\alpha
\leq T^\frac{4\alpha-1-\delta}{2}\|f\|_{\star,T}\|X\|_\varrho
\end{multline*}\normalsize
where we used the fact that $\beta<3\alpha-1$ and $\delta<\alpha<\varrho$. 

For~\eqref{irr2} we apply the commutator~\eqref{commSchauder} in Proposition~\ref{prop:Schauder}, so that we obtain
\small\begin{align*}
t^\frac{\beta+1}{2}\int_0^t (t-s)^{-\frac{3-3\alpha+\beta}{2}}\text{d}s\|f(t)\|_{3\alpha-1}\|X\|_\alpha&\lesssim t^\frac{\beta+1-\gamma}{2}\int_0^t (t-s)^{-\frac{3-3\alpha+\beta}{2}}\text{d}s\|f\|_2\|X\|_\alpha\\
&\leq T^\frac{3\alpha-\gamma}{2}\|f\|_2\|X\|_\varrho
\end{align*}\normalsize
the last passage being justified by the fact that $\beta<3\alpha-1$. Since $2\alpha-\frac{1}{2}<\delta$ we obtain (\ref{vsharp}). 
\end{proof}

\noindent Proposition \ref{ParaSchauder} conveys that if we take 
\begin{align*}
Q_t \eqdef \mathcal I(\nabla X)(t),\,\qquad v' \eqdef 2\nabla v + 4 \nabla X^{\<tree122>}
\end{align*}
we should be in business, i.e. we should be able to determine a fixed point map in the space of paracontrolled distributions $\D^{\alpha}_{Q,T}$. 

So far we have put all the elements in place and we have now the tools we need in order to prove that, for a given rough distribution $\XX\in\XR^{\varrho,r}$, equation (\ref{KPZmilduQ}) has a unique local in time solution. 

\begin{proposition}\label{p:KPZbounds}
Let $\alpha$, $\varrho\in(\frac{2}{5},\frac{1}{2})$ with $\alpha<\varrho$. Let $\XX\in\XR^{\varrho,r}$ and $h_0\in\CC^\alpha$. For $(v,v')\in\D^{\alpha}_{Q,T}$, let $\CG:\D^{\alpha}_{Q,T}\to C_T\CC^{\alpha+1}\times C_T\CC^\alpha_{\R^3}$ be the map defined by $\CG(v,v')=(\tilde v,\tilde v')$,  
\[
\tilde v\eqdef P_t h_0 + 4 X_t^{\<tree1222>} + 2\, \mathcal I\Big(\nabla v \cdot \nabla X\Big) (t)+R^v(t),\quad\tilde v'\eqdef 2\nabla v + 4 \nabla X^{\<tree122>}
\]
where the product term has to be understood according to Proposition~\ref{resonant} (see the expansion~\eqref{Prod}) and $R^v$ is defined by (\ref{remainder1}). Then $\CG(v,v')\in\D^{\alpha}_{Q,T}$ and there exists $\vartheta>0$ such that
\begin{equation}\label{FixedPoint}
\|\mathscr{G}(v,v')\|_{\D^{\alpha}_{Q,T}}\lesssim \Big(1+\|\XX\|_{\XR^{\varrho,r}}+\|h_0\|_\alpha\Big)^2\Big(1+T^\vartheta\|(v,v')\|_{\D^{\alpha}_{Q,T}}\Big)^2
\end{equation}
and, for $V_1=(v_1,v_1'),\,V_2=(v_2,v_2')\in\D^{\alpha}_{Q,T}$, 
\small\begin{equation}
d_{\D^{\alpha}_{Q,T}}\big(\mathscr{G}(V_1),\mathscr{G}(V_2)\big)\lesssim T^\vartheta d_{\D^{\alpha}_{Q,T}}\left(V_1,V_2\right)\big( 1+\|V_1\|_{\D^{\alpha}_{Q,T}}+\|V_2\|_{\D^{\alpha}_{Q,T}}\big)\Big(1+\|\XX\|_{\XR^{\varrho,r}}\Big)^2
\end{equation}\normalsize
\end{proposition}

\begin{proof} As already pointed out, $\mathscr{G}(v,v')$ has indeed the algebraic structure of a distribution paracontrolled by $Q$ once we set
\begin{align*}
\tilde v' &= 2\nabla v + 4 \nabla X^{\<tree122>}\\
\tilde v^\sharp&=P_t h_0^Q + \mathcal I\Big(\tilde v'\prec \nabla X\Big) (t)-\tilde v'\prec Q(t) +2\, \mathcal I\Big(\nabla v \circ \nabla X\Big)(t)+ \widetilde{R}^v(t)
\end{align*}
In order to obtain the bound (\ref{FixedPoint}), let us separately consider each term. 
Let us begin with $\|\tilde v'\|_2$. 
\begin{equation}\label{Gammaprime}
t^{\frac{\gamma}{2}}\|\tilde v'(t)\|_{3\alpha-1}\lesssim t^{\frac{\gamma}{2}}\Big(\| v(t)\|_{3\alpha}+\|X^{\<tree122>}\|_{3\alpha}\Big)\lesssim T^{\frac{\gamma-2\alpha}{2}}\|v\|_{1,x} + t^{\frac{\gamma}{2}}\|\XX\|_{\XR^{\varrho,r}}
\end{equation}
For $\|\tilde v^\sharp\|_3$, set $I_i(t)$, $i=1,\dots,4$ to be the corresponding summand in the definition of $\tilde v^\sharp$, where $I_2$ is the difference, so that
\[
t^{\frac{\beta+1}{2}}\|\tilde v^\sharp(t)\|_{\alpha+\beta+1}\lesssim \sum_{i=1}^4  t^{\frac{\beta+1}{2}}\|I_i(t)\|_{\alpha+\beta+1}
\]
Now, as a trivial consequence of Proposition~\ref{prop:Schauder} and since, by definition, $Q_0=0$ , we have
\[
t^{\frac{\beta+1}{2}}\|I_1(t)\|_{\alpha+\beta+1}=t^{\frac{\beta+1}{2}}\|P_th_0\|_{\alpha+\beta+1}\lesssim \|h_0\|_\alpha
\]
For $I_2$, Proposition \ref{ParaSchauder} tells us that
\[
t^{\frac{\beta+1}{2}}\|I_2(t)\|_{\alpha+\beta+1}\lesssim T^\frac{4\alpha-1-\delta}{2}\big(\|\tilde v'\|_{\star,T}+\|\tilde v'\|_2\big)\|\XX\|_{\XR^{\varrho,r}}
\]
It remains to prove that $\|\tilde v'\|_{\star,T}$ can be bounded in terms of $\|v\|_{\D^{\alpha}_{Q,T}}$ ((\ref{Gammaprime}) is taking care of $\|\tilde v'\|_2$). But now
\small\begin{multline*}
\|\tilde v'\|_{\star,T}=\sup_{0\leq s<t\leq T} s^{\frac{1+\delta-\alpha}{2}}\frac{\|\tilde v'(t)-\tilde v'(s)\|_{\infty}}{|t-s|^{\frac{3\alpha-1}{2}}}\lesssim \sup_{0\leq s<t\leq T} s^{\frac{1+\delta-\alpha}{2}}\frac{\|\nabla v(t)-\nabla v(s)\|_{\infty}}{|t-s|^{\frac{3\alpha-1}{2}}}\\
+s^{\frac{1+\delta-\alpha}{2}}\|\nabla X^{\<tree122>}\|_{C^{\frac{3\alpha-1}{2}}_TL^\infty}\lesssim T^\frac{\delta+1-3\alpha}{2} \|v\|_{1,T} + \|\XX\|_{\XR^{\varrho,r}}
\end{multline*}\normalsize
$I_3$ is covered by Proposition \ref{resonant}, while for $I_4$ we have 
\small\begin{align*}
t^{\frac{\beta+1}{2}}\|I_4(t)&\|_{\alpha+\beta+1}\lesssim t^{\frac{\beta+1}{2}}\|\XX\|_{\XR^\varrho} + T^\frac{1-\alpha}{2}\sup_s s^\alpha \|\nabla v(s) \succ \nabla X_s\|_{2\alpha-1}\\
&+T^\frac{\alpha+1-\gamma}{2}\sup_s s^\frac{\gamma}{2} \| \nabla X_s^{\<tree12>} \cdot \tilde v'(s)\|_{2\alpha-1}+T^\frac{2\alpha-2\gamma+1}{2}\sup_s s^\gamma \|\tilde v'(s)\|_{3\alpha-1}^2\\
&\lesssim  \Big(1+\|\XX\|_{\XR^{\varrho,r}}\Big)^2\Big( 1+T^\frac{2\alpha-2\gamma+1}{2}\|v(s)\|_{1,x}\Big)^2
\end{align*}\normalsize
where the first inequality follows by Corollary \ref{cor} while the second by Bony's estimate (Proposition \ref{prop:bony}) and (\ref{Gammaprime}). Hence, collecting the bounds obtained so far, we conclude that  $\|\tilde v^\sharp\|_3$ satisfies (\ref{FixedPoint}).  
\newline

\noindent By analogous arguments, we proceed with $\|\tilde v\|_{1,x}$.  Corollary \ref{cor} implies
\small\begin{align*}
t^\alpha\|\tilde v(t)\|_{3\alpha}\lesssim& \|h_0\|_\alpha + t^\alpha \|\XX\|_{\XR^{\varrho,r}}+T^\frac{1-2\alpha}{2}\sup_s s^\alpha\|\nabla v(s) \prec \nabla X_s\|_{\alpha-1} \\
&+ \big\|\mathcal I\Big(\nabla v \circ \nabla X\Big) (t)\big\|_{1,x} +T^\frac{1-\alpha}{2}\sup_s s^\alpha\|\nabla v(s) \succ \nabla X_s\|_{2\alpha-1}\\
&+T^\frac{\alpha+1-\gamma}{2}\sup_s s^\frac{\gamma}{2} \|\nabla X_s^{\<tree12>} \cdot \tilde v'(s)\|_{2\alpha-1}+ T^\frac{2\alpha+1-2\gamma}{2} \sup_s s^\gamma \|\tilde v'(s)\|_{3\alpha-1}^2
\end{align*}\normalsize
while Proposition \ref{resonant} takes care of the resonant term, Proposition \ref{prop:bony} and (\ref{Gammaprime}) allow us to conclude that $\|\tilde v\|_{1,x}$ satsfies (\ref{FixedPoint}) for $\vartheta= \frac{\alpha-\beta}{2}$.
\newline

Finally, let us bound the last norm. Let $0\leq s<t\leq T$. At first, notice that a straightforward application of Proposition~\ref{prop:Schauder} gives
\[
s^\frac{1+\delta-\alpha}{2} \frac{\|P_t\nabla h_0-P_s\nabla h_0 \|_{\infty}}{|t-s|^\frac{\delta}{2}} = s^\frac{1+\delta-\alpha}{2} \frac{\|\big(P_{t-s}-Id\big)P_s\nabla h_0\|_{\infty}}{|t-s|^\frac{\delta}{2}}\lesssim \|h_0\|_\alpha
\]
then, using the fact that $P_t$ and $\nabla$ commute and the second part of Corollary~\ref{cor} we have
\small\begin{align*}
s^{\frac{1+\delta-\alpha}{2}} \frac{\|\nabla \tilde v(t)-\nabla \tilde v(s)\|_{\infty}}{|t-s|^{\frac{\delta}{2}}} &\lesssim\|h_0\|_\alpha + s^\frac{1+\delta-\alpha}{2}\|\XX\|_{\mathcal{X}^\varrho}+ T^\frac{1+\delta-\alpha-\gamma}{2} \sup_s s^\gamma \|\tilde v'(s)\|_{3\alpha-1}^2\\
&+s^\frac{1+\delta-\alpha}{2} \frac{\| \mathcal I\Big(\nabla\big(\nabla v \cdot \nabla X\big)\Big) (t)-\mathcal I\Big(\nabla\big(\nabla v \cdot \nabla X\big)\Big) (s)\|_{\infty}}{|t-s|^\frac{\delta}{2}}\\
&+  T^\frac{1+\delta-\alpha-\gamma}{2}\sup_s s^\frac{\gamma}{2} \|\nabla X_s^{\<tree12>} \cdot \tilde v'(s)\|_{2\alpha-1}
\end{align*}\normalsize
Now, Proposition \ref{resonant} deals with the resonant term, while the paraproducts can be bounded by
\small\begin{multline*}
s^\frac{1+\delta-\alpha}{2} \frac{\| \mathcal I\Big(\nabla\big(\nabla v \prec\succ \nabla X\big)\Big) (t)-\mathcal I\Big(\nabla\big(\nabla v \prec\succ \nabla X\big)\Big) (s)\|_{\infty}}{|t-s|^\frac{\delta}{2}}\\
\lesssim T^\frac{1-3\alpha+\delta}{2}\sup_s s^\frac{\alpha}{2}\|\nabla v(s) \prec \nabla X_s\|_{\alpha-1} +T^\frac{1-3\alpha+\delta}{2}\sup_s s^\frac{\alpha}{2} \|\nabla v(s) \succ \nabla X_s\|_{2\alpha-1} 
\end{multline*}\normalsize
where we used the more compact notation $f\prec\succ g\eqdef f\prec g+f\succ g$. Arguing as before we conclude that (\ref{FixedPoint}) holds true. The second bound in the statement can be obtained analogously.
\end{proof}
Summarizing what achieved so far, we have the following statement.

\begin{theorem}\label{th:flow-kpz}
Let $\frac{2}{5}<\alpha<\varrho<\frac{1}{2}$, $\eta\in\CC^{\infty}$ and let  $ \CS_{cKPZ}: \CC^2\times\R\to C([0,+\infty),\CC^{2})$ be the classical flow of the equation 
\begin{equation}\label{eq:kpz-smooth}
\partial_t h(t,x)=\frac{1}{2}\Delta h(t,x)+\frac{1}{2}|\nabla h(t,x)|^2+\eta(x)-(a+b),\quad h(0,x)=0
\end{equation}
$(t,x)\in[0,+\infty[\times\mathbb T^3$. Then there exist a lower semi-continuous time $T^\star:\XR^{\varrho,r}\times\mathbb R\to(0,+\infty]$ and a unique locally Lipschitz map $\mathcal S_{rKPZ}:\XR^{\varrho,r}\to C([0,\infty[,\CC^\alpha)$ such that $\mathcal S_{rKPZ}$ extends $\mathcal S_{cKPZ}$ in the following sense 
$$
\mathcal S_{rKPZ}\left(\XX(\eta,a,b)\right)(t)=\mathcal S_{cKPZ}(\eta,a+b)(t)
$$
for all $t\leq T^\star(\XX(\eta,a,b))$ and $(\eta,a,b)\in\mathscr C^\infty\times\mathbb R^2$.
\end{theorem}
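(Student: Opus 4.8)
The plan is to upgrade the contraction estimate of the previous theorem into a genuine flow map carrying an explosion time, and then to check that on smooth data this map reproduces the classical flow. First I would fix $\X\in\XR^\varrho$ and $h_0\in\CC^\alpha$, set $Q_t:=\I(\nabla X)(t)$, and invoke the previous theorem: by \eqref{FixedPoint} and the accompanying Lipschitz bound there is a $T>0$, depending only on the norms of $\X$ appearing in \eqref{FixedPoint} and on $\|h_0\|_\alpha$, for which $\mathscr{G}$ is a strict contraction of a sufficiently large ball of $\D_T^Q$. Banach's fixed point theorem then yields a unique $(v,v')\in\D_T^Q$ with $\mathscr{G}(v,v')=(v,v')$; unwinding the definition of $\mathscr{G}$ this means that $v$ solves \eqref{KPZmilduQ} with $\nabla v\cdot\nabla X$ read according to Proposition~\ref{resonant} and that $v'=2\nabla v+4\nabla X^{\<tree122>}$, so that $h:=X+X^{\<tree12>}+2X^{\<tree122>}+v$ is a local mild solution of the rough KPZ-type equation. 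Since $\alpha<\varrho$ one gets $h\in C([0,T],\CC^\alpha(\mathbb T^3))$ and in particular $h(T)\in\CC^\alpha$.

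Next I would build the maximal solution. Because the local horizon above depends only on the size of $\X$ and of the current datum, one restarts the construction from $h(T)$, using the enhanced data that $\X$ carries on the shifted sub-interval (which, after re-basing the operator $\I$, again belongs to the relevant closure $\XR^\varrho$), and iterates; gluing the pieces produces a maximal interval $[0,T^\star)$ with $T^\star=T^\star(\X,h_0)\in(0,\infty]$ on which a unique $h=\mathcal S_{rKPZ}(\X)$ lives, together with the blow-up alternative $T^\star<\infty\Rightarrow\limsup_{t\uparrow T^\star}\|h(t)\|_\alpha=+\infty$; where $\mathcal S_{rKPZ}$ is undefined one extends it arbitrarily (e.g.\ by a constant) so that it maps into $C([0,+\infty[,\CC^\alpha(\mathbb T^3))$. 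Uniqueness in $\D_T^Q$ for every $T<T^\star$ is already part of the fixed point statement.

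Continuity of $\mathcal S_{rKPZ}$ and lower semicontinuity of $T^\star$ then both come from the second estimate of the previous theorem: on bounded subsets of the data the fixed point depends Lipschitz–continuously on $\X$ (and $h_0$) for small $T$, so, propagating this dependence through the finitely many restart steps, $(\X,h_0)\mapsto\mathcal S_{rKPZ}(\X)$ is continuous into $C([0,T],\CC^\alpha(\mathbb T^3))$ for every $T<T^\star(\X,h_0)$. In particular, if $\X_n\to\X$ in $\XR^\varrho$ and $T<T^\star(\X,h_0)$, the solutions driven by $\X_n$ exist on $[0,T]$ and stay in a fixed ball for $n$ large, hence $T^\star(\X_n,h_0)>T$; letting $T\uparrow T^\star(\X,h_0)$ gives $\liminf_n T^\star(\X_n,h_0)\ge T^\star(\X,h_0)$.

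It remains to identify the solution on smooth data. For $\eta$ smooth, $(a,b)\in\R^2$ and $\X=\Xi(\eta,a,b)$ every Littlewood--Paley product involved is classical: the paraproducts and the commutators $\mathscr R$ in \eqref{Prod} are genuine products, and the last component of $\Xi(\eta,a,b)$ is exactly the resonant part of the true product $\nabla Q\cdot\nabla X$, so the expression furnished by Proposition~\ref{resonant} coincides with the pointwise $\nabla v\cdot\nabla X$. Hence the fixed point $v$ solves the classical mild equation \eqref{KPZmilduQ}; reconstructing $h$ from $v$ and the components of $\X=\Xi(\eta,a,b)$ and differentiating in time, the two linear-in-$t$ corrections built into $\Xi$ combine into the single renormalisation constant $-(a+b)$, so that $h$ is a classical solution of \eqref{eq:kpz-smooth} with zero initial datum; by uniqueness of the classical solution, $h=\mathcal S_{cKPZ}(\eta,a+b)$, and since the classical solution is smooth its $\CC^\alpha$ norm cannot explode before its own lifespan, so the two lifespans coincide and $\mathcal S_{rKPZ}(\Xi(\eta,a,b))(t)=\mathcal S_{cKPZ}(\eta,a+b)(t)$ for all $t\le T^\star(\Xi(\eta,a,b))$. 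The analytic heart — the self-map and contraction of $\mathscr{G}$ on $\D_T^Q$ — being already settled, the main obstacle here is the bookkeeping of the restart: the time shift does not act trivially on $X,X^{\<tree12>},\dots$, since these are produced through $\I$, so one must verify that the data needed to continue the solution past each step still lies in $\XR^\varrho$, and, relatedly, one must track the renormalisation constants $a,b$ carefully through the nonlinear terms when matching with \eqref{eq:kpz-smooth}.
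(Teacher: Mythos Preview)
Your proposal is correct and follows the same approach as the paper, which in fact does not give a standalone proof of this theorem at all: the paper simply writes ``Summarizing what achieved so far, we have the following statement'' and then states the result, treating it as a direct corollary of the contraction estimates in the preceding theorem. You have supplied the standard details the paper omits --- the Banach fixed point in $\D_T^Q$, the restart/maximal solution construction, the lower semicontinuity of $T^\star$ via stability, and the identification with the classical flow on smooth data through the consistency of the paracontrolled product with the pointwise one --- and your caveat about the restart bookkeeping for the time-dependent objects $X,X^{\<tree12>},\dots$ is honest and apt, but does not indicate a gap in the argument.
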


\begin{proof}
Given the bounds in Proposition~\ref{p:KPZbounds}, the proof is completely analogous to the one of Theorem~\ref{th:Generator-r} provided in Section~\ref{sec:solv-Generator}.
\end{proof}

\begin{remark}
The fact that the equation~\eqref{eq:kpz-smooth} is globally well-posed, i.e. its solution $h$ does not explode in finite time, when $\eta$ is a smooth function is ensured by the fact that, thanks to the Cole-Hopf transform, $e^h$ is the solution of the linear equation 
$$
\partial_te^h=\frac{1}{2}\Delta e^h+e^h\eta 
$$
which is known to admit a unique global strictly positive solution when the initial condition is identically equal to $1$ (for example by Feynmann-Kac formula). 
\end{remark}
\begin{remark}\label{rem:two}
As we pointed out at the beginning of this section, we notice that in $d=2$, the space white noise belongs to $\CC^{\eta}$ for $\eta<-1$, and that an expansion of order one is sufficient in order to make sense of the equation~\eqref{eq:KPZtype}. 
Moreover in this case the map $\mathcal S_{rKPZ}$ is simply a locally Lipschitz functional of $(X,X^{\<tree12>})(\eta)$. 
\end{remark}


\subsection{Stochastic part}

Let $\xi$ be a space white noise on $\mathbb T^3$ and $\xi^\eps$ its regularization as in Theorem~\ref{th:polymer}, i.e.
\begin{equation}\label{e:Molli}
\xi^\eps(x)= \sum_{k\in\Z_0^3} m(\eps k) \hat{\xi}(k) e^{i k\cdot x}
\end{equation}
where $m$ is a smooth radial function with compact support such that $m(0)=1$ and $(\hat{\xi}(k))_k$ is a family of Gaussian random variables with covariance structure given by
\[
\E [\hat{\xi}(k_1)\hat{\xi}(k_2)]= \1_{\{k_1=-k_2\}}\,.
\]
In order to complete the study of equation~\eqref{eq:KPZtype1} we have to prove that the process $X=\I(\xi)$ can be indeed lifted to the space of rough distributions $\XR^{\varrho,r}$. To do so, we will show that, upon defining the processes $X^\eps$, $X^{\eps,\<tree12>}$, $X^{\eps,\<tree122>}$, $X^{\eps,\<tree1222>}$, $X^{\eps,\<tree124>}, \nabla Q^\eps\circ\nabla X^\eps $ according to~\eqref{def:stochterms} and Definition~\ref{def:rough distribution}, we have the following theorem.

\begin{theorem}
\label{th:stoc}
Let $\varrho<\frac{1}{2}$ and $(\Omega, \CF, \prob_\xi)$ be a probability space on which the space white noise $\xi$ is defined. Let $m$ be a smooth radial function with compact support such that $m(0)=1$ and $\xi_\eps$ be defined as in~\eqref{e:Molli}. Then, upon choosing the constants $c_\eps^{\<tree12>}$, $c_\eps^{\<tree124>}\in\R$ as
\begin{equation}\label{e:Constants}
c_\eps^{\<tree12>}=\sum_{k\ne0}\frac{|m(\eps k)|^2}{|k|^2},\quad c_\eps^{\<tree124>}=2\sum_{k_1,k_2}m(\eps k_1)^2m(\eps k_2)^2\frac{|k_1\cdot k_2|^2}{|k_{12}|^2|k_1|^4|k_2|^4}
\end{equation}
the sequence
\[
\XX^\eps_t\eqdef\left( X^\eps_t, X_t^{\eps,\<tree12>}-c_\eps^{\<tree12>}t, X_t^{\eps,\<tree122>}, X_t^{\eps,\<tree1222>}, X^{\eps,\<tree124>}_t-c_\eps^{\<tree124>}t, \nabla Q^\eps\circ\nabla X^\eps(x)\right)
\]
converges to a process $\XX=(X$, $X^{\<tree12>}$, $X^{\<tree122>}$, $X^{\<tree1222>}$, $X^{\<tree124>}, \nabla Q\circ\nabla X )\in \CH^\varrho$ in $L^p(\Omega, \CH^\varrho)$ for every $p>1$. The limiting process $\XX$ is independent of the choice of the mollifier and of the sequence of constants $c_\eps^{\<tree12>}$, $c_\eps^{\<tree124>}$. 
 
Moreover, replacing $\xi^\eps$ with $\delta \xi^\eps$ for $\delta>0$, the corresponding renormalizing constants are such that $c_{\eps,\delta}^{\<tree12>}=\delta^2c_\eps^{\<tree12>}$, $c_{\eps,\delta}^{\<tree124>}=\delta^4c_\eps^{\<tree124>}$. 
\end{theorem}

\begin{remark}\label{rem:Constants}
The choice of the constants made in~\eqref{e:Constants} is not unique. Clearly, being them diverging, adding any real number would not prevent the sequence $\XX^\eps$ from converging to $\XX$. 
What instead is unique, is their behaviour as $\eps$ goes to $0$ and it is possible to prove that they asymptotically satisfy
\[
c_\eps^{\<tree12>}\sim\eps^{-1},\qquad c_\eps^{\<tree124>}=O((\log(\eps))^2)\,.
\]
\end{remark}

\begin{proof} The proof of results of this type makes always use of the same tools (see~\cite{GIP15,CC13,GP}) and follows a, by now, standard procedure. For $\tau\in\{\cdot,\<tree12>,\<tree122>,\<tree1222>,\<tree124>\}$, at first one obtains $L^2$ bounds of the different Wiener-chaos components of $X^{\eps,\tau}_{s,t}-X^{\eps',\tau}_{s,t}$, where $X_{s,t}\eqdef X_t-X_s$, and then the conclusion is attained thanks to Besov embedding (Proposition~\ref{proposition:Bes-emb}) and Garsia-Rodemich-Rumsey lemma (see~\cite{GRR}). 

In the following paragraphs, we will prove only the $L^2$ bounds for the time increment 
as well as the evaluation of the diverging constants necessary to renormalize the KPZ-type equation presented above. 

\end{proof}

\begin{notation*}
Since we will run into long formulas, we reckon convenient to introduce some notations we will exploit in the rest of the chapter. As already pointed out, the time increment of a process $X$ will be abbreviated as $X_{s,t}\eqdef X_t-X_s$ and for a function of time $f(\cdot)$ we will write $f(s,t)\eqdef f(t)-f(s)$. For vectors $k_1,k_2\in\R^d$, we will indicate by $k_{12}\eqdef k_1+k_2$, by $k_1\cdot k_2$ their scalar product and $k_1\,k_2^\star\in\R^{d\times d}$ the matrix generated by the column by vector product.
\end{notation*}

\subsection*{Definition of $X$}

By definition
\small\begin{align*}
X_t^\eps(x)&= \I(\xi^\eps)(t,x) = \int_0^t P_{t-s} \xi^\eps (x)\dd s\\
&= \sum_{k\in\Z_0^3} F_t^\eps(k)\hat{\xi}(k) e^{i k\cdot x}=\sum_{k\in\Z_0^3}m(\eps k)\frac{1-e^{-\frac{1}{2}|k|^2t}}{|k|^2} \hat{\xi}(k) e^{i k\cdot x}
\end{align*}\normalsize
The well-posedness of this term is straightforward and has already been shown in a slightly different context, for example, in~\cite{GIP15,CC13}.

\subsection*{Definition of $X^{\<tree12>}$}

As before we have
\small\[
X_t^{\eps,\<tree12>}(x)= \I(|\nabla X^\eps|^2)(t,x) = -\sum_{k\in\Z^3} \sum_{\substack{k_1,k_2\in\Z_0^3\\k_{12}=k}} F^{\eps,\<tree12>}_t(k,k_1,k_2) (k_1\cdot k_2)\hat{\xi}(k_1)\hat{\xi}(k_2) e_k
\]\normalsize
where $F^{\eps,\<tree12>}_t(k,k_1,k_2) = \int_0^t e^{-\frac{1}{2}|k|^2(t-s)}F_s^\eps(k_1)F_s^\eps(k_2)\dd s$, for $k_1$, $k_2\in\Z_0^3$ and $t\geq 0$.

\subsubsection*{$0^{th}$-chaos}

The $0$-th chaos component of $X^{\eps,\<tree12>}$ is given by
\small\begin{align*}
c_\eps^{\<tree12>}(t)&=\E[X_t^{\eps,\<tree12>}(x)]\\
&= -\sum_{\substack{ k_1,k_2\in\Z_0^3\\k_{12}=k}} F^{\eps,\<tree12>}_t(k,k_1,k_2) (k_1\cdot k_2)\E[\hat{\xi}(k_1)\hat{\xi}(k_2)] e_{k_{12}}=\sum_{k\in\Z_0^3} F^{\eps,\<tree12>}_t(0,k,k) |k|^2
\end{align*}\normalsize
where the first equality follows by Wick's theorem. Expanding the kernel, we obtain
\small\begin{align*}
c_\eps^{\<tree12>}(t)= t\sum_{k\in\Z_0^3}\frac{m(\eps k)^2}{|k|^2}-4\sum_{k\in\Z_0^3}m(\eps k)^2\frac{1-e^{-\frac{1}{2}|k|^2 t}}{|k|^4} + \sum_{k\in\Z_0^3}m(\eps k)^2\frac{1-e^{-|k|^2 t}}{|k|^4}
\end{align*}\normalsize
and the latter two summands converge for every $t\geq0$. This means that, in order to renormalize $X_t^{\eps,\<tree12>}$ it is enough to subtract the first term $c_\eps^{\<tree12>}\,t$, where $c_\eps^{\<tree12>}=\sum_{k\in\Z_0^3}\frac{m(\eps k)^2}{|k|^2}$. 

\subsubsection*{$2^{nd}$-chaos}

Thanks again to Wick's theorem, the second moment of the second chaos component of $X^{\eps,\<tree12>}$ is
\small\begin{align*}
\E\Big|\Delta_q \Big(X_{s,t}^{\eps,\<tree12>}-c_\eps^{\<tree12>}(s,t)\Big)\Big|^2=2\sum_{k\in\Z_0^3} \varrho_q(k)^2\sum_{k_{12}=k} |F^{\eps,\<tree12>}_{s,t}(k,k_1,k_2)|^2|k_1\cdot k_2|^2
\end{align*}\normalsize
where we recall that $\varrho_q(\cdot)\eqdef\varrho(2^{-q}\cdot)$. Now, the modulus of the kernel $F^{\eps,\<tree12>}$ can be bounded by
\small\begin{align*}
\int_s^t e^{-\frac{1}{2}|k|^2(t-r)} &F_r^\eps(k_1)F_r^\eps(k_2)\dd r +|1-e^{-\frac{1}{2}|k|^2(t-s)}|\int_0^s e^{-\frac{1}{2}|k|^2(s-r)} F_r^\eps(k_1)F_r^\eps(k_2)\dd r\\
&\lesssim m(\eps k_1)m(\eps k_2)\frac{|1-e^{-\frac{1}{2}|k|^2(t-s)}|}{|k|^2|k_1|^2|k_2|^2}\lesssim m(\eps k_1)m(\eps k_2)\frac{|t-s|^\vartheta}{|k|^{2-2\vartheta}|k_1|^2|k_2|^2}
\end{align*}\normalsize
where in the last passage we used geometric interpolation for $\vartheta\in(0,1)$. Therefore
\small\begin{align*}
\E\Big|\Delta_q \Big(X_{s,t}^{\eps,\<tree12>}-c_\eps^{\<tree12>}(s,t)\Big)\Big|^2&\lesssim |t-s|^{2\vartheta} \sum_{k\in\Z_0^3} \frac{\varrho_q(k)^2}{|k|^{4-4\vartheta}}\sum_{k_{12}=k} \frac{|k_1\cdot k_2|^2}{|k_1|^4|k_2|^4}\\
&\lesssim |t-s|^{2\vartheta} 2^{-2q(2-2\vartheta)}\sum_{\substack{k_{12}=k\\|k|\sim2^q}}\frac{1}{|k_1|^2|k_2|^2}\,.
\end{align*}\normalsize
Now, the latter sum is bounded by
\small\[
\sum_{\substack{k\in\Z_0^3\\|k|\sim2^q}}\frac{1}{|k|^{1-\delta}}\sum_{k_1:|k_1|\leq|k_2|}\frac{1}{|k_1|^{3+\delta}}\lesssim2^{-2q(-\frac{3}{2}+\frac{1}{2}-\frac{\delta}{2})}\sum_{k_1\in\Z_0^3}\frac{1}{|k_1|^{3+\delta}}
\]\normalsize
and the last sum is finite. 

\subsection*{Definition of $X^{\<tree122>}$}

Analogously we proceed with the next term
\small\[
X^{\eps,\<tree122>}_t(x)= \sum_{\substack{k\in\Z^3\\k_{123}=k}} F_t^{\eps,\<tree122>} (k,k_{12},k_1,k_2,k_3)(k_{12}\cdot k_3)(k_1\cdot k_2)\hat{\xi}(k_1)\hat{\xi}(k_2)\hat{\xi}(k_3) e_k
\]\normalsize
where $k_{123}\eqdef k_1+k_2+k_3$ and $F_t^{\eps,\<tree122>} (k,k_{12},k_1,k_2,k_3)= \int_0^t e^{-\frac{1}{2}|k|^2(t-s)}F_s^{\eps,\<tree12>}(k_{12},$ $k_1,k_2)F_s^\eps(k_3)\dd s$, for $k_1$,  $k_2$,  $k_3\in\Z_0^3$ and $t\geq 0$. 

\subsubsection*{$1^{st}$ chaos}

Let $\pi_i$ be the projection onto the $i$-th Wiener chaos. Notice that $\pi_1\Big(X^{\eps,\<tree122>}\Big)(t,x)$ is given by
\small\[
\sum_{\substack{k\in\Z^3\\k_{123}=k}} F_t^{\eps,\<tree122>} (k,k_{12},k_1,k_2,k_3)(k_{12}\cdot k_3)(k_1\cdot k_2)\pi_1\Big(\hat{\xi}(k_1)\hat{\xi}(k_2)\hat{\xi}(k_3) \Big)e_k
\]\normalsize
and
\small\begin{align*}
\pi_1\Big(\hat{\xi}(k_1)\hat{\xi}(k_2)\hat{\xi}(k_3) \Big) = \hat{\xi}(k_1) \1_{\{k_2=-k_3\}}+\hat{\xi}(k_2) \1_{\{k_1=-k_3\}}+\hat{\xi}(k_3) \1_{\{k_1=-k_2\}}
\end{align*}\normalsize
Now, the first two summands give the same contribution since the role of $k_1$ and $k_2$ is completely symmetric while the last summand does not give any, since the sum depends linearly on $k_{12}$. Therefore we get
\small\[
\pi_1\Big(X^{\eps,\<tree122>}\Big)(t,x)= -2\sum_{k_1\in\Z_0^3} \sum_{k_2\in\Z_0^3}F_t^{\eps,\<tree122>} (k_1,k_{12},k_1,k_2,-k_2)(k_{12}\cdot k_2)(k_1\cdot k_2)\hat{\xi}(k_1) e_{k_1}
\]\normalsize
hence
\small\begin{align*}
\E\Big|\Delta_q \pi_1\big(X^{\eps,\<tree122>}\big)_{s,t}\Big|^2=4\sum_{k_1}\varrho_q(k_1)^2\sum_{k_2} \left(F_{s,t}^{\eps,\<tree122>} (k_1,k_{12},k_1,k_2,k_1)\right)^2|k_{12}\cdot k_2|^2|k_1\cdot k_2|^2
\end{align*}\normalsize
If we bound the kernel $F^{\eps,\<tree122>} (k_1,k_2)$ too boldly we would not obtain the hoped result, therefore we have to proceed more subtly, following the scheme exploited by Gubinelli and Perkowski in~\cite{GP}. We can always write
\small\begin{align*}
F_{s,t}^{\eps,\<tree122>} (k_1,k_{12},&k_1,k_2,k_2) = \int_s^t e^{-\frac{1}{2}|k_1|^2(t-r)}F_r^{\eps,\<tree12>}(k_{12},k_1,k_2)F_r^\eps(k_2)\dd r \\
&-\big(1-e^{-\frac{1}{2}|k_1|^2(t-s)}\big)\int_0^s e^{-\frac{1}{2}|k_1|^2(s-r)}F_r^{\eps,\<tree12>}(k_{12},k_1,k_2)F_r^\eps(k_2)\dd r
\end{align*}\normalsize
Let us focus on the first summand, the computation for the second being identical. Then we have
\small\begin{multline}\label{kernel_first_chaos}
\sum_{k_2} \int_s^t e^{-\frac{1}{2}|k_1|^2(t-r)}\int_0^r e^{-\frac{1}{2}|k_{12}|^2(r-\bar{r})}F_{\bar{r}}^\eps(k_2)F_{\bar{r}}^\eps(k_1)\dd\bar{r} F_r^\eps(k_2)\dd r (k_{12}\cdot k_2)(k_1\cdot k_2)\\
=\int_0^t\dd r\int_0^r \dd \bar{r} e^{-\frac{1}{2}|k_1|^2(t-r)} F_{\bar{r}}^\eps(k_1) k_1\cdot \Big(\sum_{k_2}e^{-\frac{1}{2}|k_{12}|^2(r-\bar{r})}F_{\bar{r}}^\eps(k_2)F_r^\eps(k_2) (k_{12}\cdot k_2)\,k_2\Big)
\end{multline}\normalsize
where $F^\eps$ was introduced in the definition of $X^\eps$ above. Now, let us have a closer look at the quantity enclosed in the parenthesis. For $k_1=0$ the sum is equal to 0 since the summand is odd in $k_2$, i.e.
\small\[
\sum_{k_2}e^{-\frac{1}{2}|k_{2}|^2(r-\bar{r})}F_{\bar{r}}^\eps(k_2)F_r^\eps(k_2) (k_{2}\cdot k_2)\,k_2=0
\]\normalsize
therefore it equals
\small\begin{align*}
&\sum_{k_2}\Big(e^{-\frac{1}{2}|k_{12}|^2(r-\bar{r})} (k_{12}\cdot k_2) -e^{-\frac{1}{2}|k_{2}|^2(r-\bar{r})} (k_{2}\cdot k_2)\Big) F_{\bar{r}}^\eps(k_2)F_r^\eps(k_2) \,k_2\\
&=\int_0^1 \dd \tau \sum_{k_2} e^{-\frac{1}{2}|\tau k_1 + k_2|^2(r-\bar{r})} \big( k_1\cdot k_2 - 2 (k_{12}\cdot k_1)(r-\bar{r})\big)F_{\bar{r}}^\eps(k_2)F_r^\eps(k_2) \,k_2
\end{align*}\normalsize
where in the last passage we simply applied Taylor formula to $G(k)=e^{-\frac{1}{2}|k+k_2|^2(r-\bar{r})} (k+ k_2)\cdot k_2$. Getting back to (\ref{kernel_first_chaos}), its modulo is bounded by
\small\begin{align}
\int_0^1\dd\tau&\sum_{k_2} \int_s^t e^{-|k_1|^2(t-r)}\int_0^r e^{-|\tau k_1 + k_2|^2(r-\bar{r})} \big| k_1\cdot k_2 - 2 (k_{12}\cdot k_1)(r-\bar{r})\times\notag\\
&\times\big|F_{\bar{r}}^\eps(k_1)F_{\bar{r}}^\eps(k_2)\dd \bar{r} F_r^\eps(k_2) \dd r|k_1\cdot k_2|\lesssim\int_0^1\dd\tau\frac{1}{|k_1|}\sum_{k_2}\frac{1}{|k_2|^3}\int_s^t e^{-|k_1|^2(t-r)}\times \notag\\
&\times\int_0^r e^{-|\tau k_1 + k_2|^2(r-\bar{r})}\big| k_1\cdot k_2 - 2 (k_{12}\cdot k_1)(r-\bar{r})\big|\dd\bar{r}\dd r\lesssim\int_0^1\dd\tau\frac{1}{|k_1|}\sum_{k_2}\frac{1}{|k_2|^3}\times\notag\\
&\times\int_s^t e^{-|k_1|^2(t-r)}|k_1|\int_0^r e^{-|\tau k_1 + k_2|^2(r-\bar{r})} \big(|k_2| + |k_{12}|)\big)\dd\bar{r}\dd r\notag\\
&\lesssim \frac{1}{|k_1|}\int_s^t e^{-|k_1|^2(t-r)}\dd r\,|k_1|\int_0^1 \sum_{k_2}\frac{1}{|k_2|^2|\tau k_1 + k_2|^2}\dd \tau \label{latter}
\end{align}\normalsize
At this point notice that the sum in the last term can be bounded in two ways: through Cauchy-Schwarz inequality one obtain directly that it is uniformly bounded in $k_1$; alternatively, one has
\small\[
\sum_{k_2}\frac{1}{|k_2|^2|\tau k_1 + k_2|^2}\lesssim \int\frac{1}{|k_2|^2|\tau k_1 + k_2|^2} \dd k_2\lesssim \frac{1}{\tau |k_1|}\int \frac{1}{|y|^2\big|\frac{ k_1}{|k_1|} + y\big|^2}\dd y
\]\normalsize
where the integral is taken over a subset of $\R^3$ in which the integrand is well defined and the last one converges thanks to Cauchy-Schwarz once more. Interpolating these two bounds we get that (\ref{latter}) is less than
\small\[
\frac{1}{|k_1|}\int_s^t e^{-|k_1|^2(t-r)}\dd r |k_1|^\delta \int_0^1 \frac{1}{\tau^{1-\delta}}\dd \tau\lesssim \frac{1}{|k_1|^{3-\delta}}\big(1- e^{-|k_1|^2(t-s)}\big)\lesssim \frac{(t-s)^\vartheta}{|k_1|^{3-\delta-2\vartheta}}
\]\normalsize
Therefore we can conclude that
\small\[
\E\Big|\Delta_q \pi_1\big(X^{\eps,\<tree122>}\big)_{s,t}\Big|^2\lesssim (t-s)^{2\vartheta}\sum_{k_1} \frac{\varrho_q(k_1)}{|k_1|^{6-2\delta-4\vartheta}}\lesssim (t-s)^{2\vartheta} 2^{-2q\big(\frac{3}{2} - \delta-2\vartheta\big)} 
\]\normalsize

\subsubsection*{$3^{rd}$-chaos}

By Wiener chaos decomposition and Cauchy-Schwarz inequality we have
\small\begin{align*}
\E\Big|\Delta_q\Big(X_{s,t}^{\eps,\<tree122>} -\pi_1\big(X^{\eps,\<tree122>}\big)_{s,t}\Big)\Big|^2
\lesssim\sum_{\substack{k\in\Z^3\\k_{123}=k}} \varrho_q(k)^2 F_{s,t}^{\eps,\<tree122>} (k,k_{12},k_1,k_2,k_3)^2|k_{12}\cdot k_3|^2|k_1\cdot k_2|^2
\end{align*}\normalsize
Proceeding as in the previous cases, the kernel $F^{\eps,\<tree122>}$ can be bounded as follows
\small\begin{align*}
|F_{s,t}^{\eps,\<tree122>}(k,k_{12},k_1,k_2,k_3)|\lesssim \frac{(t-s)^{\vartheta}}{|k|^{2-2\vartheta}|k_{12}|^2 |k_1|^2|k_2|^2|k_3|^2}
\end{align*}\normalsize
Therefore
\small\[
\E\Big|\Delta_q\Big(X_{s,t}^{\eps,\<tree122>} -\pi_1\big(X^{\eps,\<tree122>}\big)_{s,t}\Big)\Big|^2
\lesssim (t-s)^{2\vartheta} 2^{-2q(2-2\vartheta)}\sum_{\substack{k_{123}=k\\|k|\sim2^q}} \frac{1}{|k_{12}|^2 |k_1|^2|k_2|^2|k_3|^2}
\]\normalsize
Let us focus on the latter sum
\small\begin{align*}
&\sum_{\substack{k_{123}=k\\|k|\sim2^q}} \frac{1}{|k_{12}|^2 |k_1|^2|k_2|^2|k_3|^2}=\sum_{\substack{k_{12}+k_3=k\\|k|\sim2^q}} \frac{1}{|k_{12}|^2 |k_3|^2}\sum_{k_1+k_2=k_{12}}\frac{1}{|k_1|^2|k_2|^2} \\
&=2 \sum_{\substack{k_{12}+k_3=k\\|k|\sim2^q}} \frac{1}{|k_{12}|^2 |k_3|^2}\sum_{\substack{k_1:|k_1|\leq|k_2|\\k_1+k_2=k_{12}}}\frac{1}{|k_1|^2|k_2|^2}\lesssim \sum_{\substack{k_{12}+k_3=k\\|k|\sim2^q}} \frac{1}{|k_{12}|^{3-\delta} |k_3|^2}\sum_{k_1}\frac{1}{|k_1|^{3+\delta}}\\
&\lesssim \sum_{k:|k|\sim 2^q}\Big(\sum_{\substack{k_3:|k_3|\leq|k_{12}|\\k_{123}=k}} + \sum_{\substack{k_3:|k_3|\geq|k_{12}|\\k_{123}=k}}\Big)\frac{1}{|k_{12}|^{3-\delta} |k_3|^2}\\
&\lesssim \sum_{k:|k|\sim2^q}\frac{1}{|k|^{2-2\delta}}\sum_{\bar{k}}\frac{1}{|\bar{k}|^{3+\delta}}\lesssim 2^{q(1+2\delta)}
\end{align*}\normalsize
And we conclude
\small\[
\E\Big|\Delta_q\Big(X_{s,t}^{\eps,\<tree122>} -\pi_1\big(X^{\eps,\<tree122>}\big)_{s,t}\Big)\Big|^2\lesssim(t-s)^{2\vartheta} 2^{-2q(2-\frac{1}{2}-\delta-2\vartheta)}=(t-s)^{2\vartheta} 2^{-2q\big(\frac{3}{2}-\delta-2\vartheta\big)}
\]\normalsize

\subsection*{Definition of $X^{\<tree1222>}$}

By definition, the term $X^{\eps,\<tree1222>}$ is given by
\small\begin{align*}
X_t^{\eps,\<tree1222>}(x)=-\sum_{\substack{k\in\Z\\k_{1234}=k}} &F_t^{\eps,\<tree1222>}(k,k_{123},k_{12},k_1,k_2,k_3,k_4)\times\\
&\times (k_{123}\cdot k_4)(k_{12}\cdot k_3)(k_1\cdot k_2)\hat{\xi}(k_1)\hat{\xi}(k_2)\hat{\xi}(k_3)\hat{\xi}(k_4) e_k
\end{align*}\normalsize
where, as before, $k_{1234}\eqdef k_1+k_2+k_3+k_4$, $F_t^{\eps,\<tree1222>}(k,k_{123},k_{12},k_1,k_2,k_3,k_4) = \int_0^t e^{-|k|^2(t-s)}F_s^{\eps,\<tree122>}(k_{123},k_{12},k_1,k_2,k_3)F_s^\eps(k_3)\dd s$, for $k_1$, $k_2$, $k_3$, $k_4\in\Z_0^3$ and $t\geq 0$, which in turn admits the following bound
\small\begin{equation}\label{kernel2}
|F_{s,t}^{\eps,\<tree1222>}(k,k_{123},k_{12},k_1,k_2,k_3,k_4)|\lesssim \frac{(t-s)^\vartheta}{|k|^{2-2\vartheta}|k_{123}|^2|k_{123}|^2|k_{12}|^2|k_1|^2|k_2|^2|k_3|^2|k_4|^2}
\end{equation}\normalsize

\subsubsection*{$0^{th}$-chaos}

By Wick's theorem we know that the expectation of the product of $\hat{\xi}(k_1)\,,\hat{\xi}(k_2)\,,\hat{\xi}(k_3)$ and $\hat{\xi}(k_4)$ is given by
\small\[
\1_{\{k_1=-k_2\}}\1_{\{k_3=-k_4\}}+\1_{\{k_1=-k_3\}}\1_{\{k_2=-k_4\}}+\1_{\{k_1=-k_4\}} \1_{\{k_2=-k_3\}}
\]\normalsize

\noindent therefore we have
\small\[
c_\eps^{\<tree1222>}(t) = \E\Big[ X_t^{\eps,\<tree1222>}(x)\Big]=-2\sum_{k_1,k_2\in\Z_0^3} F_t^{\eps,\<tree1222>}(0,k_1,k_{12},k_1,k_2,k_2,k_1) |k_1|^2(k_{12}\cdot k_2)(k_1\cdot k_2)
\]\normalsize
indeed, the first summand does not give any contribution because of the linear dependence on $k_{12}$, while the other two are the same since the role of $k_1$ and $k_2$ is perfectly symmetric. 
It is not clear yet that this term converges but we prefer to postpone the proof of this fact to Section~\ref{sec:const}

\subsubsection*{$2^{nd}$-chaos}

Since the second chaos component of $\hat{\xi}(k_1)\hat{\xi}(k_2)\hat{\xi}(k_3)\hat{\xi}(k_4)$ is
\small\begin{align*}
\1_{\{k_1=-k_2\}}& \hat{\xi}(k_3)\hat{\xi}(k_4)+\1_{\{k_1=-k_3\}} \hat{\xi}(k_2)\hat{\xi}(k_4)+\1_{\{k_2=-k_3\}} \hat{\xi}(k_1)\hat{\xi}(k_4)\\
&+\1_{\{k_1=-k_4\}} \hat{\xi}(k_2)\hat{\xi}(k_3)+\1_{\{k_2=-k_4\}} \hat{\xi}(k_1)\hat{\xi}(k_3)+\1_{\{k_3=-k_4\}} \hat{\xi}(k_1)\hat{\xi}(k_2)
\end{align*}\normalsize
we can decompose $\pi_2\Big(X^{\eps,\<tree1222>}\Big)(t,x)$ as the sum of three terms, $\pi_2^1(t,x)$, $\pi_2^2(t,x)$ and $\pi_2^3(t,x)$,
where the first is the term coming from the second and third summand, the second the one coming from the fourth and fifth and the first from the last. The reason why different summands give the same contribution (or no contribution at all) is the one we spelled out before, i.e. the symmetric role of $k_1$ and $k_2$ and the linear dependence of the sum on $k_{12}$. Let us separately consider each of the $\pi_2^i$, $i=1,2,3$. $\pi_2^1(t,x)$ equals
\small\[
2\sum_{k_1,k_2,k_4\in\Z_0^3} F_t^{\eps,\<tree1222>}(k_{24},k_2,k_{12},k_1,k_2,k_1,k_4) (k_2\cdot k_4)(k_{12}\cdot k_1)(k_1\cdot k_2)\hat{\xi}(k_2)\hat{\xi}(k_4) e^{i k_{24}\cdot x}
\]\normalsize
Then, using (\ref{kernel2}) and Cauchy-Schwarz inequality, we obtain
\small\[
\E|\Delta_q \pi_2^1(s,t)|^2\lesssim (t-s)^{2\vartheta}\sum_{k_2+k_4=k_{24}}\varrho_q(k_{24})^2\frac{1}{|k_{24}|^{4-4\vartheta}|k_2|^4 |k_4|^2} \sum_{k_1\in\Z_0^3}\frac{1}{|k_1|^4 |k_{12}|^2}
\]\normalsize
Now, the sum over $k_1$ is finite and can be bounded uniformly over $k_2$ using Cauchy-Schwarz inequality. Hence,
\small\begin{align*}
\E|\Delta_q \pi_2^1|^2&\lesssim (t-s)^{2\vartheta}\sum_{k_2+k_4=k_{24}}\varrho_q(k_{24})^2\frac{1}{|k_{24}|^{4-4\vartheta}|k_2|^4 |k_4|^2}\\
&\lesssim (t-s)^{2\vartheta} 2^{-2q(2-2\vartheta)}\sum_{|k_{24}|\sim 2^q}\sum_{k_2+k_4=k_{24}}\frac{1}{|k_2|^4 |k_4|^2}
\end{align*}\normalsize
and for the latter sum we have
\small\begin{align*}
\sum_{|k_{24}|\sim 2^q} &\left(\sum_{\substack{k_2: |k_2|\leq|k_4|\\k_2+k_4=k_{24}}} + \sum_{\substack{k_2: |k_4|\leq|k_2|\\k_2+k_4=k_{24}}}\right)\frac{1}{|k_2|^4 |k_4|^2}\\
&\lesssim \sum_{|k_{24}|\sim 2^q} \frac{1}{|k_{24}|^2}\sum_{k_2}\frac{1}{|k_2|^4}+\sum_{|k_{24}|\sim 2^q} \frac{1}{|k_{24}|^{3-\delta}}\sum_{k_2}\frac{1}{|k_2|^{3+\delta}}\lesssim2^{\frac{q}{2}}
\end{align*}\normalsize
and this gives the bound we expected. Let us proceed with $\pi_2^2(t,x)$ which is
\small\[
2\sum_{k_1,k_2,k_3\in\Z_0^3} F_t^{\eps,\<tree1222>}(k_{23},k_{123},k_{12},k_1,k_2,k_3,k_1) (k_{123}\cdot k_1)(k_{12}\cdot k_3)(k_1\cdot k_2)\hat{\xi}(k_2)\hat{\xi}(k_3) e^{i k_{23}\cdot x}
\]\normalsize
As before,
\small\begin{align*}
\E|\Delta_q \pi_2^2(s,t)|^2&\lesssim  (t-s)^{2\vartheta}\sum_{k_2+k_3=k_{23}}\varrho_q(k_{23})^2\frac{1}{|k_{23}|^{4-4\vartheta} |k_2|^2 |k_3|^2}\sum_{k_1}\frac{1}{|k_{123}|^2|k_{12}|^2|{k_1}|^4}\\
&\lesssim (t-s)^{2\vartheta} 2^{-2q(2-2\vartheta)}\sum_{\substack{k_2+k_3=k_{23}\\|k_{23}|\sim 2^q}}\frac{1}{ |k_2|^2 |k_3|^2}\sum_{k_1}\frac{1}{|k_{123}|^2|k_{12}|^2|{k_1}|^4}
\end{align*}\normalsize
Now, notice that for the inner sum we have
\small\begin{multline*}
\left(\sum_{\substack{k_1:|k_1|\leq|k_{12}|\\k_{12}-k_1=k_2}} + \sum_{\substack{k_1:|k_{12}|\leq|k_1|\\k_{12}-k_1=k_2}}\right)\frac{1}{|k_{123}|^2|k_{12}|^2|{k_1}|^4}\\
\lesssim \frac{1}{|k_2|^2}\sum_{k_1}\frac{1}{|k_{123}|^2|{k_1}|^4} + \frac{1}{|k_2|^4}\sum_{k_1}\frac{1}{|k_{123}|^2|k_{12}|^2}\lesssim \frac{1}{|k_2|^2}
\end{multline*}\normalsize
since both the sums are bounded uniformly over $k_2$ and $k_3$. Consequently, for the outer sum
\small\begin{multline*}
\sum_{|k_{23}|\sim 2^q}\Big(\sum_{\substack{|k_2|\leq|k_3|\\k_2+k_3=k_{23}}}+\sum_{\substack{|k_3|\leq|k_2|\\k_2+k_3=k_{23}}}\Big)\frac{1}{ |k_2|^4 |k_3|^2}\\
\lesssim\sum_{|k_{23}|\sim 2^q}\frac{1}{|k_{23}|^2}\sum_{k_2}\frac{1}{|k_2|^4} +\sum_{|k_{23}|\sim 2^q}\frac{1}{|k_{23}|^{3-\delta}}\sum_{k_2}\frac{1}{|k_2|^{3+\delta}} \lesssim 2^q
\end{multline*}\normalsize
Finally, $\pi_2^3(t,x)$ is given by
\small\[
\sum_{k_1,k_2,k_3\in\Z_0^3} F_t^{\eps,\<tree1222>}(k_{12},k_{123},k_{12},k_1,k_2,k_3,k_3) (k_{123}\cdot k_3)(k_{12}\cdot k_3)(k_1\cdot k_2)\hat{\xi}(k_1)\hat{\xi}(k_2) e^{i k_{12}\cdot x}
\]\normalsize
and $\E|\Delta_q \pi_2^3(s,t)|^2$ is bounded by
\small\begin{align*}
&(t-s)^{2\vartheta}\sum_{k_1+k_2=k_{12}}\varrho_q(k_{12})^2\frac{1}{|k_{12}|^{6-4\vartheta}|k_1|^2|k_2|^2}\sum_{k_3}\frac{1}{|k_{123}|^2|k_3|^4}\\
&\lesssim (t-s)^{2\vartheta} 2^{-2q(3-2\vartheta)}\sum_{\substack{k_1+k_2=k_{12}\\|k_{12}|\sim2^q}}\frac{1}{|k_1|^2|k_2|^2}\lesssim (t-s)^{2\vartheta} 2^{-2q(3-\frac{3}{2}-2\vartheta)}\\
&\lesssim (t-s)^{2\vartheta} 2^{-2q(\frac{3}{2}-2\vartheta)}
\end{align*}\normalsize
and this concludes the analysis of the $2^{nd}$-chaos component of $X^{\<tree1222>}$.

\subsubsection*{$4^{th}$-chaos}

As for the $3^{rd}$-chaos component of $X^{\<tree122>}$, using the Wiener chaos decomposition of $X^{\<tree1222>}$ and Cauchy-Schwarz inequality we have
\small\begin{align*}
\E\Big| \Delta_q \Big( X_{s,t}^{\eps,\<tree1222>}-&\pi_2\big(X^{\eps,\<tree1222>}\big)_{s,t} - c_\eps^{\<tree1222>}(s,t)\Big)\Big|^2\\
&\lesssim (t-s)^{2\vartheta}\sum_{\substack{k\in\Z_0^3\\k_{1234}=k}} \frac{\varrho_q(k)^2}{|k|^{4-4\vartheta}|k_{123}|^2|k_{12}|^2|k_1|^2|k_2|^2|k_3|^2|k_4|^2}\\
&\lesssim (t-s)^{2\vartheta}2^{-2q(2-2\vartheta)}\sum_{\substack{k_{1234}=k\\|k|\sim2^q}} \frac{1}{|k_{123}|^2|k_{12}|^2|k_1|^2|k_2|^2|k_3|^2|k_4|^2}
\end{align*}\normalsize
The latter sum is bounded by

\small\begin{align*}
\sum_{\substack{k_{1234}=k\\|k|\sim2^q}}&\frac{1}{|k_{123}|^2|k_{12}|^2|k_3|^2|k_4|^2}\sum_{\substack{|k_1|\leq|k_2|\\k_1+k_2=k_{12}}}\frac{1}{|k_1|^2|k_2|^2}\\
&\lesssim\sum_{\substack{k_{1234}=k\\|k|\sim2^q}}\frac{1}{|k_{123}|^2|k_{12}|^{3-\delta}|k_3|^2|k_4|^2}\sum_{k_1}\frac{1}{|k_1|^{3+\delta}}\\
&\lesssim \sum_{\substack{k_{1234}=k\\|k|\sim2^q}}\frac{1}{|k_{123}|^2|k_4|^2}\Big(\sum_{\substack{|k_{12}|\leq|k_3|\\k_{12}+k_3=k_{123}}}+\sum_{\substack{|k_{12}|\geq|k_3|\\k_{12}+k_3=k_{123}}}\Big)\frac{1}{|k_{12}|^{3-\delta}|k_3|^2}\\
&\lesssim \sum_{\substack{k_{1234}=k\\|k|\sim2^q}}\frac{1}{|k_{123}|^{4-2\delta}|k_4|^2}\sum_{k_{12}}\frac{1}{|k_{12}|^{2\delta}}\\
&\lesssim\sum_{|k|\sim2^q}\Big(\sum_{\substack{|k_{123}|\leq|k_4|\\k_{123}+k_4=k_{1234}}}+\sum_{\substack{|k_{123}|\geq|k_4|\\k_{123}+k_4=k_{1234}}}\Big)\frac{1}{|k_{123}|^{4-2\delta}|k_4|^2}\lesssim 2^q
\end{align*}\normalsize
Therefore
\small\[
\E\Big| \Delta_q \Big( X_{s,t}^{\eps,\<tree1222>}-\pi_2\big(X^{\eps,\<tree1222>}\big)_{s,t} - c_\eps^{\<tree1222>}(s,t)\Big)\Big|^2\lesssim (t-s)^{2\vartheta}2^{-2q(\frac{3}{2}-2\vartheta)}
\]\normalsize

\subsection*{Definition of $X^{\<tree124>}$}

By definition
\small\begin{align*}
X_t^{\eps,\<tree124>}(x) =-\sum_{\substack{k\in\Z\\k_{1234}=k}}& F_t^{\eps,\<tree124>}(k,k_{12},k_{34},k_1,k_2,k_3,k_4)\times\\
&\times (k_{12}\cdot k_{34})(k_1\cdot k_2)(k_3\cdot k_4)\hat{\xi}(k_1)\hat{\xi}(k_2)\hat{\xi}(k_3)\hat{\xi}(k_4) e_k
\end{align*}\normalsize
where $F_t^{\eps,\<tree124>}(k,k_{12},k_{34},k_1,k_2,k_3,k_4)=\int_0^t e^{-|k|^2(t-s)} F_s^{\eps,\<tree12>}(k_{12},k_1,k_2)F_s^{\eps,\<tree12>}(k_{34},$ $k_3,k_4)\dd s$ for $k_1$, $k_2$, $k_3$, $k_4\in\Z_0^3$ and $t\geq 0$, which in turn admits the following bound
\small\[
|F_{s,t}^{\eps,\<tree124>}(k,k_{12},k_{34},k_1,k_2,k_3,k_4)|\lesssim \frac{(t-s)^\vartheta}{|k|^{2-2\vartheta}|k_{12}|^2|k_{34}|^2|k_1|^2|k_2|^2|k_3|^2|k_4|^2}
\]\normalsize

\subsubsection*{$0^{th}$-chaos}

Applying Wick's theorem as we did for $X^{\<tree1222>}$ and taking into account the symmetry of the sum in both $k_1,k_2$ and $k_3,k_4$, we obtain
\small\[
c_\eps^{\<tree124>}(t)=2\sum_{k_1,k_2\in\Z_0^3}F_t^{\eps,\<tree124>}(0,k_{12},k_{12},k_1,k_2,k_1,k_2) |k_{12}|^2|k_1\cdot k_2|^2
\]\normalsize
This corresponds to the divergent part of our term and we postpone its analysis to  Section~\ref{sec:const}.

\subsubsection*{$2^{nd}$-chaos}

The second chaos component $\pi_2\Big(X^{\eps,\<tree124>}\Big)(t,x)$ is 
\small\[
4\sum_{k_1,k_2,k_3} F_t^{\eps,\<tree124>}(k_{23},k_{12},k_{3(-1)},k_1,k_2,k_3,k_1) (k_{12}\cdot k_{3(-1)})(k_1\cdot k_2)(k_3\cdot k_1)\hat{\xi}(k_2)\hat{\xi}(k_3)e^{i k_{23}\cdot x}
\]\normalsize
hence
\small\begin{multline*}
\E\Big[\Delta_q \Big(\pi_2\big(X^{\<tree124>}\big)_{s,t}^2\Big)\Big]\\
\lesssim (t-s)^{2\vartheta}\sum_{k_2+k_3=k_{23}}\varrho_q(k_{23})^2\frac{1}{|k_{23}|^{4-4\vartheta}|k_2|^2|k_3|^2}\sum_{k_1\in\Z_0^3}\frac{1}{|k_{12}|^2|k_{3(-1)}|^2|k_1|^4}
\end{multline*}\normalsize
and the sum is bounded by
\small\begin{align*}
\sum_{|k_{23}|\sim2^q}&\frac{1}{|k_2|^2|k_3|^2}\Big(\sum_{|k_1|\leq|k_{12}|}+\sum_{|k_{12}|\leq|k_1|}\Big)\frac{1}{|k_{12}|^2|k_{3(-1)}|^2|k_1|^4}\\
&\lesssim \sum_{|k_{23}|\sim2^q}\frac{1}{|k_2|^3|k_3|^2}\Big(\sum_{|k_1|\leq|k_{3(-1)}|}+\sum_{|k_{3(-1)}|\leq|k_1|}\Big)\frac{1}{|k_{3(-1)}|^2|k_1|^5}\\
&\lesssim\sum_{|k_{23}|\sim2^q}\frac{1}{|k_2|^3|k_3|^3}\sum_{k_1}\frac{1}{|k_1|^6}\lesssim \sum_{|k_{23}|\sim2^q}\frac{1}{|k_{23}|^{3-\delta}}\sum_{k_2}\frac{1}{|k_2|^{3+\delta}}\lesssim 2^{\frac{q\delta}{2}}
\end{align*}\normalsize

\subsubsection*{$4^{th}$-chaos}

As for $X^{\<tree1222>}$ we have
\small\begin{align*}
\E\Big| \Delta_q \Big( &X_{s,t}^{\eps,\<tree124>}-\pi_2\big(X^{\eps,\<tree124>}\big)_{s,t} - c_\eps^{\<tree124>}(s,t)\Big)\Big|^2\\
&\lesssim (t-s)^{2\vartheta} \sum_{\substack{k\in\Z_0^3\\k_{1234}=k}} \frac{\varrho_q(k)^2}{|k|^{4-4\vartheta}|k_{12}|^2|k_{34}|^2|k_1|^2|k_2|^2|k_3|^2|k_4|^2}\\
&\lesssim  (t-s)^{2\vartheta} 2^{-2q(2-2\vartheta)}\sum_{\substack{|k|\sim2^q\\k_{1234}=k}}\frac{1}{|k_{12}|^2|k_{34}|^2}\sum_{k_1+k_2=k_{12}}\frac{1}{|k_1|^2|k_2|^2}\sum_{k_3+k_4=k_{34}}\frac{1}{|k_3|^2|k_4|^2}
\end{align*}\normalsize
Proceeding as before the latter sum is bounded by
\small\begin{multline*}
\sum_{\substack{|k|\sim2^q\\k_{1234}=k}}\frac{1}{|k_{12}|^{3-\delta}|k_{34}|^{3-\delta}}\\
\lesssim \sum_{|k|\sim 2^q}\sum_{\substack{k_{12}:|k_{12}|\leq |k_{34}|\\k_{1234}=k}}\frac{1}{|k_{12}|^{3-\delta}|k_{34}|^{3-\delta}}\lesssim \sum_{|k|\sim 2^q}\frac{1}{|k|^{3-2\delta}}\sum_{k_{12}}\frac{1}{|k_{12}|^{3+\delta}}\lesssim 2^{2q\delta}
\end{multline*}\normalsize

\subsection*{Definition of $\nabla Q\circ\nabla X$}

Recall the definition of $Q^\eps$
\small\[
Q_t(x)=\I\big(\nabla X^\eps\big)(t,x)=i\sum_{k\in\Z_0^3} F_t^{\eps,Q}(k) \,k\,\hat{\xi}(k) e_k
\]\normalsize
where $F_t^{\eps,Q}(k)=\int_0^t e^{-\frac{1}{2}|k|^2(t-s)}F_s^\eps(k)\dd s$, then $\nabla Q^\eps\circ\nabla X^\eps(t,x)$ is
\small\[
-i \sum_{\substack{k\in\Z_0^3\\k_{12}=k\\|i-j|\leq1}}\varrho_i(k_1)\varrho_j(k_2)F_t^{\eps,Q}(k_1) F_t^\eps(k_2) (k_1k_1^\star \cdot k_2 )\hat{\xi}(k_1)\hat{\xi}(k_2) e_k\,.
\]\normalsize
Notice that
\small\begin{multline*}
\E\Big[\nabla Q^\eps\circ\nabla X^\eps(t,x)\Big]=i \sum_{|i-j|\leq1}\sum_{k\in\Z_0^3} \varrho_i(k)\varrho_j(k)F_t^{\eps,Q}(k) F_t^\eps(k)(kk^\star \cdot k )=0
\end{multline*}\normalsize
where the last equality follows by the fact that the argument of the previous sum is odd. 
\small\begin{multline*}
\E\Big| \Delta_q \Big(\nabla Q^\eps\circ\nabla X^\eps\Big)_{s,t}\Big|^2\\
\lesssim \sum_{k\in\Z_0^3} \varrho_q(k)^2 \sum_{\substack{k_{12}=k\\|i-j|\leq1}}\varrho_i(k_1)^2\varrho_j(k_2)^2F_{s,t}^{\eps,Q}(k_1)^2 F_{s,t}^\eps(k_2)^2 |k_1k_1^\star \cdot k_2|^2
\end{multline*}\normalsize
Since, $|F_{s,t}^{\eps,Q}(k)|\lesssim \frac{(t-s)^\vartheta}{|k|^{4-2\vartheta}}$ we have
\small\[
\E\Big| \Delta_q \Big(\nabla Q^\eps\circ\nabla X^\eps\Big)_{s,t}\Big|^2 \lesssim (t-s)^{2\vartheta}\sum_{k\in\Z_0^3} \varrho_q(k)^2 \sum_{\substack{k_{12}=k\\|i-j|\leq1}}\varrho_i(k_1)^2\varrho_j(k_2)^2 \frac{1}{|k_1|^{4-4\vartheta}|k_2|^2}
\]\normalsize
and the sum is bounded by
\small\begin{align*}
&\sum_{\substack{q\lesssim j\\|i-j|\leq1}}\sum_{|k|\sim2^q}\Big(\sum_{\substack{k_1:|k_1|\leq|k_2|\\k_{12}=k}}+\sum_{\substack{k_1:|k_2|\leq|k_1|\\k_{12}=k}}\Big)\varrho_i(k_1)^2\varrho_j(k_2)^2 \frac{1}{|k_1|^{4-4\vartheta}|k_2|^2} \\
&\lesssim\sum_{\substack{q\lesssim j\\|i-j|\leq1}}\sum_{|k|\sim2^q}\Big(2^{-2j} 2^{-i(1-4\vartheta -\delta)}\sum_{k_1}\frac{1}{|k_1|^{3+\delta}}+2^{-i(3-4\vartheta-\delta)}\sum_{k_2}\frac{1}{|k_2|^{3+\delta}}\Big)\\
&\lesssim2^{3q}\sum_{q\lesssim j} 2^{-j(3-4\vartheta-\delta)}\lesssim2^{-2q(-2\vartheta-\frac{\delta}{2})}
\end{align*}\normalsize
and this concludes the proof. 

\subsection{The Renormalization Constants}
\label{sec:const}

As in the case of $X^{\<tree12>}$, in order to ensure the convergence of $X_t^{\eps,\<tree1222>}$ and $X_t^{\eps,\<tree124>}$ to well-defined stochastic processes, it is necessary to carefully study their $0$-th chaos component. Indeed, since in principle these are deterministic functions of time, it might happen that they diverge and in that case they need to be subtracted in order to obtain some sensible limit. They are respectively given by 
\[
c_\eps^{\<tree1222>}(t)=\mathbb E\left[X_t^{\eps,\<tree1222>}\right]\quad\text{and}\quad c_\eps^{\<tree124>}(t)= \mathbb E\left[X_t^{\eps,\<tree124>}\right]\,.
\]
Let us begin by analyzing $c_\eps^{\<tree1222>}(t)$. By Wick theorem, $\mathbb E[\nabla X^\eps(t)\nabla X^{\eps,\<tree122>}(t)]$ is
\small\begin{multline*}
-2\sum_{k_1,k_2\ne0;k_1\ne-k_2}(k_2\cdot k_{12})(k_1\cdot k_2)(1-e^{-\frac{1}{2}|k_1|^2t})|m(\eps k_1)|\\
\times\int_0^t\dd s\int_0^s\dd\sigma e^{-\frac{1}{2}|k_1|^2(t-s)}e^{-\frac{1}{2}|k_{12}|^2(s-\sigma)}F_s^\eps(k_2)F_\sigma^\eps(k_1)F_\sigma^\eps(k_2)
\end{multline*}\normalsize
Now is not difficult to see that, upon pulling out of the previous integrals all the quantities not depending on $\sigma$ or $s$,  the latter is less than
\small\begin{multline*}
\int_0^t\int_0^s\dd s\dd\sigma e^{-\frac{1}{2}|k_1|^2(t-s)}e^{-\frac{1}{2}|k_{12}|^2(s-\sigma)}(1-e^{-\frac{1}{2}|k_1|^2\sigma})\times\\
\times(e^{-\frac{1}{2}|k_2|^2s}+e^{-\frac{1}{2}|k_2|\sigma}+e^{-\frac{1}{2}|k_2|^2(s+\sigma)})
\lesssim\frac{t^{-1+\frac{3}{2}\nu}}{|k_{12}|^{2-\nu}|k_2|^{2-\nu}|k_1|^{2-\nu}} 
\end{multline*}\normalsize
for $\nu>0$ small enough. Then, the previous sum can be bounded by 
\small\begin{multline*}
t^{3/2\nu-1}\sum_{k_1,k_2}\frac{1}{|k_1|^{3-\nu}|k_2|^{4-\nu}|k_{12}|^{1-\nu}}\\
\lesssim t^{3/2\nu-1}(\sum_{k_1,k_2}|k_1|^{3+\nu}|k_{12}|^{5-3\nu}+\sum_{k_1,k_2}|k_2|^{4-\nu}|k_{12}|^{4-2\nu})<+\infty
\end{multline*}\normalsize
for $\nu>0$ small enough. Then, to show convergence of $\mathbb E[\nabla X(t)\nabla X^{\<tree122>}(t)]$, it remains to study the contribution given by the two following integrals
\small\begin{align*}
&\int_0^t\dd s\int_0^s\dd\sigma  e^{-\frac{1}{2}|k_1|^2(t-s)}e^{-\frac{1}{2}|k_{12}|^2(s-\sigma)}\,,\\
&\int_0^t\dd s\int_0^s\dd\sigma  e^{-\frac{1}{2}|k_1|^2(t-s)}e^{-\frac{1}{2}|k_{12}|^2(s-\sigma)}e^{-\frac{1}{2}|k_1|^2\sigma}.
\end{align*}\normalsize
A direct computation gives
\small\begin{multline}\label{eq:int}
\int_0^t\dd s\int_0^s\dd\sigma  e^{-\frac{1}{2}|k_1|^2(t-s)}e^{-\frac{1}{2}|k_{12}|^2(s-\sigma)}\\
=2\frac{1-e^{-\frac{1}{2}|k_1|^2t}}{|k_{12}|^2|k_1|^2}-\frac{2}{|k_{12}|^2}\int_0^t\dd s e^{-\frac{1}{2}|k_1|^2(t-s)}e^{-\frac{1}{2}|k_{12}|^2s}
\end{multline}\normalsize
and we observe
\small\begin{multline*}
\sum_{k_1,k_2}\frac{|(k_2\cdot k_{12})(k_1\cdot k_2)|}{|k_1|^2|k_2|^4|k_{12}|^2}(1-e^{-\frac{1}{2}|k_1|^2t})\int_0^t\dd s e^{-\frac{1}{2}|k_1|^2(t-s)}e^{-\frac{1}{2}|k_{12}|^2s}\\
\lesssim\sum_{k_1,k_2}\frac{1}{t^{1-\nu}|k_1|^{3-\nu}|k_2|^2|k_{12}|^{3-\nu}}<+\infty
\end{multline*}\normalsize
where we have bounded the integral term by $t^{\nu-1}|k_1|^{\nu-1}|k_{12}|^{\nu-1}$ for $\nu>0$ small enough, and the convergence of the sum appearing at the right hand side is obtained as before. 
Now, let us focus on the contribution to the sum given by the first summand at the right hand side of~\eqref{eq:int}, i.e. 
\small$$
\sum_{k_1,k_2\ne0;k_1\ne-k_2}\frac{(k_2\cdot k_{12})(k_1\cdot k_2)}{|k_1|^4|k_2|^4|k_{12}|^2}(1-e^{-\frac{1}{2}|k_1|^2s})^2|m(\eps k_1)|^2|m(\eps k_2)|^2
$$\normalsize
Splitting this sum according to the following decomposition $(1-e^{-|k_1^2t})^2=1+e^{-|k_1|^2t}(2+e^{-|k_1|^2t})$ we are lead to the following terms  
\small\begin{align*}
\sum_{k_1,k_2,k_{12}\ne0}&\frac{(k_2\cdot k_{12})(k_1\cdot k_2)}{|k_1|^4|k_2|^4|k_{12}|^2}|m(\eps k_1)|^2|m(\eps k_2)|^2,\\
\sum_{k_1,k_2,k_{12}\ne0}&\frac{(k_2\cdot k_{12})(k_1\cdot k_2)}{|k_1|^4|k_2|^4|k_{12}|^2}e^{-\frac{1}{2}|k_1|^2t}(2+e^{-\frac{1}{2}|k_1|^2t})|m(\eps k_1)|^2|m(\eps k_2)|^2\,.
\end{align*}\normalsize
For the first, notice that
\small\begin{multline*}
\sum_{k_1,k_2,k_{12}\ne0}\frac{(k_2\cdot k_{12})(k_1\cdot k_2)}{|k_1|^4|k_2|^4|k_{12}|^2}|m(\eps k_1)|^2|m(\eps k_2)|^2\\
=\sum_{k_1,k_2,k_{12}\ne0}\frac{k_1\cdot k_2}{|k_1|^4|k_2|^4}|m(\eps k_1)|^2|m(\eps k_2)|^2\\
-\sum_{k_1,k_2,k_{12}\ne0}\frac{(k_1\cdot k_{12})(k_1\cdot k_2)}{|k_1|^4|k_2|^4|k_{12}|^2}|m(\eps k_1)|^2|m(\eps k_2)|^2
\end{multline*}\normalsize
and thus 
\small\begin{multline*}
\sum_{k_1,k_2,k_{12}\ne0}\frac{(k_2\cdot k_{12})(k_1\cdot k_2)}{|k_1|^4|k_2|^4|k_{12}|^2}|m(\eps k_1)|^2|m(\eps k_2)|^2\\
=\frac{1}{2}\sum_{k_1,k_2,k_{12}\ne0}\frac{k_1\cdot k_2}{|k_1|^4|k_2|^4}|m(\eps k_1)|^2|m(\eps k_2)|^2=\frac{1}{2}\sum_{k_1\ne0}|k_1|^{-6}|m(\eps k_1)|^4
\end{multline*}\normalsize
where we have used that the function $m$ is even, and by dominated convergence theorem we conclude that the right hand side converges to $\sum_{k_1\ne0}|k_1|^{-6}<+\infty$ as $\eps$ goes to zero. The second term instead
\small\begin{multline*}
\sum_{k_1,k_2,k_{12}\ne0}\frac{(k_2\cdot k_{12})(k_1\cdot k_2)}{|k_1|^4|k_2|^4|k_{12}|^2}e^{-\frac{1}{2}|k_1|^2t}(2+e^{-\frac{1}{2}|k_1|^2t})|f(\eps k_1)|^2|f(\eps k_2)|^2\\
=\sum_{k_1,k_2,k_{12}\ne0}\frac{k_1\cdot k_2}{|k_1|^4|k_2|^4}|f(\eps k_1)|^2|f(\eps k_2)|^2e^{-\frac{1}{2}|k_1|^2t}(2+e^{-\frac{1}{2}|k_1|^2t})\\
-\sum_{k_1,k_2,k_{12}\ne0}\frac{(k_1\cdot k_{12})(k_1\cdot k_2)}{|k_1|^4|k_2|^4|k_{12}|^2}e^{-\frac{1}{2}|k_1|^2t}(2+e^{-\frac{1}{2}|k_1|^2t})|f(\eps k_1)|^2|f(\eps k_2)|^2
\end{multline*}\normalsize
Since $m$ is even, the first sum of this decomposition is finite. To study the second one we simply use  the following elementary estimate 
\small\begin{multline*}
\sum_{k_1,k_2,k_{12}\ne0}\frac{(k_1\cdot k_{12})(k_1\cdot k_2)}{|k_1|^4|k_2|^4|k_{12}|^2}e^{-\frac{1}{2}|k_1|^2t}(2+e^{-\frac{1}{2}|k_1|^2t})|f(\eps k_1)|^2|f(\eps k_2)|^2\\
\lesssim t^{\nu/2-1}\sum_{k_1,k_2}\frac{1}{|k_1|^{4-\nu}|k_2|^3|k_{12}|}<+\infty
\end{multline*}\normalsize
and this concludes the bound for this term. To obtain the needed bound for the expectation $E[\nabla X(t)\nabla X^{\<tree122>}(t)]$ it remains to study
\small\begin{multline}\label{comp1}
\sum_{k_1} \frac{1-e^{-\frac{1}{2}|k_1|^2t}}{|k_1|^2}\int_0^t\dd s\int_0^s \dd \sigma e^{-\frac{1}{2}|k_1|^2(t-s)}e^{-\frac{1}{2}|k_1|^2\sigma} k_1\cdot \\
\cdot\left(\sum_{k_2} \frac{1}{|k_2|^4}e^{-|k_{12}|^2(s-\sigma)} (k_{12}\cdot k_2) k_2\right)
\end{multline}\normalsize
Let us have a closer look at the quantity in the parenthesis. Notice that by symmetry ($k_2\to -k_2$) the sum in the parenthesis at the right hand side is $0$. Therefore, it can be written as
\small\begin{multline*}
\sum_{k_2} \frac{1}{|k_2|^4}\left(e^{-\frac{1}{2}|k_{12}|^2(s-\sigma)}(k_{12}\cdot k_2)-e^{-\frac{1}{2}|k_2|^2(s-\sigma)}(k_{2}\cdot k_2)\right)  k_2\\
=\sum_{k_2}\frac{1}{|k_2|^4}\int_0^1 \dd \tau e^{-\frac{1}{2}|\tau k_1+k_2|^2(s-\sigma)}\times\\
\times \left( k_1\cdot k_2 -2(k_1\cdot (\tau k_1+k_2))((\tau k_1+k_2)\cdot k_2)(s-\sigma)\right) k_2
\end{multline*}\normalsize
where in the last line we applied Taylor's theorem to the function $G(k)=e^{-\frac{1}{2}|x+k_2|^2(s-\sigma)}(k+k_2)\cdot k_2$. The modulus of the sum in~\eqref{comp1} can consequently be bounded by
\small\begin{multline}\label{comp2}
\sum_{k_1} \frac{1}{|k_1|}\int_0^1\sum_{k_2} \frac{1}{|k_2|^3}\int_0^t\dd s\int_0^s \dd \sigma e^{-\frac{1}{2}|k_1|^2(t-s)}e^{-\frac{1}{2}|k_1|^2\sigma} e^{-\frac{1}{2}|\tau k_1+k_2|^2(s-\sigma)}\times\\
\times |k_1||k_2|\left( 1+|\tau k_1+k_2|^2(s-\sigma)\right)
=\sum_{k_1} \frac{1}{|k_1|}\int_0^t\dd s\int_0^s \dd \sigma e^{-\frac{1}{2}|k_1|^2(t-s)}e^{-\frac{1}{2}|k_1|^2\sigma}\times\\
 \int_0^1\sum_{k_2} \frac{1}{|k_2|^2}e^{-\frac{1}{2}|\tau k_1+k_2|^2(s-\sigma)} |k_1|\left( 1+|\tau k_1+k_2|^2(s-\sigma)\right)
\end{multline}\normalsize
Let us write the right hand side as the sum of two terms and call them $\Sigma_1$ and $\Sigma_2$ respectively. 
Now, for $\Sigma_1$, notice that for $\eps>0$ sufficiently small, one has
\small\begin{multline*}
\sum_{k_2} \frac{1}{|k_2|^2}e^{-\frac{1}{2}|\tau k_1+k_2|^2(s-\sigma)} |k_1|\lesssim |k_1|\sum_{k_2}\frac{1}{|k_2|^2|\tau k_1+k_2|^{2-2\eps}(s-\sigma)^{1-\eps}}\\
\lesssim\frac{|k_1|}{(s-\sigma)^{1-\eps}} \int \frac{\dd y}{|y|^2|\tau k_1+y|^{2-2\eps}}\lesssim
\frac{|k_1|^\eps}{\tau^{1-\eps}(s-\sigma)^{1-\eps}}\int \frac{\dd y}{|y|^2|\frac{k_1}{|k_1|}+y|^{2-2\eps}}
\end{multline*}\normalsize
where the integral is taken over a suitable subset of $\R^3$ where the integrand is well-defined. It is immediate to see that the latter is bounded (for example by Cauchy-Schwartz). 

Analogously, for $\Sigma_2$, upon setting $\bar{\eps}\eqdef\frac{\eps}{2}>0$, we get
\small\[
\sum_{k_2} \frac{1}{|k_2|^2} e^{-\frac{1}{2}|\tau k_1+k_2|^2(s-\sigma)} |k_1||\tau k_1+k_2|^2(s-\sigma)\lesssim |k_1| \sum_{k_2}\frac{1}{|k_2|^2|\tau k_1+k_2|^{2-\eps}(s-\sigma)^{1-\eps}}
\]\normalsize
and the latter can be treated as before. 

At this point, given $\delta$, $\gamma>0$,~\eqref{comp2} is bounded by
\small\begin{align*}
\sum_{k_1}\frac{1}{|k_1|}\int_0^t\dd s&\int_0^s \dd \sigma e^{-\frac{1}{2}|k_1|^2(t-s)}e^{-\frac{1}{2}|k_1|^2\sigma} \int_0^1\frac{|k_1|^\eps}{\tau^{1-\eps}(s-\sigma)^{1-\eps}}\\
&\lesssim \sum_{k_1}\frac{1}{|k_1|^{1-\eps}}\int_0^t\dd s \frac{1}{|k_1|^{2-2\gamma}(t-s)^{1-\gamma}}\int_0^s \dd \sigma \frac{1}{|k_1|^{2-2\delta}\sigma^{1-\delta}(s-\sigma)^{1-\eps}}\\
&\lesssim \sum_{k_1}\frac{1}{|k_1|^{5-2\gamma-2\delta-\eps}}\int_0^t \frac{1}{s^{1-\delta-\eps}(t-s)^{\delta}}\lesssim \sum_{k_1} \frac{t^{-1+\gamma+\delta+\eps}}{|k_1|^{5-2\gamma-2\delta-\eps}}
\end{align*}\normalsize
and the last term is bounded provided that $5-2\gamma-2\delta-\eps>3$ and $-1+\gamma+\delta+\eps>0$. Therefore $\sup_{t\in[0,T]}t^{3/2\nu-1}|\mathbb E[\nabla X^\eps(t)\nabla X^{\eps,\<tree122>}(t)]|$ is convergent and by dominated convergence we can conclude that the constant $c_\eps^{\<tree1222>}$ does not diverge and can therefore be omitted. 

We can now focus on $c_\eps^{\<tree124>}(t)$. In particular we would like to show that $c_\eps^{\<tree124>}(t) = c_\eps^{\<tree124>}t +R^\eps(t)$, where $c_\eps^{\<tree124>}$ is a diverging constant and $R^\eps(t)$ is finite uniformly in $\eps$. Applying Wick's theorem as we did for $X^{\<tree1222>}$ and taking into account the symmetry of the sum in both $k_1,k_2$ and $k_3,k_4$, we obtain
\small\begin{align*}
c_\eps^{\<tree124>}(t)=2\sum_{k_1,k_2}\prod_{h=1}^2 m(\eps k_h)^2\int_0^t  \left(I_1(s)^2+2 I_1(s)I_2(s) + I_2(s)^2\right) \dd s\frac{|k_{12}|^2|k_1\cdot k_2|^2}{|k_1|^4|k_2|^4}\,.
\end{align*}\normalsize
where $I_1$ and $I_2$ are given by 
\small\begin{align*}
I_1(s):&=\frac{1- e^{-\frac{1}{2}|k_{12}|^2s}}{|k_{12}|^2}& & I_2(s)\eqdef\sum_{i=1}^3\int_0^s e^{-\frac{1}{2}|k_{12}|^2(s-r)} e^{-\frac{1}{2}a_i r}\dd r
\end{align*}\normalsize
and $a_1=|k_1|^2$, $a_2=|k_2|^2$ and $a_3=|k_1|^2+|k_2|^2$. Let us begin with the term involving $I_1^2$, which, by expanding the square, equals
\small\begin{align*}
\frac{1}{2}c_\eps^{\<tree124>}t+ \sum_{k_1,k_2}\prod_{h=1}^2 m(\eps k_h)^2\left(-2\frac{1-e^{-|k_{12}|^2t}}{|k_{12}|^6}+\frac{1-e^{-2|k_{12}|^2t}}{2|k_{12}|^6}\right)\frac{|k_{12}|^2|k_1\cdot k_2|^2}{|k_1|^4|k_2|^4}
\end{align*}\normalsize
where the constant $c_\eps^{\<tree124>}$ is defined as
\[
c_\eps^{\<tree124>}=2\sum_{k_1,k_2}m(\eps k_1)^2m(\eps k_2)^2\frac{|k_1\cdot k_2|^2}{|k_{12}|^2|k_1|^4|k_2|^4}
\]
For the other two summands notice that, by applying the same strategy as above, we have
\small\begin{align}
\sum_{k_1,k_2}\prod_{h=1}^2 m(\eps k_h)^2&\frac{|1-e^{-|k_{12}|^2t}|}{|k_{12}|^6}\frac{|k_{12}|^2|k_1\cdot k_2|^2}{|k_1|^4|k_2|^4}\lesssim \sum_{k_1,k_2}\frac{1}{|k_{12}|^4|k_1|^2|k_2|^2}\label{computations}\\
&\lesssim \sum_{k_2}\frac{1}{|k_2|^{5-\eps}}\sum_{k_1}\frac{1}{|k_1|^{3+\eps}}+\sum_{k_2}\frac{1}{|k_2|^{4}}\sum_{k_1}\frac{1}{|k_{12}|^{4}}\notag
\end{align}\normalsize
which converges for any $\eps>0$ small enough.

For the other terms, notice at first that if $\delta_1$, $\delta_2>0$, we have
\small\begin{align*}
\sum_{i=1}^3\int_0^s e^{-|k_{12}|^2(s-r)} e^{-a_i r}\dd r\lesssim \sum_i\int_0^s \frac{(s-r)^{-1+\delta_1}r^{-1+\delta_2}}{|k_{12}|^{2-2\delta_1}a_i^{1-1\delta_2}}\dd r \lesssim\max_i \frac{s^{-1+\delta_1+\delta_2}}{|k_{12}|^{2-2\delta_1}a_i^{1-\delta_2}}
\end{align*}\normalsize
It will be enough to consider $a_1=|k_1|^2$, since for $a_2$ the same bounds hold and $a_3>a_1^2$. 
Upon choosing $\delta_1+\delta_2>\frac{1}{2}$, we have
\small\begin{align*}
&\sum_{k_1,k_2}\prod_{h=1}^2 m(\eps k_h)^2\left|\int_0^t I_2(s)^2 \dd s\right|\frac{|k_{12}|^2|k_1\cdot k_2|^2}{|k_1|^4|k_2|^4}\lesssim \sum_{k_1,k_2} \frac{1}{|k_{12}|^{2-4\delta_1}|k_1|^{6-4\delta_2}|k_2|^2}\\
&\lesssim \sum_{k_1}\frac{1}{|k_1|^{6-4\delta_2}}\left(\sum_{|k_2|\leq|k_{12}|}+\sum_{|k_{12}|\leq|k_2|}\right)\frac{1}{|k_{12}|^{2-4\delta_1}|k_2|^2}\lesssim \sum_{k_1}\frac{1}{|k_1|^{6-4\delta_2}}\sum_{k_2}\frac{1}{|k_2|^{4-4\delta_1}}
\end{align*}\normalsize
which converges, provided that $\delta_2<\frac{3}{4}$ and $\delta_{1}<\frac{1}{4}$. 
Let us consider $I_1I_2$. In this case
\small\begin{multline*}
\sum_{k_1,k_2}\prod_{h=1}^2 m(\eps k_h)^2\left|\int_0^t I_1(s)I_2(s) \dd s\right|\frac{|k_{12}|^2|k_1\cdot k_2|^2}{|k_1|^4|k_2|^4}\\\lesssim \sum_{k_1,k_2}\frac{1}{|k_{12}|^{2-2\delta_1}|k_1|^{4-2\delta_2}|k_2|^2}
\lesssim \sum_{k_1}\frac{1}{|k_1|^{4-2\delta_2}}\sum_{k_2}\frac{1}{|k_2|^{4-2\delta_1}}
\end{multline*}\normalsize
and the last converges provided that $\delta_1$, $\delta_2<\frac{1}{2}$. Then we conclude that the divergent part of the term $X^{\eps\<tree124>}$ is simply given by $t c^{\<tree124>}$
and the proof of Theorem~\ref{th:stoc} is completed. 
\newline

To conclude this section, we want to analyze the constants $c^{\<tree12>}_\eps$ and $c_\eps^{\<tree124>}$, and understand their asymptotic behavior. 
Now, by Riemann-sum approximation, it easy to see that, as $\eps$ goes to $0$,
$$
c^{\<tree12>}_\eps=\sum_{k\in\mathbb Z^3,k\ne0}\frac{|m(\eps k)|^2}{|k|^2}\sim_{\eps\to0}\eps^{-1}\int_{|x|>1}|x|^{-2}|m(x)|^2
$$
where the integral is clearly finite. For the other, by elementary estimates, we have  
\small\begin{equation*}
c_\eps^{\<tree124>}
\lesssim \sum_{|k_1|,|k_2|\lesssim \eps^{-1}}|k_1|^{-2}|k_2|^{-2}|k_{12}|^{-2}\lesssim \sum_{|k_1|,|k_2|\lesssim \eps^{-1}}|k_1|^{-3}|k_2|^{-3}\lesssim (\log(\eps))^2
\end{equation*}\normalsize
where the latter follows once again by Riemann-sum approximation. 

\section{The Polymer Measure with White Noise Potential and its properties}\label{sec:PropPol}

This section is devoted to the study of the Polymer Measure with white noise potential and its properties. Thanks to the result of the previous section, we can now state the following theorem whose proof will occupy the rest of the paper (compare with Theorems~\ref{t:ConstrPol} and~\ref{thm:PropPol}). 

\begin{theorem}\label{th:polymer}
Let $T>0$ and $\xi$ be spatial white-noise on the $d$-dimensional torus $\TT^d$ for $d=2,\,3$ and $\xi^\eps$ be given by 
\[
\xi^\eps=\sum_{k\in\mathbb Z^d}m(\eps k)\hat \xi(k)e_k
\]
where $\{\hat \xi(k)\}_{k\in\Z^d}$ is a family of standard normal random variables with covariance $\E[\hat \xi(k_1)\hat \xi(k_2)]=\1_{\{k_1=-k_2\}}$, $e_k$ is the Fourier basis $L^2(\mathbb T^d)$ and $m$ a smooth radial function with compact support such that $m(0)=1$.
For any $\eps>0$, define the probability measure $\Q^\eps_{T,x}$ on $C([0,T],\R^d)$ as
$$
\mathbb Q^\eps_{T,x}(\dd\omega)=Z^{-1}_\eps\exp\left(\int_0^T\xi^\eps(B_s)\dd s\right)\mathbb W_x(\dd\omega),\,\, Z^\eps\eqdef\mathbb E_{\mathbb W_x}\left[\exp{\int_0^T\xi^\eps(B_s)\dd s}\right]
$$
with $\mathbb W_x$ is the Wiener measure starting at $x$ (the white noise being independent of $\mathbb W$). 
Then there exists $T^\star>0$, depending only on $\xi$, such that for all $T\leq T^\star$ the family of probability measures $\mathbb Q^\eps_T$ converges to a measure $\mathbb Q_T$ independent of the choice of the mollifier ($\xi$-almost surely). 

Moreover $\mathbb Q_T$ is singular to the Wiener measure and we can choose $T^\star=\infty$ .
\end{theorem} 

\begin{remark}
Unfortunately, we are not allowed to consider a spatial white-noise on the full space $\R^d$, the reason being that such a noise does not live in any Besov-H\"older space $\CC^\beta(\R^d,\R)$. 
However, we believe that the problem can be handled by introducing some sort of weighted Besov-H\"older spaces and, in this direction, we mention the works of Hairer and Labb\'e~\cite{HL1,HL2}, where the authors prove a well-posedness result for the linear parabolic Anderson equation on $\R^d$, $d=2$ and $3$, and the recent paper of Mourrat and Weber~\cite{MW15}, in which they obtain an analogous result for the $\Phi_2^4$-equation. 
\end{remark}

\begin{remark}
The factor $1$ in front of the white-noise $\xi$ does not play any role in our study and can be replaced by any constant $\beta>0$. By Section~\ref{section:KPZ} and the analysis carried out therein, we guess that the behavior of the polymer measure as $\beta\to0$ is crucially related to that of the KPZ-type equation~\eqref{eq:KPZtype} with vanishing noise. In this direction, large deviation results have been recently investigated in the context of singular SPDEs, more specifically for the case of the stochastic quantization equation, by M. Hairer and H. Weber in~\cite{HW}.  
\end{remark}

We now begin with the proof of Proposition~\ref{prop:conv-kpz}, which represents the core of the existence part of the previous statement.

\subsection{Proof of Proposition~\ref{prop:conv-kpz}}

Thanks to Theorems~\ref{th:flow-kpz} and~\ref{th:stoc}, we know that there exists $T^\star>0$ such that for all $T\leq T^\star$ and $\delta>0$, $h^\eps(T-t,x)$ converges to $h(T-t,x)$  in $C_T \CC^{a(d)-\delta}$ in  probability, where $a(d)=1$ for $d=2$ and $a(d)=1/2$ for $d=3$. 
Hence, $V^\eps(t,x)\eqdef\nabla h^\eps(T-t,x)$ converges to $V(t,x)=\nabla h(T-t,x)$ in $C_T\CC^{a(d)-1-\delta}$. For $d=2$ the proof is completed. 

In the three dimensional case, recall that the solution $h$ to~\eqref{eq:KPZtype} admits the following decomposition
\[
h(t)= X_t + h^1(t)\qquad \text{where}\qquad h^1(t)=X^{\<tree12>}_t + 2X^{\<tree122>}_t + v(t).
\]
In the previous section we proved that $X$ and  $h^1$ are, respectively, almost $\frac{1}{2}$ and $1$, regular in space, hence, in $\CJ^T(\nabla\cdot\nabla h)\circ\nabla h$, given by
\small\[
\CJ^T(\nabla\cdot\nabla X)\circ\nabla X+\CJ^T(\nabla\cdot\nabla X)\circ\nabla h^1+\CJ^T(\nabla\cdot\nabla h^1)\circ\nabla X+\CJ^T(\nabla\cdot\nabla h^1)\circ\nabla h^1\,,
\]\normalsize
the only term not analitytically well defined is the first. At the same time, since $\CJ^T$ and $\nabla$ commute we have
\[
\CJ^T(\nabla\cdot\nabla X)\circ\nabla X = \nabla \CJ^T(\nabla X)\circ\nabla X
\]
Now, as we already pointed out the estimates for $\CJ_T$ and $\I$ are the same and $Q=\I(\nabla X)$, therefore the bounds for this process are the same as the ones for $\nabla Q\circ\nabla X$ which were derived in the proof of Theorem~\ref{th:stoc}, and the proof is completed. 

\subsection{Global existence, Parabolic Anderson equation and Feynman-Kac representation}
\label{section:pam}

In this section we want to show the global existence of the polymer measure. 
By the construction carried out in Section~\ref{sec:Polymer}, the result immediately follows if we are able to prove that the solution to the KPZ-type equation~\eqref{eq:KPZtype1} does not explode in finite time.  
So, let once again $h^\eps$ satisfy
\[
\partial_t h^\eps=\frac{1}{2}\Delta h^\eps+\frac{1}{2}|\nabla h^\eps|^2+\xi^\eps-c_\eps,\qquad\quad h(0,x)=0.
\]
Now, we know that there exists a time $T^\star>0$ such that $h^\eps$ converges to a function $h$ in $C_T \CC^{a(d)-\delta}$ in  probability, where $a(d)=1$ for $d=2$ and $a(d)=1/2$ for $d=3$, independently of the mollifier, for all $T\leq T^\star$. 
Moreover in~\cite{GIP15} and~\cite{HL2}, the authors proved global well-posedness for the parabolic Anderson equation in dimension $d=2$ and $3$, respectively, namely they showed that there exists a constant $b_\eps$ for which if $v^\eps$ denotes the solution of  
\begin{equation}\label{eq:pam}
\partial_t v^\eps=\frac{1}{2}\Delta v^\eps+v^\eps\xi^\eps-b_\eps v^\eps,\,\qquad v^\eps(0,x)=1
\end{equation} 
then for all $T>0$, $v^\eps$ converges in $C_T \CC^{a(d)-\delta}$ (in  probability) to a function $v$ independently of the approximation of the noise, and the constant $b_\eps$ can be chosen to be $c_\eps$. 
We need to take into account the following two facts
\begin{enumerate}
\item By Feynman-Kac formula, $v^\eps$ can be written as 
$$
v^\eps(t,x)=\mathbb E_x\left[e^{\int_0^t(\xi^\eps(B_s)-c_\eps)\dd s}\right]
$$
\item The Cole-Hopf transform of $h_\eps$ solves~\eqref{eq:pam}, i.e. $v^\eps(t,x)=e^{h^\eps(t,x)}$ for all $t\leq T^\star$ and $x\in\mathbb T^2$. Therefore, taking the limit as $\eps$ tends to $0$ we have $v(t,x)=e^{h(t,x)}>0$ for $t\leq T^\star$.
\end{enumerate}
Now, the point is that for all $t\leq T^\star$, the Markov property implies
\small$$
v^\eps(t+T^\star,x)=\mathbb E_{\mathbb W_x}\left[e^{\int_0^{T^\star}(\xi^\eps(B_s)-c_\eps)\dd s}v^\eps(t,B_{T^\star})\right]=v^\eps(T^\star,x)\mathbb E_{\mathbb Q^\eps_{x,T^\star}}[v^\eps(t,B_{T^\star})]
$$\normalsize
thus, since $\mathbb Q^\eps_{x,T}$ converges weakly to a probability measure $\mathbb Q_{x,T^\star}$ and $\|v^\eps-v\|_{C_TL^\infty}\to0$ we get immediately, by taking the limit as $\eps\to0$, that 
$$
v(T^\star+t,x)=v(T^\star,x)\mathbb E_{\mathbb Q_{x,T^\star}}[v(t,B_{T^\star})]>0
$$  
for all $t\leq T^\star$ which implies that $v(t,x)>0$ for all $t\leq 2T^\star$ and $x\in\mathbb T^2$. Iterating the same argument, we get that $v(t,x)>0$ for all $t\geq0$ and all $x$. At this point, it is not difficult to see that we can extend  $h^\eps$ and $h$ to the whole half-line $[0,+\infty)$ by setting $h^\eps(t)=\log(v^\eps(t))$ and $h(t)=\log(v(t))$ for all $t\geq0$. Therefore, it is immediate to verify  that the $h$ constructed in this way is a global in time solution of the equation~\eqref{eq:KPZtype1}. 


\subsection{Singularity with respect to the Wiener measure}

We will give the proof in the case of dimension $3$ since an analogous, but simpler argument, holds for $d=2$.

In order to verify that the Polymer measure $\mathbb Q_{T,x}$ is singular with respect to the Wiener measure $\mathbb W_x$, we will begin by showing that the sequence of densities $\dd\mathbb Q_{T,x}^N/\dd\mathbb W$ (we are taking $\eps$ to be $1/N$) admits a subsequence converging to 0 $\mathbb W$-a.s. 
At first notice that, by Feynman-Kac formula, for all $T\leq T^\star$ we have
$$
e^{h_N(T,x)}=e^{-c_N T}\mathbb E_{\mathbb W_x}\left[e^{\int_0^T\xi^N(\omega_s)\dd s}\right]=Z_N^Te^{-c_N T}
$$
where $h_N$ is the solution to~\eqref{eq:KPZtype} driven by the mollified noise $\xi^N$, $c_N\eqdef c_N^{\<tree12>}+c_N^{\<tree124>}$  and $c_N^{\<tree12>},\,c_N^{\<tree124>}$ are the diverging constants introduced in Theorem~\ref{th:stoc}. 
Now, let $Y_N$ be the random variable given by 
\[
Y_N(\omega)\eqdef(Z_N^T)^{-1} e^{\int_0^T\xi^N(\omega_s)\dd s}
\]
and $\der<1$ be fixed. Thanks to the last part of the above mentioned theorem, we immediately see that 
\small\begin{equation}\label{e:DensityMoments}
\mathbb E_{\mathbb W}[(Y_N)^\der]=(Z_N^T)^{-\der}\mathbb E_{\mathbb W}\left[e^{\int_0^T\der\xi^N(\omega_s)\dd s}\right]=e^{(-c^{\<tree12>}_N\der(1-\der)+\der^4c^{\<tree124>}_N)T}e^{h_\der^N(T,x)-h^N(T,x)}
\end{equation}\normalsize
where, this time, $h_\der^N$ is the solution to the KPZ-type equation~\eqref{eq:KPZtype} driven by $\der\xi^N$ and $\xi^N$ is the same as before. 
Since, by Remark~\ref{rem:Constants} as $N\to\infty$, $c^{\<tree12>}_N\sim N$ while $c^{\<tree124>}_N=O((\log N)^2)$, we have
\[
\lim_{N\to+\infty}c^{\<tree12>}_N\der(1-\der)-(\der^4-\der)c^{\<tree124>}_N=\lim_N c^{\<tree12>}_N\der(1-\der)=+\infty
\]
it follows that there exists a subsequence (denoted once again as the argument of the limit at the left hand side of the previous) for which the the first exponential at right hand side of~\eqref{e:DensityMoments} is summable. 
Moreover, thanks to Theorem~\ref{th:flow-kpz} the second exponential converges, as $N\to\infty$ to $\exp{\big(h_\der(T,x)-h(T,x)\big)}$. Therefore
$$
\sum_{N}\mathbb E_{\mathbb W}\left[(Y^N)^\der\right]\lesssim\sum_{N}e^{(-c^{\<tree12>}_N\der(1-\der)+(\der^4-\der)c^{\<tree124>}_N)T}<+\infty
$$
which in particular implies that, if $A_N^r=\left\{Y_N<r\right\}$, then $\mathbb W(\limsup_{N} A_N^r)=1$. 

The point is to prove that instead $\mathbb Q(\limsup_{N} A_N^r)=0$. By Portemanteau theorem we have  
$$
\mathbb Q(A_N^r)\leq\liminf_{L\to+\infty}\mathbb Q^L(A_N^r)
$$ 
so we need to suitably bound $\mathbb Q^L(A_N^r)$. Let us notice that 
$$
\mathbb Q^L(A_N^r)=\mathbb E_{W}\left[Y_L1_{A_N^r}\right]\lesssim r^\delta (Z_L)^{-1}(Z_N)^\der\mathbb E_{\mathbb W}\left[e^{\int_0^T\xi^L(\omega_s)-\der\xi^N(\omega_s)\dd s}\right]\,.
$$
Denoting by $c_L$ the diverging constant associated to $h^L$ and using once again the inequality $(Z_L)^{-1}(Z_N)^\der\lesssim e^{T(\der c_N-c_L)}$, we get 
$$
\mathbb Q^L(A_N^r)\lesssim e^{T(\der c_N-c_L)}\mathbb E_{\mathbb W}\left[e^{\int_0^T\xi^L(\omega_s)-\der\xi^N(\omega_s)\dd s}\right]
$$
Let $h_\der^{N,L}$ be the solution of the equation
$$
\partial_t h_{\der}^{N,L}=\frac{1}{2}\Delta h_{\der}^{N,L}+\frac{1}{2}|\nabla h_\der^{N,L}|^2+\xi^L-\der\xi^N-(c^{\<tree12>}_{N,L}+c_{N,L}^{\<tree124>}),\quad h_\der^{N,L}(0,x)=0.
$$ 
and we apply again Feynmann-Kac formula, so that
$$
\mathbb E_{\mathbb W}\left[e^{\int_0^T\xi^L(\omega_s)-\der\xi^N(\omega_s)\dd s}\right]=e^{h_{\der}^{N,L}(T,x)+T(c^{\<tree12>}_{N,L}+c_{N,L}^{\<tree124>})}\,.
$$
Now we claim that, if we take the constants $c^{\<tree12>}_{N,L}$ and $c^{\<tree124>}_{N,L}$ as 
\begin{equation}\label{eq:constant}
c^{\<tree12>}_{N,L}=c^{\<tree12>}_L+(\der^2-2\der)c^{\<tree12>}_N,\,\quad\text{and}\quad c_{N,L}^{\<tree124>}=c_L^{\<tree124>}+\der(\der^3-4\der^2+5\der^2-4)c_N^{\<tree124>}
\end{equation}
then there exists a function $h$ such that, for all $T\leq T^\star$
\begin{equation}\label{eq:mixed}
\lim_{N\to\infty}\lim_{L\to\infty}h^{N,L}_{\der}(T,x)=h(T,x)\,.
\end{equation}
Assuming~\eqref{eq:mixed} holds true we are done. Indeed,
\small\begin{align*}
\mathbb Q(A_N^r)&\lesssim\liminf\mathbb Q^L(A_N^r)\\
&\lesssim e^{(-c_N^{\<tree12>}\der(1-\der)+\der(\der^3-4\der^2+5\der^2-3)c_N^{\<tree124>})T}e^{h(T,x)}\lesssim e^{-c_N^{\<tree12>}\der(1-\der)T/2}
\end{align*}\normalsize
is valid for $N$ large enough, where the last passages are due to the asymptotic behaviour of the two constants. 
At this point, Borel-Cantelli lemma guarantess that $\mathbb Q(\limsup_{N} A_N^r)=0$, which in turn  implies that $\mathbb Q$ is singular with respect to the Wiener measure. 
\newline

\noindent The only missing ingredient is the proof of the claim~\eqref{eq:mixed}. Recall that 
\small$$
h^{N,L}=\mathcal S_{cKPZ}\left(\xi^L-\der\xi^N,c^{\<tree12>}_{N,L}+c_{N,L}^{\<tree124>}\right)=\mathcal S_{rKPZ}\left(\XX\left(\xi^L-\der\xi^N,c^{\<tree12>}_{N,L},c^{\<tree124>}_{N,L}\right)\right)
$$\normalsize
where the two maps $\mathcal S_{cKPZ}$ and $\mathcal S_{rKPZ}$ were introduced in Theorem~\ref{th:flow-kpz}. To ensure the convergence of $h^{N,L}$ it suffices to exploit the continuity of the map $\mathcal S_{rKPZ}$ and show that there exists a choice of $c^{\<tree12>}_{N,L}$, $c^{\<tree124>}_{N,L}$ for which the sequence $\XX\big(\xi^{L}-\der\xi^N,c^{\<tree12>}_{N,L},c^{\<tree124>}_{N,L}\big)$ converges in $\mathcal X^\varrho$. 

Now, the first two components of $\XX\big(\xi^{L}-\der\xi^N,c^{\<tree12>}_{N,L},c^{\<tree124>}_{N,L}\big)$ are given by
$$
X^{N,L}=\mathcal I(\xi^L-\der\xi^N),\quad X^{N,L,\<tree12>}=\mathcal I(|\nabla X^{N,L}|^2)
$$ 
and, expanding the product at the second term we get
$$
X^{N,L,\<tree12>}=X^{L,\<tree12>}+\der^2X^{N,\<tree12>}-2\der \mathcal I(\nabla X^L\nabla X^N)
$$
where we have set $X^{L,\<tree12>}=\mathcal I(|\nabla X^L|^2)$. 
Since the first two summands at the right hand side were already treated in the Theorem~\ref{th:stoc}, we will only focus one the last one. 
We can assume, without loss of generality, that $L>N+5$ (remember that we want to take the limit in $L$ before the one in $N$). Of course the only ill-defined part of this term in the limit is given by the resonant term $\mathcal I(\nabla X^L\circ\nabla X^N)$. Observe that 
$$
\mathcal I(\nabla X^L\circ\nabla X^N)=\mathcal I(\nabla X^{N+5}\circ\nabla X^N) 
$$
since $\mathscr F(X^L-X^{N+5})$ and $\mathscr F(X^N)$ have disjoint support. 
Moreover the same argument used in Theorem~\ref{th:stoc} allows to show that the limit of the sequence $\mathcal I(\nabla X^{N+5}\nabla X^N-c_N)$ exists in $C_{\kappa} \CC^{2\varrho}$ for all $\kappa>0$ and $\varrho<1/2$.  
Choosing  $c^{\<tree12>}_{N,L}=c_L^{\<tree12>}+(\der^2-2\der)c_N^{\<tree12>}$, the term $X^{N,L,\<tree12>}-\mathcal I(c^{\<tree12>}_{N,L})$ converges, as $L$ tends to $\infty$, to 
\small\begin{multline*}
X^{\<tree12>}+\der^2(X^{N,\<tree12>}-\mathcal I(c_N^{\<tree12>}))+2\der\mathcal I(\nabla (X-X^{N+5})\prec \nabla X^N\\
+\nabla (X-X^{N+5})\succ \nabla X^N+\nabla X^{N+5}\nabla X^N-c_N)
\end{multline*}\normalsize
in $C_T\CC^{2\varrho}$, and the latter converges in the $N\to\infty$-limit in the space $C_T\CC^{2\varrho}$. 

At this point we have proved the convergence of the first two terms of $\XX(\xi^{L}-\der\xi^N,c^{\<tree12>}_{N,L},c^{\<tree124>}_{N,L})$ and this is enough to conclude the proof in the two dimensional case thanks to Remark~\ref{rem:two}. 

By repeating essentially the same argument exploited in the proof of Theorem~\ref{th:stoc}, we see that there exists a constant $c^{\<tree124>}_{N,L}$ auch that $\XX(\xi^{L}-\der\xi^N,c^{\<tree12>}_{N,L},c^{\<tree124>}_{N,L})$ converges as $L$ goes to the infinity to some element $\XX^N\in\mathcal X^\rho$ for all $\rho<1/2$. 
For the same reason as before, we can take the limit in $N$. 
The conclusion now follows by the continuity of the map $\mathcal S_{rKPZ}$. Indeed, $h^{N,L}$ converges in the space $C_T\CC^\alpha$ and of course this in particular implies that 
$$
\lim_{N}\lim_{L}h^{N,L}(t,x)=h(t,x)
$$   
where $h=\mathcal S_{rKPZ}\big(\lim_N\lim_L\XX(\xi^{L}-\der\xi^N,c^{\<tree12>}_{N,L},c^{\<tree124>}_{N,L})\big)$ and the proof of the claim (and therefore of the theorem) is completed.

\begin{bibdiv}
 \begin{biblist}

\bib{AKQ14}{article}{
author={Alberts, T.}, 
author={Khanin, K.},
author={Quastel, J.},
title={The continuum directed random Polymer}
journal={J. Stat. Phys.},
volume={154},
number={1},
year={2014}, 
pages={119--141}
} 

\bib{BCD11}{book}{
author={H. Bahouri},
author={J-Y. Chemin},
author={R. Danchin},
title={Fourier analysis and nonlinear partial differential equations.},
 series={Grundlehren der Mathematischen Wissenschaften [Fundamental Principles of Mathematical Sciences].},
 volume={343},
 publisher={Springer},
 place={Heidelberg},
 date={2011}
 }


\bib{BC01}{article}{
author={Bass, R.},
author={Chen, Z. Q.},
title={Stochastic differential equations for Dirichlet processes.},
journal={Probab. Theory and Related Fields}, 
number={121},
year={2001},
pages={422--446}
}

\bib{bony}{article}{
author={J.-M. Bony}, 
title= {Calcul symbolique et propagation des singularit\'es pour les \'equations aux d\'eriv\'ees partielles non lin\'eaires}, 
journal={Ann. Sci. \'Ecole Norm. Sup.}, 
number={14}, 
year={1981},
pages={209--246},
}

\bib{CC15}{unpublished}{
author = {Cannizzaro, G.},
author = {Chouk, K.},
    title = {SDEs with Distributional Drift and Polymer Measure with White Noise Potential},
  journal = {ArXiv e-prints},
   eprint = {??},
 keywords = {Mathematics - Probability, Mathematical Physics},
   adsurl = {http://adsabs.harvard.edu/abs/2013arXiv1310.6869C},
  adsnote = {Provided by the SAO/NASA Astrophysics Data System}
}

\bib{CC13}{unpublished}{
author = {Catellier, R.},
author = {Chouk, K.},
    title = {Paracontrolled Distributions and the 3-dimensional Stochastic Quantization Equation. ArXiv 1310.6869, 2013.},
  journal = {ArXiv e-prints},
   eprint = {1310.6869},
 keywords = {Mathematics - Probability, Mathematical Physics},
   adsurl = {http://adsabs.harvard.edu/abs/2013arXiv1310.6869C},
  adsnote = {Provided by the SAO/NASA Astrophysics Data System}
}

\bib{FR}{unpublished}{
author={F. Delarue}
author={R. Diel}
title={Rough paths and 1d sde with a time dependent distributional drift. application to polymers}
journal={Arxiv e-print}
eprint={1402.3662v2}
}

\bib{EK}{book}{
   author={Ethier, S. N.},
   author={Kurtz, T. G.},
   title={Markov Processes, Characterization and Convergence},
   series={Wiley Series in Probability and Mathematical Statistics},
   volume={120},
   publisher={John Wiley \& Sons, Inc.},
   place={New York},
   date={1986},
}

\bib{FFF}{unpublished}{
author={F. Flandoli}
author={E. Issoglio} 
author={F. Russo}
title={Multidimensional stochastic differential equations with distributional drift}
journal={ArXiv e-prints}
eprint={1401.6010}
}

\bib{FRW03}{article}{
author={Flandoli, F.}, 
author={Russo, F.}, 
author={Wolf, J.},
title={Some SDEs with distributional drift. I. General calculus.},
journal={Osaka J. of Math.},  
number={40},
year={2003}, 
pages={493–-542}
}

\bib{FRW04}{article}{
author={Flandoli, F.}, 
author={Russo, F.}, 
author={Wolf, J.},
title={Some SDEs with distributional drift. II. Lyons-Zheng structure, It\^o’s formula and semimartingale characterization}
journal={Random Oper. Stochastic Equations},
number ={2},  
year={2004}, 
pages={145--184}
}

\bib{FH14}{book}{
    AUTHOR = {Friz, P. K. },
    AUTHOR = {Hairer, M.},
     TITLE = {A Course on Rough Paths: With an Introduction to Regularity Structures},
    SERIES = {Springer Universitext},
     PUBLISHER = {Springer},
     YEAR = {2014},
     PAGES = {252},
      ISBN = {978-3-319-08331-5},
}

\bib{FrizVictoir}{book}{
   author={Friz, P. K.},
   author={Victoir, N. B.},
   title={Multidimensional stochastic processes as rough paths},
   series={Cambridge Studies in Advanced Mathematics},
   volume={120},
   note={Theory and applications},
   publisher={Cambridge University Press},
   place={Cambridge},
   date={2010},
}

\bib{GRR}{article}{
author={A. M. Garsia}, 
author={E. Rodemich}, 
author={H. Rumsey},
title={A real variable lemma and the continuity of paths of some Gaussian processes}, 
journal={Indiana Univ. Math. J.},
volume={20}
year={1970},
number={6}, 
pages={565--578},
}

\bib{Gub04}{article}{
author={M.~Gubinelli},
title={Controlling rough paths},
journal={Journal of Functional Analysis},
volume={216}, 
number={1}, 
year={2004}, 
pages={86--140}
}

\bib{GP}{article}{
	title = {KPZ Reloaded },
	journal = {In Preparation},
	author = {Gubinelli, M.},
	author={Perkowski, N.},
journal={ArXiv e-prints}
eprint={1508.03877}
	year = {2015},
}

\bib{GIP15}{article}{
	title = {Paracontrolled distributions and singular PDEs},
	author = {Gubinelli, M.},
	author={Imkeller, P.},
	author={Perkowski, N.},
	journal={Forum of Mathematics, Pi},
volume={3}
year={2015},
number={6}, 
pages={1--75}
}

\bib{gip2}{unpublished}{
	title = {A Fourier approach to pathwise stochastic integration},
	journal = {{arXiv} preprint {arXiv:1410.4006}},
	author = {Gubinelli, M.},
	author={Imkeller, P.},
	author={Perkowski, N.},
	year = {2014},
}

\bib{hairer_solving_2013}{article}{
	title = {Solving the {KPZ} equation},
	volume = {178},
	issn = {0003-{486X}},
	url = {http://annals.math.princeton.edu/2013/178-2/p04},
	doi = {10.4007/annals.2013.178.2.4},
	number = {2},
	Journal= {Annals of Mathematics},
	author = {Hairer, M.},
	year = {2013},
	pages = {559--664}
}

\bib{hairer_theory_2013}{article}{	
year={2014},
issn={0020-9910},
journal={Inventiones mathematicae},
doi={10.1007/s00222-014-0505-4},
title={A theory of regularity structures},
url={http://dx.doi.org/10.1007/s00222-014-0505-4},
publisher={Springer Berlin Heidelberg},
keywords={60H15; 81S20; 82C28},
author={Hairer, M.},
pages={1-236},
language={English}
}

\bib{HL1}{unpublished}{
title={A simple construction of the continuum
parabolic Anderson model on $R^2$},
author = {Hairer, M.},
author = {Labb\'e, C.},
year={2015}
journal={{arXiv} preprint {arXiv:1501.00692}}
}

\bib{HL2}{unpublished}{
title={Multiplicative stochastic heat equations on the whole space},
author = {Hairer, M.},
author = {Labb\'e, C.},
year={2015}
journal={{arXiv} preprint {arXiv:1504.07162}}
}

\bib{HW}{unpublished}{
	title = {Large deviations for white-noise driven, nonlinear stochastic PDEs in two and three dimensions},
	journal = {Annales de la Faculté des Sciences de Toulouse},
	author = {Hairer, M.},
	author = {Weber, H.},
volume={24(6)},
number={1},
	year = {2015},
pages={55--92}
	}

\bib{KPZ}{article}{
	title = {Dynamic scaling of growing interfaces},
	volume = {56},
	doi = {10.4007/annals.2013.178.2.4},
	number = {9},
	Journal= {Phys. Rev. Lett.},
	author = {Kardar, M.},
	author = {Parisi, G.},
	author = {Zhang, Y.-C.},
	year = {1986},
	pages = {889--892}
}

 \bib{janson}{book}{,
	title = {Gaussian Hilbert Spaces},
	isbn = {9780521561280},
	language = {en},
	publisher = {Cambridge University Press},
	author = {Janson, S.},
	year = {1997},
}

\bib{KR05}{article}{
author={Krylov, N. V.},
author={R\"ockner, M.},
title={Strong solutions of stochastic equations with singular time dependent drift},
journal={Probab. Theory and Related Fields},
volume={131}, 
year={2005},
pages={154--196}
}  

\bib{L98}{article}{
    AUTHOR = {Lyons, Terry J.},
     TITLE = {Differential equations driven by rough signals},
   JOURNAL = {Rev. Mat. Iberoamericana},
  FJOURNAL = {Revista Matem\'atica Iberoamericana},
    VOLUME = {14},
      YEAR = {1998},
    NUMBER = {2},
     PAGES = {215--310},
}

\bib{MW15}{unpublished}{
author={Mourrat, J.-C.},
author={Weber, H.},
title={Global well-posedness of the dynamic $\Phi^4$ model in the plane}, 
year={2015}
journal={{arXiv} preprint {1501.06191v1}}
}

\end{biblist}
\end{bibdiv}

\end{document}